\numberwithin{equation}{section}
\renewcommand{\a}{\alpha}
\renewcommand{\b}{\beta}
\newcommand{\g}{\gamma}
\newcommand{\G}{\Gamma}
\renewcommand{\d}{\delta}
\newcommand{\D}{\Delta}
\newcommand{\h}{\chi}
\renewcommand{\o}{\omega}
\renewcommand{\O}{\Omega}
\renewcommand{\r}{\rho}
\renewcommand{\t}{\tau}
\renewcommand{\th}{\theta}
\newcommand{\e}{\varepsilon}
\newcommand{\f}{\varphi}
\newcommand{\F}{\Phi}
\newcommand{\x}{\xi}
\newcommand{\y}{\eta}
\newcommand{\p}{\psi}
\newcommand{\C}{{\Bbb C}}
\newcommand{\N}{{\Bbb N}}
\newcommand{\R}{{\Bbb R}}
\newcommand{\T}{{\mathbf T}}
\newcommand{\curl}{{\rm curl}\,}
\newcommand{\diver}{{\rm div}\,}
\newcommand{\inte}{{\rm int}\,}
\newcommand{\pd}{\partial}
\newcommand{\supp}{\operatorname{supp\,}}
\newcommand{\loc}{\operatorname{{loc}}}
\newtheorem{theorem}{Theorem}[section]
\newtheorem{proposition}[theorem]{Proposition}
\newtheorem{lemma}[theorem]{Lemma}
\theoremstyle{definition}
\newtheorem{definition}[theorem]{Definition}
\newtheorem{assumption}[theorem]{Assumption}
\theoremstyle{remark}
\newtheorem{remark}[theorem]{Remark}
\newtheorem{example}[theorem]{Example}
\newcommand{\dss}{{\mathbf{ds}}}
\newcommand{\na}{\nabla}
\begin{document}


\title[Two Dimensional Incompressible Ideal Flow]
      {Two Dimensional Incompressible Ideal Flow Around a Thin Obstacle Tending to a Curve}
\author[C. Lacave]{Christophe Lacave}
\address[C. Lacave]{Universit\'e de Lyon\\
Universit\'e Lyon1\\
CNRS, UMR 5208 Institut Camille Jordan\\
Batiment du Doyen Jean Braconnier\\
43, blvd du 11 novembre 1918\\
F - 69622 Villeurbanne Cedex\\
France}

\email{lacave@math.univ-lyon1.fr}

\date{\today}

\begin{abstract}

In this work we study the asymptotic behavior of solutions of the incompressible two-dimensional Euler equations in the exterior of a single smooth obstacle when the obstacle becomes very thin tending to a curve. We extend results by Iftimie, Lopes Filho and Nussenzveig Lopes, obtained in the context of an obstacle tending to a point, see [Comm. PDE, {\bf 28} (2003), 349-379].

\end{abstract}

\maketitle

\section{Introduction}

The purpose of this work is to study the influence of a thin  material
obstacle on the behavior of two-dimensional incompressible ideal
flows. More precisely, we consider a family of
obstacles $\O_\e$ which are smooth, bounded, open, connected, simply connected
subsets of the plane, contracting to a smooth curve $\G$ as $\e\to
0$. 
Given the geometry of the exterior domain $\R^2\setminus\O_\e$, a
velocity field (divergence free and tangent to the boundary) on this
domain is uniquely determined by the two following (independent)
quantities: vorticity and circulation of velocity on the boundary of
the obstacle. Throughout this paper we assume that initial vorticity 
$\o_0$ is independent of $\e$, smooth,
compactly supported outside the obstacles $\O_\e$ and that $\g$, the
circulation of the initial velocity on the boundary, is independent of
$\e$. From  the work of K. Kikuchi \cite{kiku}, we know that there
exists  $u^\e=u^\e(x,t)$ a unique global solution to the Euler
equation in the exterior domain $\R^2\setminus\O_\e$ associated to the
initial data described above. Our aim is to determine the limit of
$u^\e$ as $\e\to0$. As a consequence, we also obtain the existence of a solution of the
 Euler equations in the exterior of the curve $\G$.

The study of incompressible fluid flows in presence of small obstacles
was initiated by Iftimie, Lopes Filho and Nussenzveig Lopes
\cite{ift_lop,ift_lop_2}. The paper \cite{ift_lop} treats the same problem as
above but with obstacles that shrink homothetically to a point $P$, instead
of a curve. The case of Navier-Stokes is considered in
\cite{ift_lop_2}. In the inviscid case, these authors prove that if
the circulation $\g$ vanishes, then the
limit velocity verifies the Euler equation in $\R^2$ (with the same
initial vorticity). If the circulation is non-zero, then the limit
equation involves a new term that looks that a fixed Dirac mass in the
point $P$ of strength $\g$; the initial vorticity also acquires a
Dirac mass in $P$. In the case of Navier-Stokes, the limit equation is
always Navier-Stokes but the initial vorticity of the limit equation
still has an additional Dirac mass in $P$.

Here we will show that, in the inviscid case, the limit equation is
the Euler equation in $\R^2\setminus\Gamma$. The initial velocity for
the limit equation is a velocity field which is divergence free in
$\R^2$, tangent to $\G$ such that the curl computed in
$\R^2\setminus\G$ is $\o_0$ and the curl computed in
$\R^2$ is   $\o_0+g_{\o_0}\delta_\G$ where $g_{\o_0}$ is a density
given explicitly in terms of $\o_0$ and $\g$. Alternatively,
$g_{\o_0}$ is the jump of the tangential velocity across $\G$.

  More precisely, let $\F_\e$ be a
cut-off function in a small $\e$-neighborhood of the boundary (the
precise definition of $\F_\e$ is given in Subsection \ref{cutoff}) and
set $\o_\e=\curl u^\e$. Our main Theorem may be stated as follows:
\newpage 
\begin{theorem} \label{1.1}
There exists a subsequence $\e=\e_k\to 0$ such that
\begin{itemize}
\item[(a)] $\F^\e u^\e \to u$ strongly in $L^2_{\loc}(\R_+\times\R^2)$;
\item[(b)] $\F^\e \o^\e \to \o$ weak$*$ in $L^\infty(\R_+;L^4_{\loc}(\R^2))$;
\item[(c)] $u$ is related to $\o$ by means of relation \eqref{uomega} 
\item[(d)] $u$ and $\o$ are weak solutions of $\o_t+u.\na\o=0$ in $\R^2\times(0,\infty)$.
\end{itemize}
\end{theorem}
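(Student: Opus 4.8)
The plan is to follow the compactness and limit-passage strategy of Iftimie, Lopes Filho and Nussenzveig Lopes \cite{ift_lop}, adapted to a degenerating curve rather than a point. First I would collect $\e$-uniform a priori estimates. Since $\o^\e=\curl u^\e$ is transported by the divergence-free flow of $u^\e$, all of its $L^p$ norms are conserved, $\|\o^\e(\cdot,t)\|_{L^p(\R^2\setminus\O_\e)}=\|\o_0\|_{L^p}$ for every $p\in[1,\infty]$, uniformly in $\e$ and $t$; in particular $\F^\e\o^\e$ is bounded in $L^\infty(\R_+;L^4_{\loc}(\R^2))$. The velocity decomposes as $u^\e=K_\e[\o^\e]+\g\,H_\e$, where $K_\e$ is the Biot--Savart operator of $\R^2\setminus\O_\e$ and $H_\e$ is the decaying harmonic field of unit circulation around the obstacle. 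The field $H_\e$ concentrates on $\O_\e$ and fails to be bounded in $L^2$ near the shrinking obstacle, which is exactly why the cut-off $\F^\e$ is inserted: away from an $\e$-neighborhood of $\G$ I expect $\F^\e u^\e$ to be bounded in $L^\infty(\R_+;L^2_{\loc}(\R^2))$.

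These bounds yield, along a subsequence, $\F^\e\o^\e\rightharpoonup\o$ weak$*$ in $L^\infty(\R_+;L^4_{\loc})$, which is (b), together with $\F^\e u^\e\rightharpoonup u$ weakly in $L^2_{\loc}$. The crux is to promote the latter to the strong convergence of (a). For this I would exploit the div--curl structure of the cut-off velocity,
\[
\curl(\F^\e u^\e)=\F^\e\o^\e+u^\e\cdot\na^\perp\F^\e,\qquad \diver(\F^\e u^\e)=u^\e\cdot\na\F^\e,
\]
and show that both right-hand sides are bounded in $L^2_{\loc}$. The commutator terms live in the thin shell where $\na\F^\e$ is supported and are the delicate point: one must choose $\F^\e$ so that $\na\F^\e$ is controlled (a logarithmic cut-off keeps $\int|\na\F^\e|^2$ bounded) and combine this with the velocity bounds. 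Given these, the identity $\Delta v=\na(\diver v)+\na^\perp(\curl v)$ together with elliptic regularity bounds $\F^\e u^\e$ in $H^1_{\loc}$, hence gives spatial compactness by Rellich. Temporal compactness comes from the momentum equation, which bounds $\pd_t(\F^\e u^\e)$ in a negative-order space $L^\infty(\R_+;H^{-m}_{\loc})$ once the pressure is eliminated against divergence-free test fields; an Aubin--Lions argument then delivers strong convergence in $L^2_{\loc}(\R_+\times\R^2)$.

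For (c) I would pass to the limit in the exterior Biot--Savart law. This requires tracking the convergence of the Green's function of $\R^2\setminus\O_\e$ to that of $\R^2\setminus\G$ (equivalently, of the associated conformal maps), which shows that $K_\e[\o^\e]$ converges to the Biot--Savart field of $\R^2\setminus\G$ evaluated at $\o$, while the harmonic circulation field $\g H_\e$, together with the contribution of the vorticity transported near $\G$, combine to produce the singular density $g_{\o_0}\d_\G$ appearing in \eqref{uomega}. The jump of the tangential velocity across $\G$ is exactly this density.

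Finally, (d) follows by passing to the limit in the weak vorticity formulation. Testing $\pd_t\o^\e+u^\e\cdot\na\o^\e=0$ against $\f\in C^\infty_c(\R^2\times(0,\infty))$ and inserting the cut-off, the only nonlinear contribution is $\int\o^\e\,u^\e\cdot\na\f$; since $\F^\e u^\e\to u$ strongly in $L^2_{\loc}$ by (a) and $\F^\e\o^\e\rightharpoonup\o$ weak$*$ by (b), this product converges to $\int\o\,u\cdot\na\f$, yielding the weak form of $\o_t+u\cdot\na\o=0$, while the cut-off commutators are supported near $\G$ and vanish as $\e\to0$. The principal obstacle is the velocity compactness of the second paragraph in the presence of the singular circulation field and the $\e$-dependent cut-off; once strong $L^2_{\loc}$ convergence is secured, the nonlinear passage to the limit is routine.
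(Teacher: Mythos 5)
Your outline for (b) and (d) matches the paper: the transport estimates give the weak$*$ limit of the vorticity, and once strong $L^2_{\loc}$ convergence of $\F^\e u^\e$ is known, the nonlinear term passes to the limit as a weak--strong pairing exactly as you describe (this is how Theorem \ref{5.11} is proved). The genuine gap is in your second paragraph, the proof of (a), and it is fatal: the div--curl/$H^1_{\loc}$-compactness route is precisely the one the paper singles out as unavailable in the thin-obstacle setting. Two concrete reasons. First, the commutator $u^\e\cdot\na^\perp\F^\e$ in $\curl(\F^\e u^\e)$ cannot be controlled: the geometric cancellation that saves the divergence term (the velocity is tangent to $\G_\e$ while $\na\F^\e$ is normal to it) gives only $u^\e\cdot\na\F^\e\to0$ in $L^1$ (Lemma \ref{4.5}), and there is no analogous cancellation for the tangential component. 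Quantitatively, the support of $\na\F^\e$ has Lebesgue measure of order $\e$ (not $\e^2$ as for a shrinking point), $\|\na\F^\e\|_{L^p}\le C_p/\e$ for $p<4$, and $u^\e$ is bounded only in $L^p_{\loc}$ for $p<4$ (Theorem \ref{4.2}), not in $L^\infty$, so the product cannot be bounded in $L^2_{\loc}$. Second, even granting such bounds, the conclusion would contradict the theorem itself: a uniform $H^1_{\loc}$ bound on $\F^\e u^\e$ would force $\curl u\in L^2_{\loc}$ for the limit, whereas the limit velocity has a tangential jump across $\G$ and $\curl u=\o+g_\o\d_\G$ with $g_\o\not\equiv0$ in general (Lemma \ref{5.9}); a Dirac mass along a curve is not in $L^2_{\loc}$, so $u\notin H^1_{\loc}$ near $\G$ and no uniform $H^1_{\loc}$ bound can hold. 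Your logarithmic cut-off suggestion does not repair this: it works for a point because a point has zero $H^1$-capacity, but a curve has positive capacity, so any cut-off vanishing near $\G$ and equal to $1$ at distance $\e$ from it satisfies $\int|\na\F^\e|^2\geq C/\e$.

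The paper's actual route for (a) never seeks an $H^1$ bound. It writes $u^\e$ explicitly through the conformal maps $T_\e:\Pi_\e\to D^c$ (formulas \eqref{u_e}--\eqref{I_2}), postulates quantitative convergence $T_\e\to T$ (Assumption \ref{3.1}), upgrades the weak$*$ vorticity convergence to weak $L^4$ convergence at \emph{every fixed time} via the temporal estimate $\|\F^\e\pd_t\o^\e\|_{H^{-2}}\le C$ and an equicontinuity/diagonal argument (Lemma \ref{4.6}, Proposition \ref{5.2}), and then decomposes $\F^\e u^\e-u$ into nine explicit terms, each sent to zero by repeated application of the dominated convergence theorem together with the kernel estimates of Lemmas \ref{I_est}, \ref{5.1}, \ref{5.3}, \ref{5.4} and \ref{5.5} (Theorem \ref{5.6}). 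A further small correction to your sketch of (c): the singular density does not arise as a limit of the circulation field $\g H^\e$, which here converges unproblematically (it is bounded in $L^2_{\loc}$, unlike in the point case); the term $g_\o\d_\G$ appears only when one computes $\curl u$ in all of $\R^2$ from the limiting formula \eqref{uomega}, as the jump of the tangential trace of $u$ across $\G$.
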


The limit velocity $u$ is explicitly given in terms of $\o$ and $\g$ (see Theorem \ref{5.6}) and can be viewes as the divergence free vector field which is tangent to $\G$, vanishing at infinity, with curl in $\R^2\setminus\G$ equal to $\o$ and with circulation around the curve $\G$ equal to $\g$. This velocity is  blowing up at
 the endpoints of the curve $\G$ as the inverse of the square root of
 the distance. and has a jump across $\G$. Moreover, we have $\curl u=\o+g_\o(s)\d_\G$ in $\R^2\times[0,\infty)$, where $\d_\G$ is the Dirac function of the curve $\G$, and the 
$g_\o$ which is defined in Lemma \ref{5.9} depends on $\o$ and the circulation $\g$. 
 The function $g_\o$ is continuous on $\G$ and blows up at
 the endpoints of the curve $\G$ as the inverse of the square root of
 the distance. One can also characterize $g_\o$ as the jump of the
 tangential velocity across $\G$. The presence of the additional term $g_\o$ in the expression of $\curl u$, compared of the Euler equation in the full plane, is compulsory to obtain a vector field tangent to the curve, with a circulation $\g$ around the curve.

There is a sharp contrast between the behavior of ideal flows around a
small and thin obstacle. In \cite{ift_lop}, the authors studied the
vanishing obstacle problem when the obstacle tends homothetically to a
point $P$. Their main result is that the limit vorticity satisfies a
modified vorticity equation of the form $\o_t+u.\na\o=0$, with $\diver
u=0$ and $\curl u=\o+\g\d(x-P)$. In other words, for small obstacles
the correction due to the vanishing obstacle appears as
time-independent additional convection centered at $P$, whereas in the
thin obstacle case, the correction term depends on the time. Although
treating a related problem, the present work requires a different
approach. Indeed, in \cite{ift_lop}, the proofs are simplified by the
fact that the obstacles are  homothetic  to a fixed domain. Indeed, an
easy change of variables $y=x/\e$ allows in that case to return to a
fixed obstacle and to deduce the required estimates.
This argument clearly does not work here and a considerable amount of
work is needed to characterize the conformal mapping that sends the
exterior of a small obstacle into the exterior of the unit disk. 
Moreover, in \cite{ift_lop} the authors use the div-curl Lemma to
obtain strong convergence for velocity. This is made possible by the
validity of some bounds on the divergence and the curl of the
velocity. A consequence of our work is that these estimates are no
longer valid in our case, so this approach can not work. We will be
able to prove directly strong convergence for the velocity through several
applications of the Lebesgue dominated convergence theorem.  We
finally observe that, in contrast to the case of \cite{ift_lop}, the
vanishing of the circulation $\g$ plays no role in our result. The
limit velocity will always verify the same type of equation.

We also mention that  Lopes Filho treated in \cite{lop} the case of
several obstacles with one of the obstacles tending to a point, but the author had to work on a bounded domain. In this case, we do not have explicit formulas anymore, and the conformal mapping technique is replaced by qualitative analysis using elliptic techniques, including variational methods and the maximum principle.

The remainder of this work is organized in five sections. We introduce in Section 2 a family of conformal mappings between the exterior of $\O_\e$ and the exterior of the unit disk, allowing the use of explicit formulas for basic harmonic fields and the Biot-Savart law, which will be really helpful to obtain sharp estimations. In the third part, we precisely formulate the flow problem in the exterior of a vanishing obstacle. In Section 4, we collect {\it a priori} estimates in order to find the equation limit in the Section 5 of this article. The last subsection concerns an existence result of the Euler equations on the exterior of a curve.

For the sake of clarity, the main notations are listed in an appendix at the end of the paper.

\newpage

\section{The laplacian in an exterior domain}

\subsection{Conformal maps} \

Let $D=B(0,1)$ and $S=\pd D$.
In what follows we identify $\R^2$ with the complex plane $\C$.

We begin this section by recalling some basic definitions on the curve.

\begin{definition} We call a {\it Jordan arc} a curve $C$ given by a parametric representation $C:\f(t)$, $0\leq t\leq 1$ with $\f$ an injective ($=$ one-to-one) function, continuous on $[0,1]$. An {\it open Jordan arc} has a parametrization $C:\f(t)$, $0< t<1$ with $\f$ continuous and injective on $(0,1)$.

We call a {\it Jordan curve} a curve $C$ given by a parametric representation $C:\p(t)$, $t\in \R$, $1$-periodic, with $\p$ an injective function on $[0,1)$, continuous on $\R$.
\end{definition}

Thus a Jordan curve is closed ($\f(0)=\f(1)$) whereas a Jordan arc has distinct endpoints. If $J$ is a Jordan curve in $\C$, then the Jordan Curve Theorem states that $\C\setminus J$ has exactly two components $G_0$ and $G_1$, and these satisfy $\pd G_0=\pd G_1=J$.

The Jordan arc (or curve) is of class $C^{n,\a}$ ($n\in\N^*,0<\a\leq 1$) if its parametrization $\f$ is $n$ times continuously differentiable, satisfying $\f'(t)\neq 0$ for all $t$, and if $|\f^{(n)}(t_1)-\f^{(n)}(t_2)|\leq C|t_1-t_2|^\a$ for all $t_1$ and $t_2$.

Let $\G: \G(t),0\leq t\leq 1$ be a Jordan arc. Then the subset $\R^2\setminus\G$ is connected and we will denote it by $\Pi$. The purpose of this part is to obtain some properties of a biholomorphism $T: \Pi \to \inte\ D^c$.

\begin{proposition}\label{2.2}
If $\G$ is a $C^2$ Jordan arc, such that the intersection with the segment $[-1,1]$ is a finite union of segments and points, then there exists a biholomorphism $T:\Pi\to \inte\ D^c$ which verifies the following properties:
\begin{itemize}
\item $T^{-1}$ and $DT^{-1}$ extend continuously up to the boundary, and $T^{-1}$ maps $S$ to $\G$,
\item $DT^{-1}$ is bounded,
\item $T$ and $DT$ extend continuously up to $\G$ with different values on each side of $\G$, except at the endpoints of the curve where $T$ behaves like the square root of the distance and $DT$ behaves like the inverse of the square root of the distance,
\item $DT$ is bounded in the exterior of the disk $B(0,R)$, with $\G \subset B(0,R)$,
\item $DT$ is bounded in $L^p(\Pi\cap B(0,R))$ for all $p<4$ and $R>0$.
\end{itemize}
\end{proposition}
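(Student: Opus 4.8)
The plan is to build $T$ by reducing the problem to the explicit model of the straight segment, where the square-root behaviour is transparent, and then to transfer all estimates. First I would use an affine normalization placing the two endpoints of $\G$ at $\pm 1$, and note that $\Pi\cup\{\infty\}$ is a simply connected domain on the Riemann sphere whose complement is the continuum $\G$ (an arc does not separate the plane). The Riemann mapping theorem then furnishes a biholomorphism $F:\Pi\to\C\setminus[-1,1]$ onto the slit plane, normalized by $F(\infty)=\infty$, $F'(\infty)>0$. I would then set $T=J^{-1}\circ F$, where $J(\z)=\tfrac12(\z+\z^{-1})$ is the Joukowski map carrying $\inte\ D^c$ onto $\C\setminus[-1,1]$; this gives $T:\Pi\to\inte\ D^c$, with $T^{-1}=F^{-1}\circ J$. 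For the boundary correspondence I would invoke the theory of prime ends (Carath\'eodory): the interior points of $\G$ are each accessible from exactly two prime ends, one per side of the slit, while the two endpoints give a single prime end each. Hence $T^{-1}$ extends continuously as a surjection $S\to\G$ that is two-to-one over the open arc and one-to-one over the endpoints, and $T$ extends to each side of $\G$ with generically distinct values, as claimed.

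Next I would treat the regularity along the open arc. Fix a closed subarc of $\G$ avoiding both endpoints; locally it is a $C^2$ boundary arc of both $\Pi$ and of the target, so the Kellogg--Warschawski boundary regularity theorem applies: since a $C^2$ curve is $C^{1,\a}$, the derivative $DF$ (and $DF^{-1}$) extends continuously and H\"older-continuously up to the corresponding arc of $S$ and is nonvanishing there. Because $J$ and $DJ$ are bounded and continuous up to $S$ and $DJ\to\tfrac12$ at infinity, $DT^{-1}=(DF^{-1}\circ J)\,DJ$ extends continuously and is bounded on all of $\bar D^c$, and $T^{-1}(S)=\G$. Inverting, $DT=(DJ^{-1}\circ F)\,DF$ is continuous and bounded up to $\G$ from each side away from the endpoints, with two generically different one-sided limits, and $DT$ is bounded outside $B(0,R)$ by the normalization at infinity, where $T(z)=\b z+O(1)$.

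The main obstacle is the behaviour at the two endpoints, where the slit produces an interior opening angle of $2\pi$; this is exactly what forces the square-root behaviour. The key point to establish is that the \emph{comparison} map $F$ is $C^1$ and nondegenerate up to the boundary \emph{including} the tips, so that all the singular behaviour is carried by the explicit map $J$. To see this at a tip $z_0$, I would straighten both sides simultaneously: the substitutions $\s_1(z)=(z-z_0)^{1/2}$ on the $\Pi$ side and $\s_2(\z)=(\z\mp1)^{1/2}$ on the segment side each open a $2\pi$ sector into a $C^1$ boundary neighbourhood, so $\s_2\circ F\circ\s_1^{-1}$ is a conformal map between smooth-boundary domains to which the previous step applies; unwinding shows $F(z)\mp1\sim a\,(z-z_0)$ with $a\neq0$. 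Since $J(\z)-(\pm1)\sim\tfrac12(\z\mp1)^2$ and $J^{-1}(\zeta)-(\pm1)\sim c\sqrt{\zeta\mp1}$, composing gives $T^{-1}(\z)-z_0\sim (\z\mp1)^2$ (so $DT^{-1}$ vanishes linearly at the tips, hence stays bounded) while $T(z)$ vanishes like $|z-z_0|^{1/2}$ and $DT$ blows up like $|z-z_0|^{-1/2}$. The hypothesis that $\G\cap[-1,1]$ is a finite union of segments and points is what I would use to legitimize the comparison with the straight model, ensuring $\G$ does not overlap the reference segment in a pathological way that obstructs constructing and straightening $F$.

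Finally I would collect the estimates. The only unbounded contribution to $DT$ on $\Pi\cap B(0,R)$ comes from the two tips, where $DT\sim|z-z_0|^{-1/2}$. Since $|x|^{-1/2}\in L^p_{\loc}(\R^2)$ precisely when $p/2<2$, that is $p<4$, this yields $DT\in L^p(\Pi\cap B(0,R))$ for every $p<4$ and every $R$, and identifies $4$ as the sharp exponent; combined with the boundedness of $DT^{-1}$, the continuity of $T^{-1}$ and $DT^{-1}$ up to $S$ with $T^{-1}(S)=\G$, the two-sided extension of $T,DT$ up to $\G$, and the boundedness of $DT$ outside $B(0,R)$, this establishes all five assertions.
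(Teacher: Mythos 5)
Your construction is genuinely different from the paper's (you apply the Riemann mapping theorem \emph{first}, between the two slit domains, and compose with the explicit Joukowski inverse afterwards, whereas the paper pulls $\G$ back through the Joukowski map to obtain a closed $C^{1,1}$ Jordan curve $\tilde\G$ and only then invokes Riemann mapping), but there is a genuine gap at the heart of your argument: you assert that the normalized map $F:\Pi\to\C\setminus[-1,1]$ sends the endpoint prime ends of $\G$ to the tips $\pm1$ of the slit, i.e.\ $F(z)\mp1\sim a(z-z_0)$, and both your straightening argument at the endpoints and your side-wise Kellogg--Warschawski argument along the open arc presuppose this. The normalization $F(\infty)=\infty$, $F'(\infty)>0$ concerns an interior point and has no bearing on boundary correspondence, and the claim is in fact false in general. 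To see this, set $T=J^{-1}\circ F$ and let $a,b\in S$ be the images on the unit circle of the two endpoint prime ends of $\G$; the two sides of $\G$ then correspond to the two arcs of $S\setminus\{a,b\}$. By conformal invariance of harmonic measure, and since harmonic measure from $\infty$ in $D^c$ of an arc of $S$ is its normalized arclength (invert by $z\mapsto 1/z$), the harmonic measure from $\infty$ of each side of $\G$ equals the normalized length of the corresponding arc. Tip-to-tip correspondence forces $\{a,b\}=\{\pm1\}$, hence equal harmonic measure $1/2$ for the two sides of $\G$ seen from infinity. But for an asymmetric arc --- say a $C^2$ arc running along most of a circle, whose inner face is shielded from infinity and thus has small harmonic measure --- this fails, while all hypotheses of the proposition hold (a circular arc meets the normalized segment $[-1,1]$ in at most two points). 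For such a $\G$, an endpoint of $\G$ is sent by $F$ to an interior point of the slit, so $\s_2\circ F\circ\s_1^{-1}$ is not a map between the domains you describe; dually, some interior point of one side of $\G$ is sent to a slit tip, where the target boundary is not $C^{1,\a}$ and Kellogg--Warschawski does not apply.

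The strategy is salvageable, but not as written. One repair is a case analysis on the image prime end: if an endpoint of $\G$ goes to a smooth side point $q$ of the slit, straightening only the source gives $F-q\sim a(z-z_0)^{1/2}$, and since $J^{-1}$ extends conformally across the slit at $q$ by reflection, $T$ still has square-root behavior there; if an interior side point $z_1$ of $\G$ goes to a slit tip, straightening only the target gives $F\mp1\sim a(z-z_1)^2$, which composed with $J^{-1}(\z)\mp 1\sim c\sqrt{\z\mp1}$ leaves $T$ regular at $z_1$ --- so the conclusion survives, but only after this analysis. A cleaner repair is to drop the slit plane as intermediate target altogether and straighten directly against $D^c$: apply local boundary regularity to $T\circ\s_1^{-1}$ at the endpoints and to $T$ on each side elsewhere; since every point of $S$ is a smooth boundary point, no matching of distinguished boundary points is needed. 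This is precisely the structural reason the paper composes in the opposite order: its Riemann map $F$ goes from the exterior of the closed $C^{1,1}$ curve $\tilde\G$ onto $D^c$, two domains with homogeneous smooth boundaries, so the explicit map carries the entire endpoint singularity and Kellogg--Warschawski is applied globally to a Jordan domain. Note finally that the finite-intersection hypothesis, which you invoke only vaguely, is used in the paper exclusively to guarantee that $\tilde\G$ is $C^{1,1}$ (cf.\ Remark \ref{2.4}); a correctly repaired version of your route would apparently not need it at all.
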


\begin{proof}

We first study the case where the arc is the segment $[-1,1]$. We can have here an explicit formula for $T$. Indeed, the Joukowski function 
$$G(z)= \frac{1}{2} (z+\frac{1}{z})$$
is a biholomorphism between the exterior of the disk and the exterior of the segment. It maps the circle $C(0,R)$ on the ellipse parametrized by $\frac{1}{2}(R+1/R)\cos\th+\frac{1}{2}(R-1/R)i \sin\th$ with $\th\in [0,2\pi)$, and it maps the unit circle on the segment.  

Remarking that $G(z)=G(1/z)$ we can conclude that $G$ is also a biholomorphism between the interior of the disk minus $0$ and the exterior of the segment.

Therefore, any $z\notin [-1,1]$ has one antecedent of $G$ in $D$ and another one in $\inte\ D^c$. For $z\in [-1,1]$ the antecedents are $\exp(\pm i \arccos z)=z\pm i\sqrt{1-z^2}$. Therefore, there are exactly two antecedents except for $-1$ and $1$. In fact, we have considered the double covering $G$ from $\C^*$ to $\C$, which is precisely ramified over $-1$ and $1$. 

Let $\tilde T$ be the biholomorphism between the exterior of the segment and $\inte\ D^c$, such that $\tilde T^{-1}=G$, and let $\tilde T_{\inte}=1/\tilde T$ be the biholomorphism between the exterior of the segment and $D\setminus\{0\}$, such that $\tilde T^{-1}_{\inte}=G$.

To find an explicit formula of $\tilde T$, we have to solve an equation of degree two. We find two solutions:
\begin{equation}
\label{Tpm}
\tilde T_+ = z+\sqrt{z^2-1} \text{\ \ and\ \ } \tilde T_- = z-\sqrt{z^2-1}.
\end{equation}

We consider that the function square-root is defined by $\sqrt{z} = \sqrt{|z|}e^{i\th/2}$ with $\th$, the argument of $z$, verifying $-\pi<\th\leq\pi$. It is easy to observe that $\tilde T = \tilde T_+$ on $\{z \mid \Re(z)>0\}\cup i\R_+$ and $\tilde T = \tilde T_-$ on $\{z \mid  \Re(z)<0\}\cup i\R_-$. Despite this, $\tilde T$ is $C^\infty(\C\setminus[-1,1],\inte D^c)$ because $\tilde T=G^{-1}$.

Therefore in the segment case, $T=\tilde T$ and the first two points are straightforward. An obvious calculation allows us to find an explicit formula for $\tilde  T'$:
\begin{equation}
\label{T'}
\tilde T'(z)=1\pm \frac{z}{\sqrt{z^2-1}},
\end{equation}
with the choice of sign as above. This form shows us that $D\tilde T$ blows up at the endpoints of the segment as the inverse of the square root of the distance, which is bounded in $L^p_{\loc}$ for $p<4$. Moreover, a mere verification can be done to find that for every $x\in (-1,1)$, we have
$$\lim_{z\to x, \Im (z)>0} T(z)=x+i\sqrt{1-x^2}=\tilde T_+(x)$$
even if $\Re (z)<0$, and 
$$\lim_{z\to x, \Im (z)<0} T(z)=x-i\sqrt{1-x^2}=\tilde T_-(x).$$
In the same way, $DT$ extends continuously up to each side of $\G$, which concludes the Lemma in the segment case.

\vspace{12pt}

Now, we come back to our problem, with any curve $\G$. After applying a homothetic transformation, a rotation and a translation, we can suppose that the endpoints of the curve are $-1=\G(0)$ and $1=\G(1)$. Next, we consider the curve $\tilde \G \equiv \overline{\tilde T(\G)}\cup \overline{\tilde T_{\inte}(\G)}=\tilde T_+(\G)\cup \tilde T_-(\G)$. We now show that $\tilde\G$ is a $C^{1,1}$ Jordan curve.

We consider first the case where $\G$ does not intersect the segment $(-1,1)$. Then $\g_1\equiv\tilde T(\G)\subset D^c$ and $\g_2\equiv\tilde T_{\inte}(\G)\subset D$ are $C^1$ open Jordan arcs, with the endpoints on $-1$ and $1$ (see Picture 1). So $\tilde T(-1)=-1=\tilde T_{\inte}(-1)$ and we can observe that $\tilde \G$ is a Jordan curve.

\begin{figure}[ht]
                \begin{minipage}[t]{4cm}
\includegraphics[height=5.2cm]{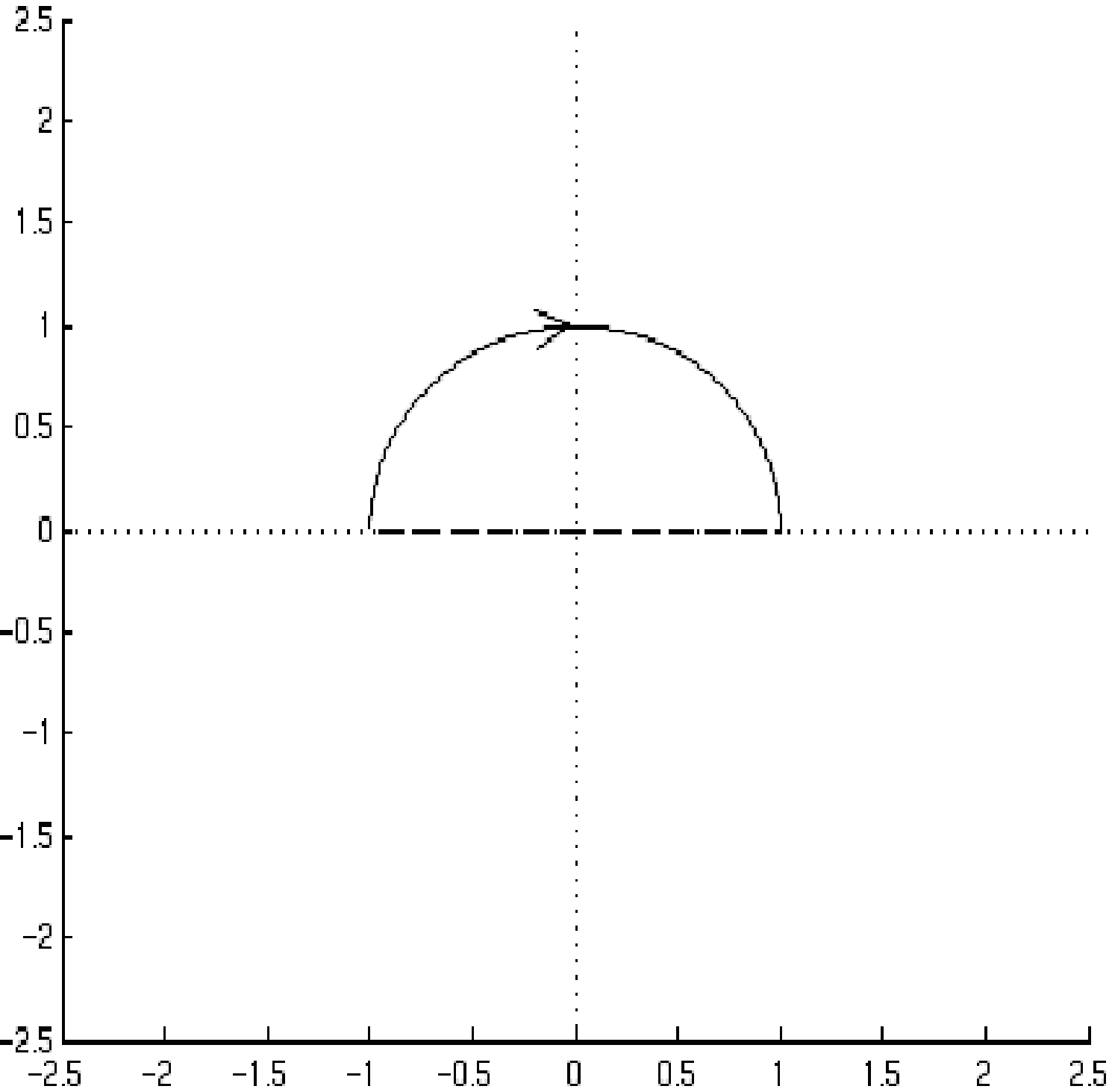}

\centerline{\ \hspace{1.2cm}$\G$}

                \end{minipage}
                \hfill
   \hspace{-1cm}             \begin{minipage}[t]{4cm}
\includegraphics[height=5.2cm]{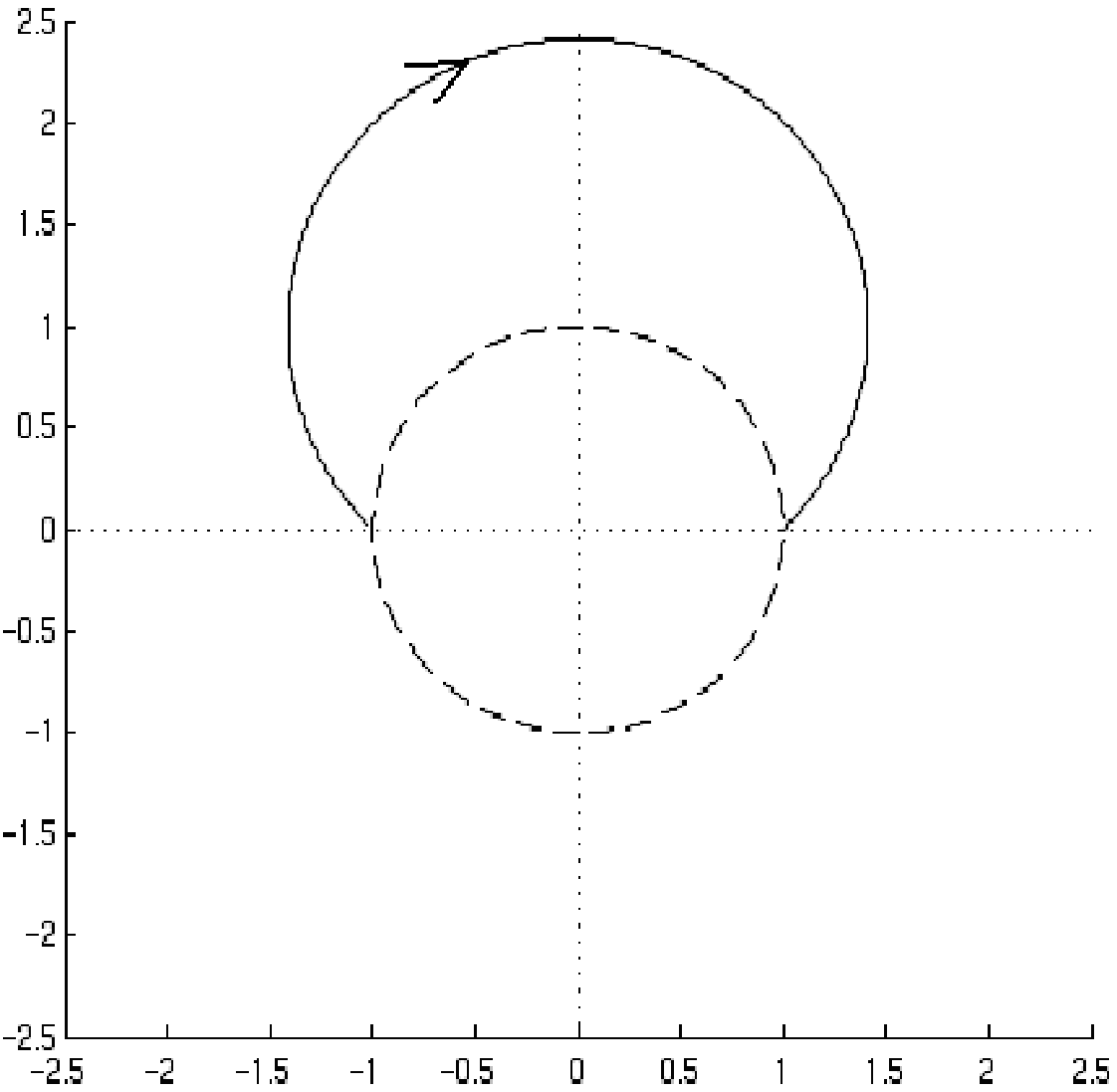}
              
\hspace{1.7cm} {$\g_1=\tilde T(\G)$}

                \end{minipage}
                \hfill
                \begin{minipage}[t]{4cm}
\hspace{-1.5cm}
\includegraphics[height=5.2cm]{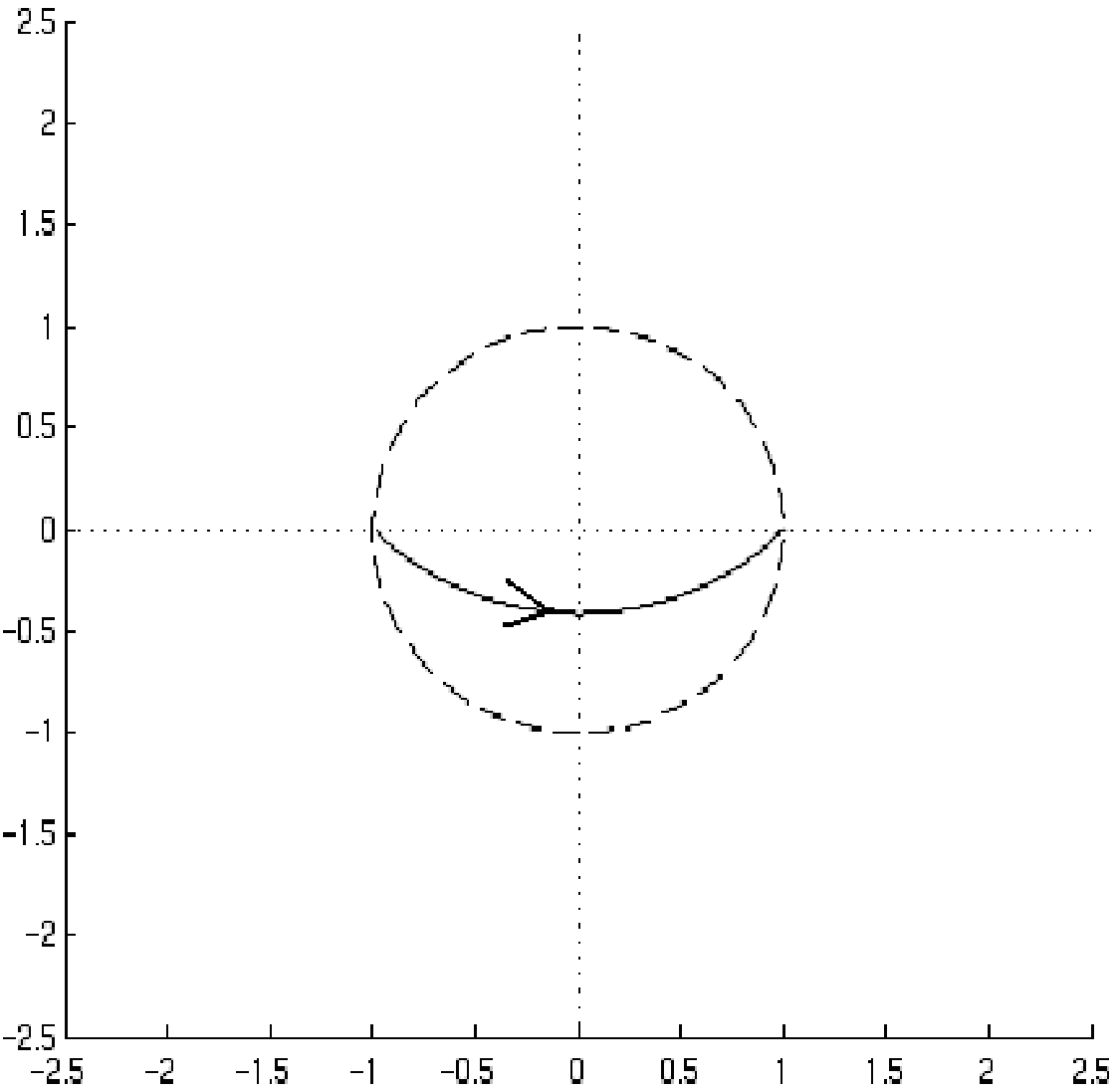}
\centerline{$\g_2=\tilde T_{\inte}(\G)$\ \ \ }

                \end{minipage}
                \end{figure}

\centerline{{\bf PICTURE 1}: $\G$ does not intersect $[-1,1]$}

\newpage

\begin{figure}[ht]
                \begin{minipage}[t]{4cm}
\vspace{-0.1cm}
\includegraphics[height=7cm]{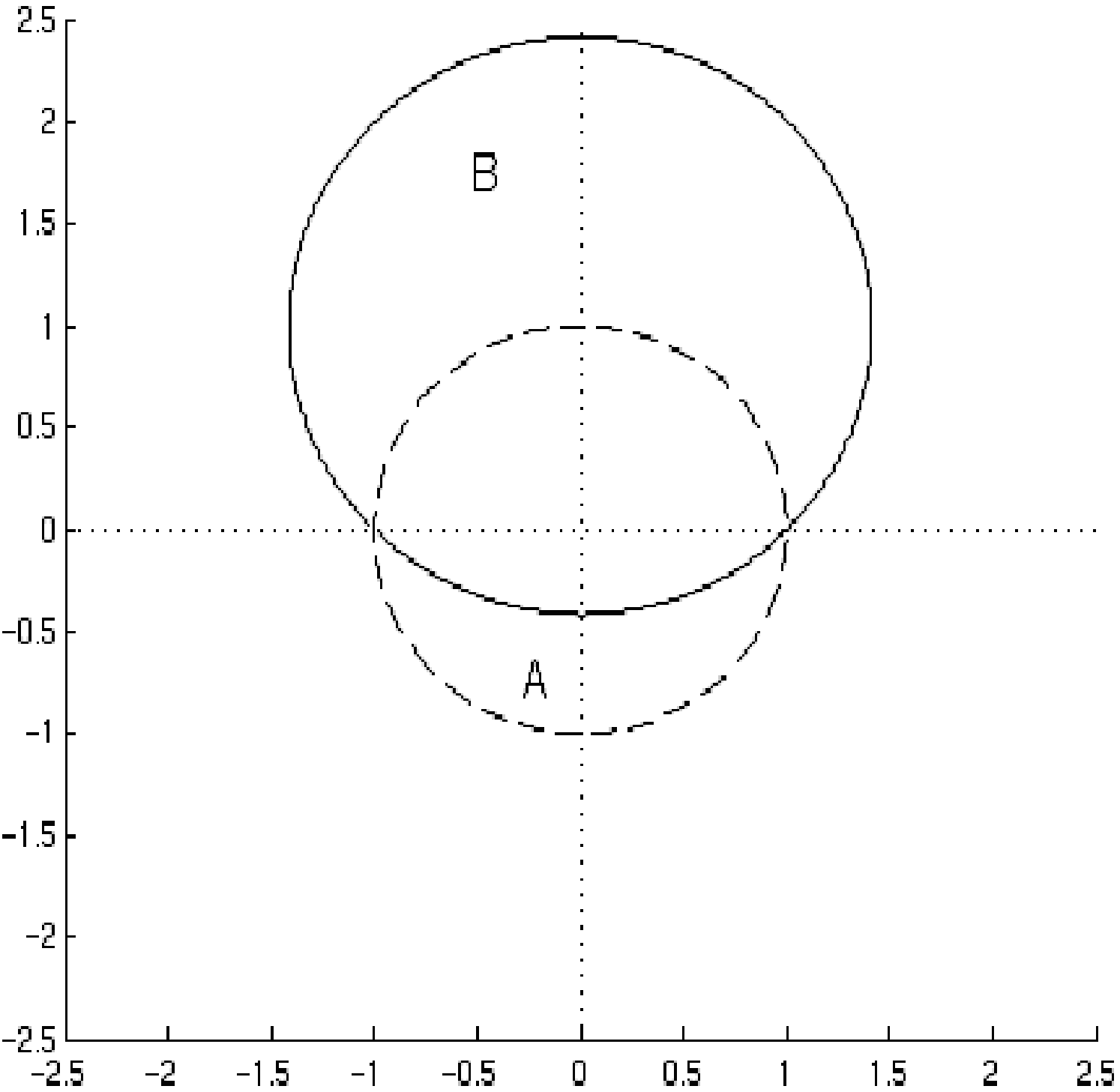}

$\hbox{\ \hspace{3cm} $\tilde\G=G^{-1}(\G)$}$

                \end{minipage}
                \hfill
                \begin{minipage}[t]{8cm}
We wrote {\it open} Jordan curve because the problem with $-1$ and $1$ is that $\tilde T'(\pm 1)$ is not defined. However, if we use the arclength coordinates
\begin{equation}
\label{curviligne}
s(t)=\int_0^t|\g_1'(\t)|d\t=\int_0^t|\tilde T'(\G(\t))||\G'(\t)|d\t,
\end{equation}
which are well-defined and bounded because $\tilde T'$ is bounded in $L^1_{\loc}$, then we have $\dfrac{d\g_1}{ds}=\dfrac{\g'_1 }{|\g_1'|}$. So to prove the derivative continuity, we should show that $\lim_{t\to 0}\dfrac{\g'_1 }{|\g_1'|}$ and  $\lim_{t\to 0}\dfrac{\g'_2 }{|\g_2'|}$ exist and are opposite. For that, we will use the following Lemma :
\end{minipage}\end{figure}

\begin{lemma} \label{2.3} If there exists a neighborhood of $0$ where $\G(t)$ does not intersect the segment $(-1,1)$, then $\dfrac{\tilde T'(\G)}{|\tilde T'(\G)|}(t)$ has a limit as $t\to 0$.
\end{lemma}

\medskip \hspace{-6mm} {\it Proof of the lemma.} First, since $\tilde T'(z)=1-z/\sqrt{z^2-1}$ in a neighborhood of $-1$, we compute
$$\dfrac{\tilde T'(\G)}{|\tilde T'(\G)|}(t)=\dfrac{|\sqrt{\G^2-1}|}{\sqrt{\G^2-1}}(t)\dfrac{\sqrt{\G^2-1}-\G}{|\sqrt{\G^2-1}-\G|}(t).$$
The second fraction tends to 1 as $t\to 0$. We have $\G'(0)\neq 0$, so we can write $\G^2(t)=1+at+o(t)$ for $a\in \C^*$.
If $a\notin \R^-$ then for $t$ small enough $\{\G^2(t)-1\}\subset \C\setminus\R^-$ and 
$$\lim_{t\to 0}\dfrac{\tilde T'(\G)}{|\tilde T'(\G)|}(t)=\frac{\sqrt{|a|}}{\sqrt{a}}=e^{-i\th/2}\text{\ with\ }\th\equiv \arg a\in (-\pi,\pi).$$

For $a\in\R^-$, as $a=(\G^2)'(0)=2\G(0)\G'(0)$, then we have $\G'(0)\in\R^+$ and the curve is tangent to the segment $[-1,1]$. We have here two cases :
\begin{itemize}
\item if $\G$ is over the segment on the neighborhood, then $\Im(\G^2(t)-1)<0$ and  $$\lim_{t\to 0}\dfrac{\tilde T'(\G)}{|\tilde T'(\G)|}(t)= i,$$

\item if $\G$ is below the segment on the neighborhood, then $\Im(\G^2(t)-1)>0$ and  $$\lim_{t\to 0}\dfrac{\tilde T'(\G)}{|\tilde T'(\G)|}(t)= -i.$$
\end{itemize}

{\centerline{\hfill $\square$}\medskip}

\vspace{12pt}

Let us continue the proof of Proposition \ref{2.2}. Lemma \ref{2.3} allows us to observe that $\tilde \G$ is $C^1$ in $-1$, because
$$\lim_{t\to 0}\dfrac{\g'_1}{|\g_1'|}(t)=\lim_{t\to 0}\dfrac{\tilde T'(\G)}{|\tilde T'(\G)|}(t)\dfrac{\G'}{|\G'|}(t)=-\lim_{t\to 0}-\dfrac{|\tilde T^2(\G)|}{\tilde T^2(\G)}(t)\dfrac{\tilde T'(\G)}{|\tilde T'(\G)|}(t)\dfrac{\G'}{|\G'|}(t)=-\lim_{t\to 0}\dfrac{\g'_2}{|\g_2'|}(t),$$ because $\tilde T_{\inte}=1/\tilde T$.

To prove that $\tilde \G'$ is Lipschitz, we will show that $\g'_1$ and $\g'_2$ are $C^1$ with the arclength parametrization denoted by $s$ and defined in (\ref{curviligne}) ($t$ denotes the variable of the original parametrization). Let $f_1(s)=\frac{d\g_1}{ds}(s)= \frac{\g'_1(t)}{|\g_1'(t)|}$, where the primes denote derivatives with respect to $t$, then we need to prove that $\frac{d f_1}{ds}$ has a limit when $s\to 0$. We have
$$\frac{d f_1}{ds}(s) = \frac{\g''_1}{|\g_1'|^2}- \frac{\g'_1}{|\g_1'|^4}<\g'_1,\g''_1> = \frac{1}{|\g_1'|^2}\Bigl(\g''_1- \frac{\g'_1}{|\g_1'|}\langle\frac{\g'_1}{|\g_1'|},\g''_1\rangle\Bigl) \equiv \frac{1}{|\g_1'|^2}A.$$

We compute 
\begin{eqnarray*}
\g'_1&=&\tilde T'(\G)\G',\\
\g''_1&=&\tilde T''(\G)(\G')^2+\tilde T'(\G)\G'',
\end{eqnarray*}
with 
\begin{eqnarray*}
\tilde T(z) &=& z-\sqrt{z^2-1},\\
\tilde T'(z) &=& 1-z/\sqrt{z^2-1},\\
\tilde T''(z) &=& -1/\sqrt{z^2-1} + z^2/\sqrt{z^2-1}^3.
\end{eqnarray*}
We do some Taylor expansions in a neighborhood of zero:
\begin{eqnarray*}
\G(t)&=&-1+at+bt^2+O(t^3), \\
\G^2(t)&=&1-2at+(a^2-2b)t^2+O(t^3), \\
1/\sqrt{\G^2-1}(t)&=&\frac{1}{\sqrt{-2a}}\frac{1}{\sqrt{t}}(1-t(a^2-2b)/(-4a))+O(t^{3/2}).
\end{eqnarray*}
the last expansion holds in any case, except  when $\G$ is tangent to the segment ($a\in \R^+$) and over the segment on a neighborhood of $-1$. In this last case, we should  replace $\frac{1}{\sqrt{-2a}}$ by $i$ instead of $-i$.

Then
\begin{eqnarray*} 
\tilde T'(\G)&=&\frac{1}{\sqrt{-2a}}\frac{1}{\sqrt{t}}+1+O(t^{1/2}),\\
\tilde T''(\G)&=&\frac{1}{\sqrt{-2a}^3}\frac{1}{\sqrt{t}^3}+\frac{C_1}{\sqrt{t}}+O(t^{1/2}).
\end{eqnarray*}
and
\begin{eqnarray*}
\g'_1&=&\frac{a}{\sqrt{-2a}}\frac{1}{\sqrt{t}}+a+O(t^{1/2}),\\
\frac{\g'_1}{|\g_1'|}&=&\frac{a}{|a|}\frac{|\sqrt{-2a}|}{\sqrt{-2a}}+ C_2 \sqrt{t}+O(t),\\
\g''_1&=&\frac{a^2}{\sqrt{-2a}^3}\frac{1}{\sqrt{t}^3}+ C_3\frac{1}{\sqrt{t}}+O(1).
\end{eqnarray*}
Now, we can evaluate $A$:
\begin{eqnarray*}
\g''_1- \frac{\g'_1}{|\g_1'|}<\frac{\g'_1}{|\g_1'|},\g''_1>&=&\frac{1}{\sqrt{t}^3}\Bigl(\frac{a^2}{\sqrt{-2a}^3}-\frac{a}{|a|}\frac{|\sqrt{-2a}|}{\sqrt{-2a}}\langle \frac{a}{|a|}\frac{|\sqrt{-2a}|}{\sqrt{-2a}},\frac{a^2}{\sqrt{-2a}^3}\rangle\Bigl)\\
&&+C_4\frac{1}{t}+O(t^{-1/2}).
\end{eqnarray*}
We can easily see that $\arg (a^2/\sqrt{-2a}^3)=\pm\pi + \arg (a/\sqrt{-2a})$, so 
$$\frac{a^2}{\sqrt{-2a}^3}-\frac{a}{|a|}\frac{|\sqrt{-2a}|}{\sqrt{-2a}}\langle\frac{a}{|a|}\frac{|\sqrt{-2a}|}{\sqrt{-2a}},\frac{a^2}{\sqrt{-2a}^3}\rangle=0$$
and $d f_1/ds=C_5+O(t^{1/2})$, which means that $d f_1/ds$ has a limit as $s\to 0$. This argument holds for $\g_2$, doing the calculations with  $\tilde T_{\inte}(z) = z+\sqrt{z^2-1}$. So $d\g_1/ds$ and $d\g_2/ds$ are $C^1$ on $[0,1]$ and we see that $\tilde\G'$ is  Lipschitz because $\tilde\G=\g_1\cup\g_2$.

\vspace{12pt}

Finally, if $\G$ intersects $[-1,1]$ at one point $x=\G(t_0)$, then $\tilde T(\G)$ is the union of two Jordan curves with a jump : $\tilde T(\G(t_0^{-}))=1/\tilde T(\G(t_0^+))$ (see Picture 2). In this case, $\tilde T_{\inte}(\G)$ is also the union of two Jordan arcs which extend $\tilde T(\G)$, indeed 
$$\tilde T_{\inte}(\G(t_0^+))=1/\tilde T(\G(t_0^+))=\tilde T(\G(t_0^-)).$$

\begin{figure}[ht]

                \begin{minipage}[t]{6cm}
\includegraphics[height=7cm]{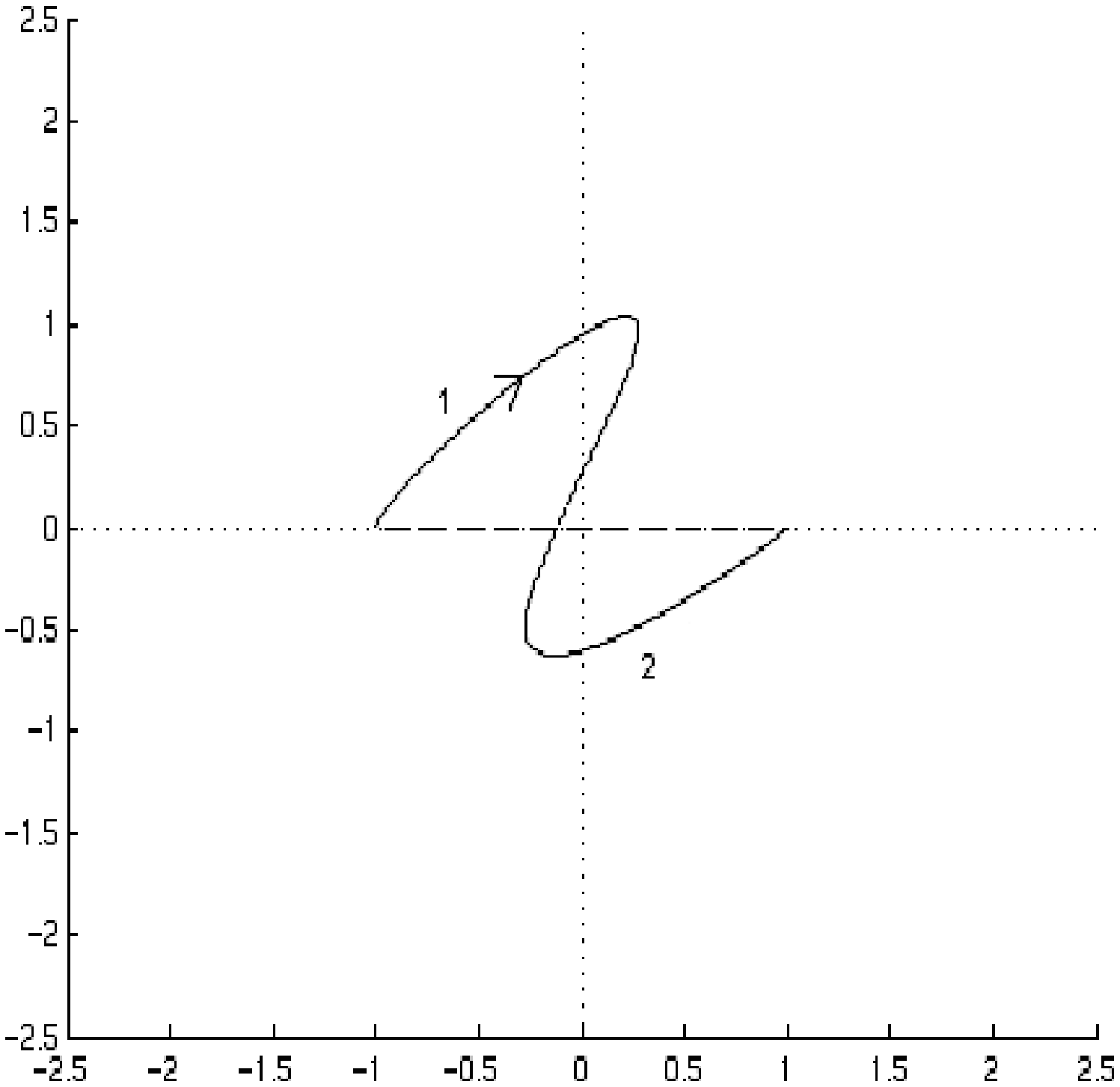}

\centerline{\ \hspace{1.2cm}$\G$}   
                \end{minipage}
                \hfill
                \begin{minipage}[t]{6cm}
\hspace{-1.5cm}
\includegraphics[height=7cm]{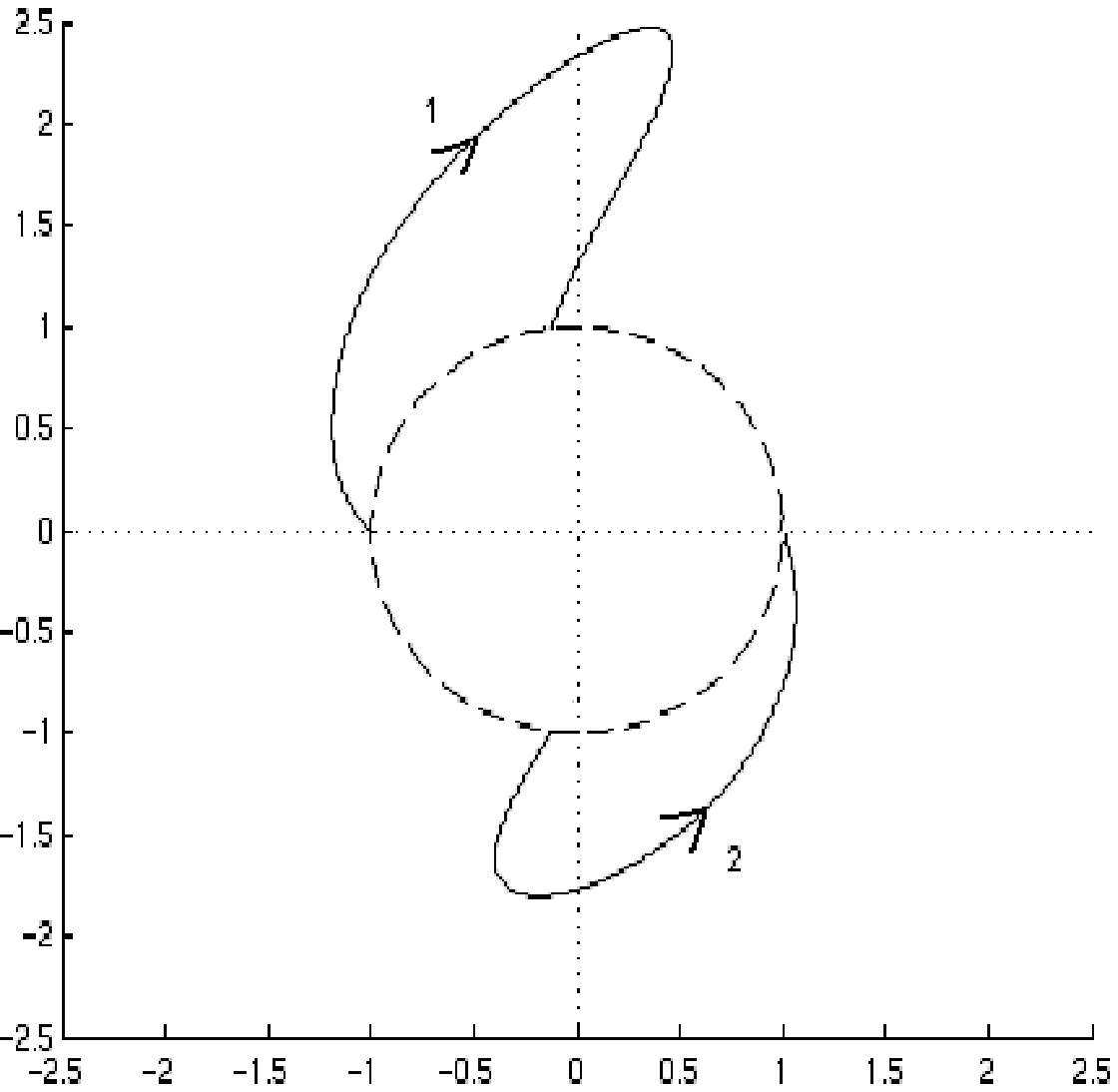}
\centerline{$\tilde T(\G)$\hspace{1cm}\ }
                \end{minipage}
                \end{figure}

\centerline{{\bf PICTURE 2}: $\G$ intersects $[-1,1]$ at one point}

\begin{figure}[ht]
                \begin{minipage}[t]{6cm}
\includegraphics[height=7cm]{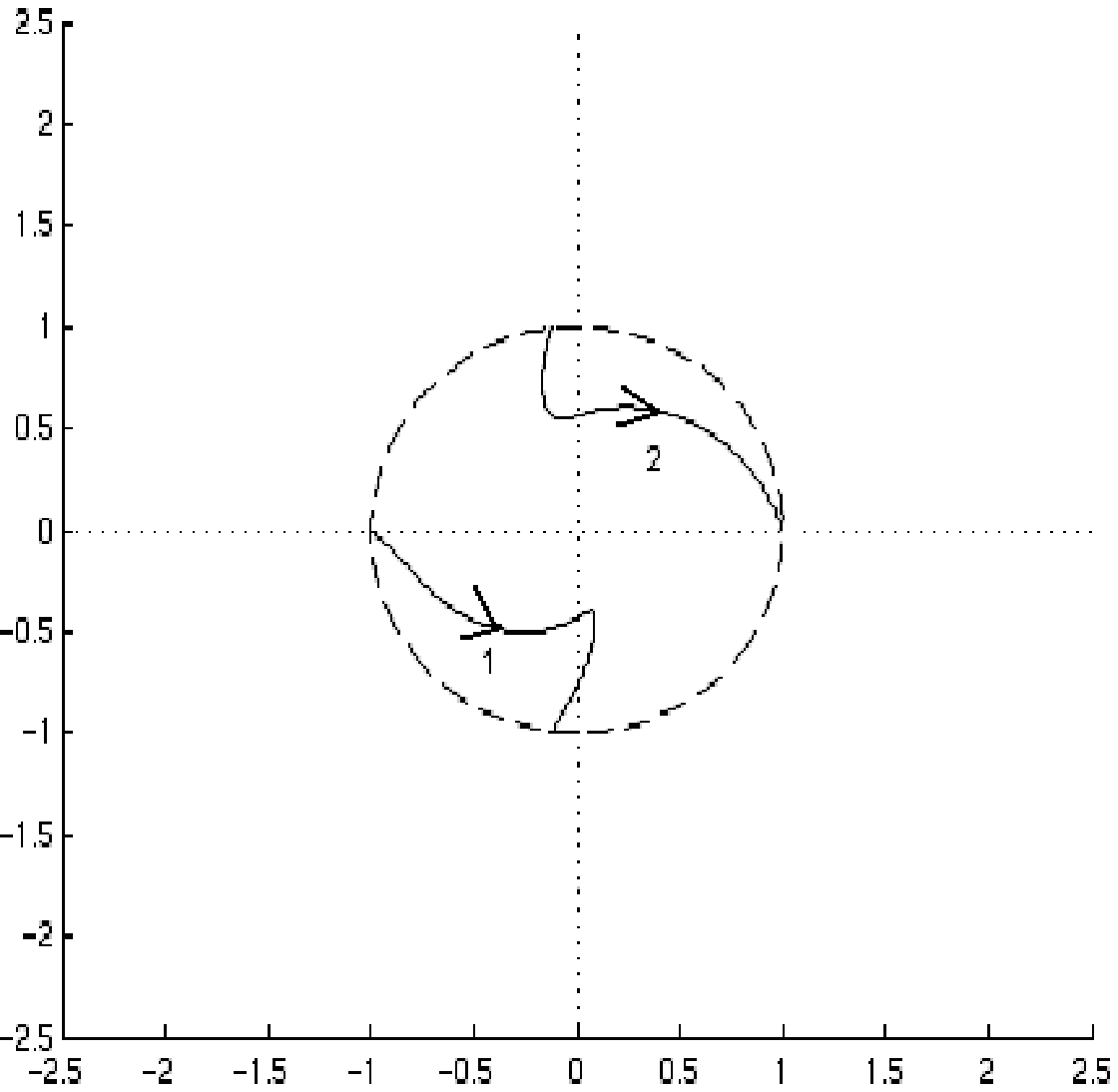}
              
\centerline{\ \hspace{1.3cm}$\tilde T_{\inte}(\G)$}
                \end{minipage}
                \hfill
                \begin{minipage}[t]{6cm}
\hspace{-1.5cm}
\includegraphics[height=7cm]{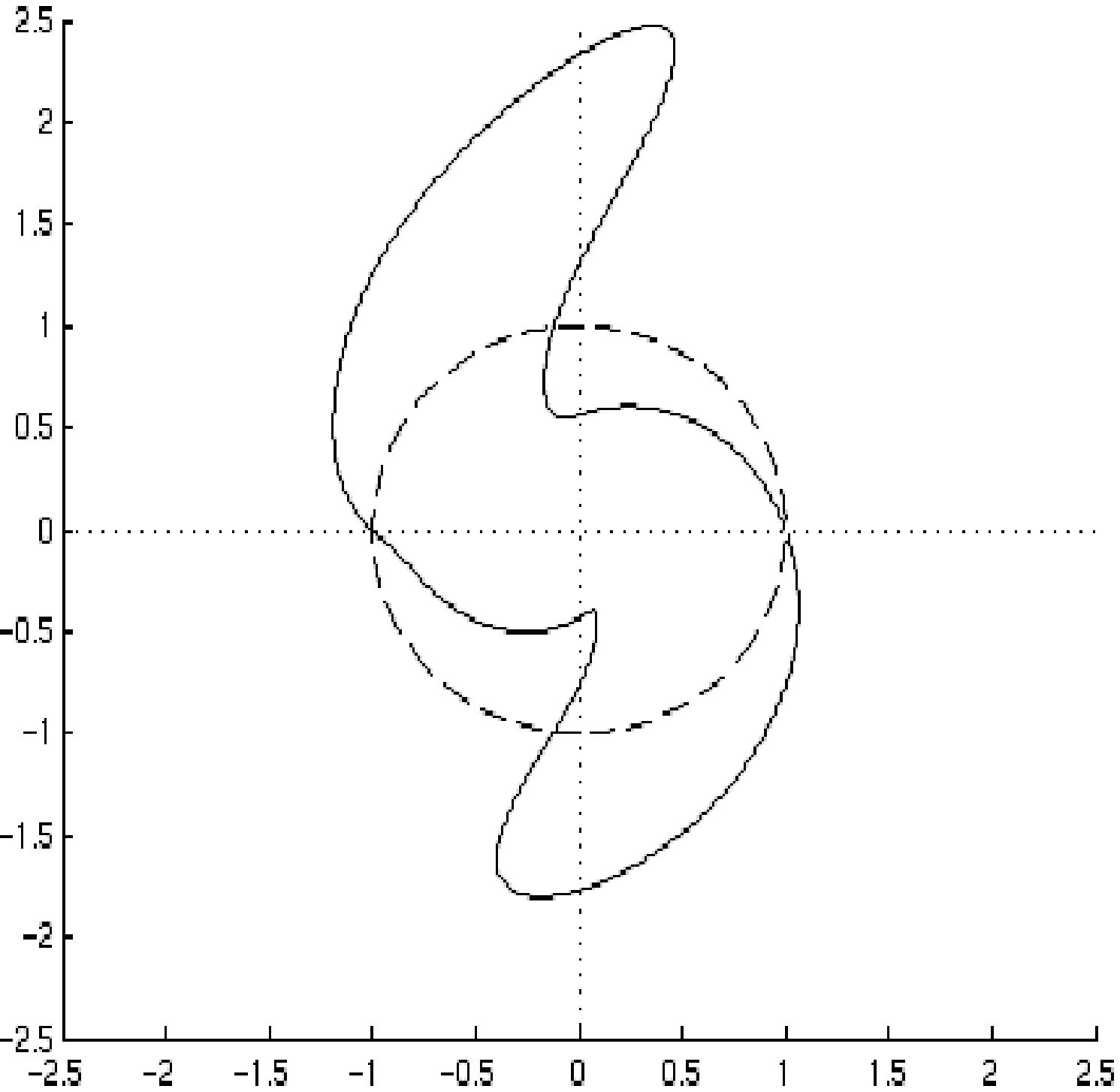}

\centerline{$\tilde\G=G^{-1}(\G)\ \ \ \ \ $}
                \end{minipage}
                \end{figure}

To show the continuity of $\tilde \G'$ on $\tilde T(\G(t_0))$, we consider for example that $x\in (0,1)$ and that $\Im(\G(t_0^-))>0$ and $\Im(\G(t_0^+))<0$. We can compute
\begin{eqnarray*}
\tilde T(\G(t_0^-)) &=&  x+i\sqrt{1-x^2},\\
\tilde T'(\G(t_0^+))&=&  1+xi/\sqrt{1-x^2}, \text{\ because\ }\tilde T=T_+\text{ in a neighborhood of }x\\
\tilde T'(\G(t_0^-))&=&  1-xi/\sqrt{1-x^2},
\end{eqnarray*}
to check that $-\tilde T(\G(t_0^-))^2 \tilde T'(\G(t_0^+))=\tilde T'(\G(t_0^-))$, which allows us to conclude that 
$$\tilde T'_{\inte}(\G(t_0^+))=-1/\tilde T(\G(t_0^+))^2 \tilde T'(\G(t_0^+))=\tilde T'(\G(t_0^-)).$$

We leave to the reader the other cases. Let us do just another case : $x=0$ then 
\begin{eqnarray*}
\tilde T(\G(t_0^-)) &=&  \pm i,\\
\tilde T'(\G(t_0^+))&=&  1, \\
\tilde T'(\G(t_0^-))&=&  1,
\end{eqnarray*}
and as $-(\pm i)^2=1$ we have the continuity of $\tilde\G'$. As $\tilde\G''$ is bounded, $\tilde\G'$ is Lipschitz, so the curve $\tilde\G$ is $C^{1,1}$ and closed. We have just studied the case of one or zero intersection of $\G$ with the segment $(-1,1)$ but this argument works in the general case because we have a finite number of intersection. For example, if $\G\subset [-1,1]$ in a neighborhood of $-1$, then $\tilde G\subset S$, so $\tilde G$ is obviously $C^{1,1}$ in this neighborhood. 

\vspace{24pt}

We denote by $\tilde \Pi$ the unbounded connected component of $\R^2\setminus\tilde\G$. We claim that we can construct $T_2$, a biholomorphism between $\Pi$ and $\tilde \Pi$, such that $T_2^{-1}=G$. Indeed, if we introduce $A=\tilde\Pi\cap \bar D$ and $B=(\inte\ \tilde\Pi^c)\cap D^c$ (see Picture 1), we observe that $B=1/A$, because $\g_2=1/\g_1$ and $1/S=S$. As $G(1/z)=G(z)$, $G$ is bijective on $\inte D^c$ and $1/(\pd D\cap\tilde\Pi)\subset\tilde\Pi^c$ then $G$ is bijective on $\tilde\Pi$ and $G(\tilde\Pi)= \R^2\setminus\G$. Therefore, we have an function $T_2$ mapping the exterior of the Jordan arc to the exterior of the inner domain of a $C^{1,1}$ Jordan curve, such that $T_2^{-1}(z)=1/2(z+1/z)$. 

Next, we just have to use the Riemann mapping Theorem and we find a conformal mapping $F$ between $\tilde\Pi$ and $D^c$, such that $T\equiv F\circ T_2$ maps $\Pi$ to $D^c$ and $F(\infty)=\infty$. \label{F} To finish the proof, we use the Kellogg-Warschawski Theorem (see Theorem 3.6 of \cite{pomm_2}, which can be applied for the exterior problems), to observe that $F$ and $F'$  have a continuous extension up to the boundary. Therefore, adding the fact that $DF$ and $DF^{-1}$ are bounded at infinity (see Remark \ref{2.5}), we find the same properties as in the segment case, in particular that $DT$ blows up at the endpoints of the curve like the inverse of the square root of the distance (see (\ref{T'})).  
\end{proof}

\begin{remark} \label{2.4} If $\G$ intersects the segment $[-1,1]$ infinitely many times, the curve $\tilde \G$ may not be even $C^1$. For example a curve which starts as $t\mapsto (t-1;e^{1/t^2}\sin(1/t)), t\in[0,1/4]$ has two sequences $t_n\to 0$ and $\tilde t_n\to 0$ such $\tilde T'(\G)/|\tilde T'(\G)|$ tends to $i$ and $-i$.
\end{remark}


\begin{remark} \label{2.5}If we have a biholomorphism $H$ between the exterior of an open connected and simply connected domain $A$ and $D^c$, such that $H(\infty)=\infty$, then there exists a nonzero real number $\b$ and a  holomorphic function $h:\Pi\to \C$ such that:
$$H(z)=\b z+ h(z).$$
with 
$$h'(z)=O\Bigl(\frac{1}{|z|^2}\Bigl), \text{ as }|z|\to\infty.$$

This property can be applied for the $F$ above to see that $DF$ and $DF^{-1}$ are bounded.
\end{remark}
\medskip \hspace{-6mm} {\it Proof of Remark \ref{2.5}.}  Indeed, after a translation we can suppose that $0\in \inte\ A$, and we consider $W(z)=1/H(1/z)$. The function $W$ is holomorphic in a neighborhood of $0$ and can be written as $W(z)=a_0+a_1 z+a_2 z^2+...\ $. We have $W(0)=0$ so $a_0=0$. Now we want to prove that $a_1\neq 0$ thanks to the univalence. Indeed, if $a_1=0$, we consider the first non-zero $a_k$, and we observe that there exists $R<0$ such that $|W(z)-a_k z^k|\leq  |a_k||z^k|$ in $B(0,R)$. Next, we denote by $g(z)=a_k z^k$. On $\pd B(0,R)$,  $|W(z)-g(z)|\leq  |a_k|R^k \leq |g(z)|$. Then we can apply the Rouch\'e Theorem to conclude that $W$ and $g$ have the same number of zeros in $B(0,R)$, which is a contradiction with the fact that $W$ is a biholomorphism and $g$ not. Therefore $a_1\neq 0$ and $H(z)=z/a_1+b_0+b_1/z+...$, which ends the proof. Multiplying $H$ by $|a_1|/\bar{a_1}$, we can assume that $\b=1/a_1$ is  real number.
{\centerline{\hfill $\square$}\medskip}

\subsection{The Biot-Savart law.}\label{biot} \ 

Let $\O_0$ be a bounded, open, connected, simply connected subset of the plane, the boundary of which, denoted by $\G_0$, is a $C^\infty$ Jordan curve. We will denote by $\Pi_0$ the unbounded connected component of $\R^2\setminus\G_0$, so that $\O_0^c=\overline{\Pi_0}$.

We denote by $G_{\Pi_0}=G_{\Pi_0} (x,y)$ the Green's function, whose the value is:
\begin{equation}
\label{green}
G_{\Pi_0}(x,y)=\frac{1}{2\pi}\log\frac{|T_0(x)-T_0(y)|}{|T_0(x)-T_0(y)^*||T_0(y)|}
\end{equation}

writing $x^*=\frac{x}{|x|^2}$. The Green's function is the unique function which verifies: 
\begin{equation}
\label{green_equa}
\left\lbrace \begin{aligned}
&\D_y G_{\Pi_0}(x,y)=\d(y-x) \text{ for }x,y\in \Pi_0 \\
&G_{\Pi_0}(x,y)=0 \text{ for }y\in\G_0 \\
&G_{\Pi_0}(x,y)=G_{\Pi_0}(y,x)
\end{aligned}
\right.
\end{equation}

Then the kernel of the Biot-Savart law is $K_{\Pi_0}=K_{\Pi_0}(x,y) \equiv \na_x^\perp G_{\Pi_0}(x,y)$. With $(x_1,x_2)^\perp=\begin{pmatrix} -x_2 \\ x_1\end{pmatrix}$, the explicit formula of $K_{\Pi_0}$ is given by 
\begin{equation}
\label{kernel}
K_{\Pi_0}(x,y)=\frac{((T_0(x)-T_0(y))DT_0(x))^\perp}{2\pi|T_0(x)-T_0(y)|^2}-\frac{((T_0(x)-T_0(y)^*)DT_0(x))^\perp}{2\pi|T_0(x)-T_0(y)^*|^2}.
\end{equation}

We require information on far-field behavior of $K_{\Pi_0}$. We will use several times the following general relation:
\begin{equation}
\label{frac}
\Bigl| \frac{a}{|a|^2}-\frac{b}{|b|^2}\Bigl|=\frac{|a-b|}{|a||b|},
\end{equation}
which can be easily checked by squaring both sides.

We now find the following inequality:
$$|K_{\Pi_0}(x,y)|\leq C \frac{|T_0(y)-T_0(y)^*|}{|T_0(x)-T_0(y)||T_0(x)-T_0(y)^*|}.$$

For $f\in C_c^\infty({\Pi_0})$, we introduce the notation
$$K_{\Pi_0}[f]=K_{\Pi_0}[f](x)\equiv\int_{\Pi_0} K_{\Pi_0}(x,y)f(y)dy.$$

It is easy to see, for large $|x|$, that $\hbox{$|K_{\Pi_0}[f]|(x)\leq \dfrac{C_1}{|x|^2}$}$ where $C_1$ depends on the size of the support of $f$. We used here the explicit formulas for the biholomorphism $T_0$ (Remark \ref{2.5}).

\begin{lemma}\label{2.6} The vector field $u=K_{\Pi_0}[f]$ is a solution of the elliptic system:
\begin{equation*}
\left\lbrace\begin{aligned}
\diver u &=0 &\text{ in } {\Pi_0} \\
\curl u &=f &\text{ in } {\Pi_0} \\
u.\hat{n}&=0 &\text{ on } \G_0 \\
\lim_{|x|\to\infty}|u|&=0
\end{aligned}\right.
\end{equation*}
\end{lemma}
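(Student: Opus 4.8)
The plan is to exhibit $u=K_{\Pi_0}[f]$ as the skew gradient of a stream function built from the Green's function, and then to read off each of the four properties directly from \eqref{green_equa}. First I would set
$$\p(x)=\int_{\Pi_0}G_{\Pi_0}(x,y)\,f(y)\,dy,$$
the potential of $f$ with respect to $G_{\Pi_0}$. Since $f\in C_c^\infty(\Pi_0)$ and, for fixed $x$, the kernel $\na_x G_{\Pi_0}(x,\cdot)$ has only a locally integrable first-order singularity on the diagonal (the Green's function carries merely a logarithmic singularity there), one may differentiate once under the integral sign to obtain $u(x)=\na^\perp\p(x)$, i.e. $K_{\Pi_0}[f]$ is the perpendicular gradient of $\p$. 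Divergence-freeness is then automatic, since $\diver\na^\perp\p=-\pd_1\pd_2\p+\pd_2\pd_1\p=0$ for any sufficiently smooth $\p$; and the curl reduces to a Laplacian, $\curl\na^\perp\p=\pd_1^2\p+\pd_2^2\p=\D\p$.

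It remains to identify $\D\p$ with $f$. Using the symmetry $G_{\Pi_0}(x,y)=G_{\Pi_0}(y,x)$ together with the first line of \eqref{green_equa}, the Green's function also solves $\D_x G_{\Pi_0}(x,y)=\d(x-y)$ distributionally, so formally
$$\D\p(x)=\int_{\Pi_0}\D_x G_{\Pi_0}(x,y)\,f(y)\,dy=\int_{\Pi_0}\d(x-y)\,f(y)\,dy=f(x).$$
The honest version of this step is the standard potential-theoretic computation: split off the logarithmic part $\tfrac{1}{2\pi}\log|T_0(x)-T_0(y)|$, whose composition with the biholomorphism $T_0$ keeps the correct singularity (up to a harmless conformal factor), and recover the delta mass by the usual excised-disk argument — integrate over $\Pi_0$ minus a small ball around $x$, integrate by parts, and let the radius tend to zero — the remaining terms in \eqref{green} being harmonic in $x$ near $x=y$ and contributing nothing.

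For the boundary condition, the key observation is that the second line of \eqref{green_equa} combined with symmetry gives $G_{\Pi_0}(x,y)=0$ for $x\in\G_0$; hence $\p\equiv 0$ on $\G_0$, so $\p$ is constant along the boundary and its tangential derivative vanishes there. Writing $\hat{\t}$ for the unit tangent to $\G_0$ and using the pointwise identity $\na^\perp\p\cdot\hat{n}=\na\p\cdot\hat{\t}$, we get $u\cdot\hat{n}=\pd_{\hat{\t}}\p=0$ on $\G_0$. Finally, the decay $\lim_{|x|\to\infty}|u|=0$ is already recorded just above the statement, where the far-field bound $|K_{\Pi_0}[f]|(x)\le C_1/|x|^2$ was deduced from Remark \ref{2.5}.

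The main obstacle is the rigorous justification of the second step, i.e. the distributional identity $\D\p=f$: the second derivatives of $G_{\Pi_0}$ are not locally integrable, so one cannot simply differentiate twice inside the integral, and the delta contribution must be extracted by the excised-disk and integration-by-parts argument sketched above. Some care is also needed to confirm that $\p|_{\G_0}=0$ is attained in a strong enough sense (continuity of $\p$ up to $\G_0$) to license the tangential-derivative argument; this follows from the continuity of $T_0$ and $DT_0$ up to $\G_0$ recorded in Proposition \ref{2.2}.
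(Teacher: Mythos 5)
Your proof is correct. The paper actually gives no argument here -- it simply states ``The proof of this Lemma is straightforward'' -- and your write-up is precisely the standard argument the author leaves to the reader: $u=\nabla^\perp\psi$ with $\psi(x)=\int_{\Pi_0}G_{\Pi_0}(x,y)f(y)\,dy$, so that $\diver u=0$ is automatic, $\curl u=\Delta\psi=f$ follows from \eqref{green_equa} (with the delta mass extracted from the conformally transplanted logarithmic singularity), $u\cdot\hat{n}=0$ on $\G_0$ follows from $\psi|_{\G_0}=0$ via the vanishing tangential derivative, and the decay at infinity is the bound $|K_{\Pi_0}[f]|(x)\le C_1/|x|^2$ already recorded just before the Lemma.
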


The proof of this Lemma is straightforward.

\subsection{Harmonic vector fields}\ 

We will denote by $\hat{n}$ the unit normal exterior to $\Pi_0$ at $\G_0$. In what follows all contour integrals are taken in the counter-clockwise sense, so that $\int_{\G_0} F.\dss=-\int_{\G_0} F.\hat{n}^\perp ds$.

\begin{proposition}\label{2.7} There exists a unique classical solution $H=H_{\Pi_0}$ of the problem:
\begin{equation}
\left\lbrace\begin{aligned}
\diver H &=0 &\text{ in } {\Pi_0} \\
\curl H &=0 &\text{ in } {\Pi_0} \\
H.\hat{n}&=0 &\text{ on } \G_0 \\
\lim_{|x|\to\infty}|H|&=0 \\
\int_{\G_0} H.\dss &=1
\end{aligned}\right.
\end{equation}

Moreover, $H_{\Pi_0}=\mathcal{O}(1/|x|)$ as $|x|\to\infty$
\end{proposition}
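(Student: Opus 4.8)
The plan is to reduce everything to the exterior of the unit disk through the biholomorphism $T_0:\Pi_0\to\inte\ D^c$, to transport an explicit harmonic field, and then to read off the five conditions by conformal invariance. Concretely, I would introduce the stream function
\[
\p(x)=\frac{1}{2\pi}\log|T_0(x)|
\]
and set $H_{\Pi_0}=\na^\perp\p$; in the matrix notation of \eqref{kernel} this reads
\[
H_{\Pi_0}(x)=\frac{1}{2\pi}\frac{\big(T_0(x)\,DT_0(x)\big)^\perp}{|T_0(x)|^2}.
\]
This is exactly the pullback by $T_0$ of the field $\frac{1}{2\pi}w^\perp/|w|^2$ on $\inte\ D^c$, whose stream function is $\frac{1}{2\pi}\log|w|$, and which is divergence free, curl free, tangent to $S$, decaying like $1/|w|$, and of circulation $1$ around $S$.

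First I would verify the defining properties. Since $H_{\Pi_0}=\na^\perp\p$, one has $\diver H_{\Pi_0}=0$ automatically and $\curl H_{\Pi_0}=\D\p=0$ because $\log|T_0|$ is the real part of a locally defined holomorphic logarithm of $T_0$, hence harmonic on $\Pi_0$ (note $|T_0|>1$ there, so there is no singularity). On $\G_0$ we have $|T_0|\equiv1$, so $\p\equiv0$ is constant along $\G_0$; consequently $H_{\Pi_0}.\hat n=0$, since the tangential derivative $\pd_\tau\p$ vanishes. For the far-field behavior, Remark \ref{2.5} applied to $T_0$ gives $T_0(x)=\b x+O(1)$ and $DT_0(x)=\b\,\I+O(1/|x|^2)$ with $\b\neq0$ real as $|x|\to\infty$; substituting into the formula yields $|H_{\Pi_0}(x)|=\frac{1}{2\pi|x|}+o(1/|x|)$, which proves both $\lim_{|x|\to\infty}|H_{\Pi_0}|=0$ and the moreover clause $H_{\Pi_0}=\mathcal{O}(1/|x|)$. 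Finally, the circulation is obtained from the conformal invariance of the tangential line integral: $\int_{\G_0}H_{\Pi_0}.\dss$ equals the circulation of $\frac{1}{2\pi}w^\perp/|w|^2$ around $S$, which is the outward flux $\frac{1}{2\pi}\oint_S\pd_r\log|w|\,ds=\frac{1}{2\pi}\oint_S 1\,ds=1$.

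For uniqueness I would transport two solutions $H_1,H_2$ to $\inte\ D^c$, where the difference corresponds to a holomorphic function $g(w)=\sum_{n\ge1}c_nw^{-n}$ that vanishes at infinity, is tangent to $S$, and has zero circulation. Writing the tangency as $\re\big(w\,g(w)\big)=0$ on $S$ and matching Fourier modes in $w=e^{i\th}$ forces $c_n=0$ for all $n\ge2$ and $\re c_1=0$; the vanishing of the circulation, $\re\oint_S g\,dw=-2\pi\,\im c_1=0$, then gives $c_1=0$, so $g\equiv0$ and $H_1=H_2$. The main obstacle is twofold. First, one must justify that $T_0$ transports the boundary conditions faithfully, which relies on $T_0$ being conformal up to $\G_0$; this is guaranteed for the smooth Jordan curve $\G_0$ by the Kellogg--Warschawski theorem together with Remark \ref{2.5}, so that the normal component and the tangential line integral transform correctly. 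Second, one must use crucially that $\inte\ D^c$ is not simply connected, so a curl-free field need not derive from a single-valued potential: it is precisely the prescribed value of the circulation that fixes the single remaining degree of freedom $c_1$ and thereby makes the solution unique.
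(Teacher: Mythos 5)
Your proposal is correct and takes essentially the same route as the paper: the paper exhibits the very same candidate $H_{\Pi_0}(x)=\frac{1}{2\pi}\na^\perp\log|T_0(x)|$, asserts that one can check it is the unique solution, and defers the details to \cite{ift_lop}. Your conformal-transport verification of the five conditions and the Laurent-series uniqueness argument are precisely the details the paper omits, so nothing needs to change.
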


To prove this, one can check that $H_{\Pi_0} (x)=\frac{1}{2\pi}\na^\perp \log |T_0(x)|$ is the unique solution. The details can be found in \cite{ift_lop}.

\section{Flow in an exterior domain}

Let us formulate precisely here the small obstacle limit.

\subsection{The initial-boundary value problem} \

Let $u=u(x,t)=(u_1(x_1,x_2,t),u_2(x_1,x_2,t))$ be the velocity of an incompressible, ideal flow in $\O_0^c$. We assume that $u$ is tangent to $\G_0$ and $u\to 0$ as $|x|\to \infty$. The evolution of such a flow is governed by the Euler equations:
\begin{equation}
\left\lbrace\begin{aligned}
&\pd_t u+u.\na u=-\na p &\text{ in }{\Pi_0}\times(0,\infty) \\
&\diver u =0 &\text{ in } {\Pi_0}\times[0,\infty) \\
&u.\hat{n} =0 &\text{ in } \G_0\times[0,\infty) \\
&\lim_{|x|\to\infty}|u|=0 & \text{ for }t\in[0,\infty)\\
&u(x,0)=u_0(x) &\text{ in } \O^c
\end{aligned}\right.
\end{equation}
 where $p=p(x,t)$ is the pressure. An important quantity for the study of this problem is the vorticity:
$$\o=\curl(u)=\pd_1 u_2 - \pd_2 u_1.$$

The velocity and the vorticity are coupled by the elliptic system:
\begin{equation*}
\left\lbrace\begin{aligned}
&\diver u=0 &\text{ in }{\Pi_0}\times[0,\infty) \\
&\curl u =\o &\text{ in } {\Pi_0}\times[0,\infty) \\
&u.\hat{n} =0 &\text{ in } \G_0\times[0,\infty) \\
&\lim_{|x|\to\infty}|u|=0 & \text{ for }t\in[0,\infty)\\
\end{aligned}\right.
\end{equation*}

Lemma \ref{2.6} and Proposition \ref{2.7} assure us that the general solution of this system is given by $u=u(x,t)=K_{\Pi_0}[\o(.,t)](x)+\a H_{\Pi_0}(x)$ for a function $\a=\a(t)$. However, using the fact that the circulation $\g$ of $u$ around $\G$ is conserved, we prove that $\a$ does not depend on the time, and $\a(t)=\g+\int_{\Pi_0} \curl u_0(x)$ (see \cite{ift_lop}). Therefore, if we give the circulation, then we have the uniqueness of the solution of the previous system.

Finally, we can now write the vorticity formulation of this problem as:
\begin{equation}
\left\lbrace\begin{aligned}
&\pd_t \o+u.\na \o=0 &\text{ in }{\Pi_0}\times(0,\infty) \\
& u=K_{\Pi_0}[\o]+\a H_{\Pi_0}&\text{ in } {\Pi_0}\times[0,\infty) \\
&\o(x,0)=\curl u_0(x) &\text{ in } {\Pi_0}
\end{aligned}\right.
\end{equation}

\subsection{The evanescent obstacle}\label{evanescent}\ 

We will formulate in this subsection a family of problems, parametrized by the size of the obstacle. Therefore, we fix $\o_0$ such that its support is compact and does not intersect $\G$. 

We will consider a family of domain $\O_\e$, containing $\G$, with $\e$ small enough, such that the support of $\o_0$ does not intersect $\O_\e$. If we denote by $T_\e$ the biholomorphism between $\Pi_\e\equiv \O_\e^c$ and $D^c$, then we suppose the following properties:
\begin{assumption}\label{3.1}
The biholomorphism family   $\{T_\e\}$ verifies
\begin{itemize}
\item[(i)] $\|T_\e - T\|_{L^\infty (B(0,R)\cap \Pi_\e)} \to 0$ for any $R>0$,
\item[(ii)] $\det(DT_\e^{-1})$ is bounded in $\Pi_\e$ independently of $\e$,
\item[(iii)] $\| DT_\e- DT\|_{L^3(B(0,R)\cap \Pi_\e)} \to 0$  for any $R>0$,
\item[(iv)] there exist $R>0$ and $C>0$ such that $|DT_\e(x)|\leq C |x|$ on $B(0,R)^c$.
\end{itemize}
\end{assumption}

\begin{remark}\label{3.2} 
We can observe that point (iii) implies that for any $R$, $DT_\e$ is bounded in $L^p(B(0,R)\cap \Pi_\e)$ independently of $\e$, for $p\leq3$.
\end{remark}

Just before going on, we give here one example of an obstacle family.
\begin{example}
We consider $\O_\e\equiv T^{-1}(B(0,1+\e)\setminus D)$. In this case, $T_\e=\frac{1}{1+\e}T$ verifies the previous assumption. If $\G$ is a segment, then $\O_\e$ is the interior of an ellipse around the segment.
\end{example}
The problem of this example is that the shape of the obstacle is the same of $\G$. 

We naturally denote by $\G_\e=\pd\O_\e$ and $\Pi_\e=\inte\ \O_\e^c$. We denote also by $G_\e$,$K_\e$ and $H_\e$ the previous functions corresponding at $\Pi_\e$.

Consider also the following problem :
\begin{equation*}\label{E_e}
 \left\lbrace\begin{aligned}
&\pd_t \o^\e+u^\e.\na \o^\e=0 &\text{ in }\Pi_\e\times(0,\infty) \\
& u^\e=K^\e[\o^\e]+\a H^\e &\text{ in } \Pi_\e\times[0,\infty) \\
&\o^\e(x,0)=\o_0(x) &\text{ in } \Pi_\e
\end{aligned}\right.
\end{equation*}

It follows from the work of Kikuchi \cite{kiku} that, for any $\e>0$, if $\o_0$ is sufficiently smooth then this system has a unique solution.

We now write the explicit formulas for $K^\e$ and $H^\e$:
\begin{equation} \label{K} 
K^\e  = \dfrac{1}{2\pi} DT_\e^t(x)\Bigl(\dfrac{(T_\e(x)-T_\e(y))^\perp}{|T_\e(x)-T_\e(y)|^2}-\dfrac{(T_\e(x)- T_\e(y)^*)^\perp}{|T_\e(x)- T_\e(y)^*|^2}\Bigl)
\end{equation}
and 
\begin{equation}\label{H}
H^\e=\frac{1}{2\pi}DT_\e^t(x)\Bigl(\frac{(T_\e(x))^\perp}{|T_\e(x)|^2}\Bigl).
\end{equation}
We introduce in the same way, $K$ and $H$, replacing $T_\e$ by $T$.

The regularity of $T$ implies that $H^\e$ is bounded in $L^2_{\loc}$, which is really better than the punctual limit for the obstacle (see \cite{ift_lop}) where $H^\e$ is just $L^1_{\loc}$. In our case, the limit is easier to see when $T_\e\to T$, and this extra regularity will allow us the passing to the limit.

\section{{\it A priori} estimates}

These estimates are important to conclude on the asymptotic behavior of the sequences $(u^\e)$ and $(\o^\e)$. The transport nature of (\ref{E_e}) gives us the classical estimates for the vorticity: $\|\o^\e(.,t)\|_{L^p(\Pi_\e)}=\|\o_0\|_{L^p(\R^2)}$  and for $p\in[1,+\infty]$. In this article, we suppose that $\o_0$ is $L^\infty$ and compactly supported. Moreover we choose $\e$ small enough, so that the support of $\o_0$ does not intersect $\Pi_\e$.

\subsection{Velocity estimate}\ 

We begin by recalling a result found in \cite{ift}.
\begin{lemma}\label{4.1} Let $a\in (0,2)$, $S\subset\R^2$ and $h:S\to\R_+$ be a function in $L^1(S)\cap L^\infty(S)$. Then
$$\int_S \frac{h(y)}{|x-y|^a}dy\leq C\|h\|^{1-a/2}_{L^1(S)}\|h\|^{a/2}_{L^\infty(S)}.$$
\end{lemma}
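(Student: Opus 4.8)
The plan is to prove this by the standard device of splitting the domain of integration into a small ball around the singularity and its complement: where the kernel is singular I would control $h$ by its $L^\infty$ norm, and where the kernel is integrable I would control it by the $L^1$ norm, finally optimizing over the radius of the ball.

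First I would fix a radius $r>0$, to be chosen later, and write
$$\int_S \frac{h(y)}{|x-y|^a}\,dy = \int_{S\cap B(x,r)} \frac{h(y)}{|x-y|^a}\,dy + \int_{S\setminus B(x,r)} \frac{h(y)}{|x-y|^a}\,dy.$$
For the near integral I would bound $h$ by $\|h\|_{L^\infty(S)}$ and evaluate the remaining kernel integral in polar coordinates. Since we are in $\R^2$ and $a<2$, this gives
$$\int_{B(x,r)} \frac{dy}{|x-y|^a} = 2\pi\int_0^r \rho^{1-a}\,d\rho = \frac{2\pi}{2-a}\,r^{2-a},$$
which is finite precisely because $a<2$; hence the near integral is at most $\frac{2\pi}{2-a}\|h\|_{L^\infty(S)}\,r^{2-a}$. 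For the far integral I would use $|x-y|\geq r$ to bound the kernel by $r^{-a}$, giving at most $r^{-a}\|h\|_{L^1(S)}$.

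Combining the two estimates yields
$$\int_S \frac{h(y)}{|x-y|^a}\,dy \leq \frac{2\pi}{2-a}\|h\|_{L^\infty(S)}\,r^{2-a} + r^{-a}\|h\|_{L^1(S)}.$$
The final step is to optimize in $r$. Choosing $r = \bigl(\|h\|_{L^1(S)}/\|h\|_{L^\infty(S)}\bigr)^{1/2}$, which balances the two terms, makes each of them a constant multiple of $\|h\|_{L^1(S)}^{1-a/2}\|h\|_{L^\infty(S)}^{a/2}$, which is exactly the claimed bound, with $C$ depending only on $a$. If either norm vanishes the inequality is trivial, so one may assume both are positive.

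I do not anticipate a genuinely hard step here: the only points requiring care are keeping track of the exponents during the optimization and noting that finiteness of the near-field kernel integral relies on the two-dimensional setting together with the restriction $a<2$.
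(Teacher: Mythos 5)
Your proof is correct: the decomposition into $B(x,r)$ and its complement, the polar-coordinate evaluation of the near-field kernel integral (finite since $a<2$ in dimension two), and the balancing choice $r=\bigl(\|h\|_{L^1(S)}/\|h\|_{L^\infty(S)}\bigr)^{1/2}$ yield exactly the stated bound with $C=1+\frac{2\pi}{2-a}$, and you correctly dispose of the degenerate case where a norm vanishes. The paper itself gives no proof of this lemma — it simply recalls it from the cited reference of Iftimie — and your argument is precisely the standard proof of that quoted result, so it matches the intended approach rather than diverging from it.
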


The goal of this subsection is to find a velocity estimate thanks to the explicit formula of $u^\e$ in function of $\o^\e$ and $\g$ (Subsection \ref{evanescent}):
\begin{equation}
\label{u_e}
u^\e(x,t)=\dfrac{1}{2\pi}DT_\e^t(x)(I_1+I_2)+\a H^\e(x)
\end{equation} 
with
\begin{equation}\label{I_1}
I_1= \int_{\Pi_\e}\dfrac{(T_\e(x)-T_\e(y))^\perp}{|T_\e(x)-T_\e(y)|^2}\o^\e(y,t)dy,
\end{equation}
and
\begin{equation}\label{I_2}
I_2=-\int_{\Pi_\e}\dfrac{(T_\e(x)- T_\e(y)^*)^\perp}{|T_\e(x)- T_\e(y)^*|^2}\o^\e(y,t)dy.
\end{equation}

We begin by estimating $I_1$ and $I_2$.

\begin{lemma}\label{I_est} Let $a\in (0,2)$ and $h:\Pi_\e\to\R_+$ be a function in $L^1(\Pi_\e)\cap L^\infty(\Pi_\e)$. We introduce
\begin{equation}\label{I_1a} 
I_{1,a}= \int_{\Pi_\e}\dfrac{|h(y)|}{|T_\e(x)-T_\e(y)|^a}dy
\end{equation}
\begin{equation}\label{I_2a} 
\tilde I_{2}= \int_{\Pi_\e}\dfrac{|h(y)|}{|T_\e(x)- T_\e(y)^*|}dy.
\end{equation}
There exists a constant $C>0$ depending only on the shape of $\G$, such that
$$|I_{1,a}|\leq C \|h\|_{L^1}^{1-a/2} \|h\|_{L^\infty}^{a/2} \text{\ and\ } |\tilde I_2|\leq C (\|h\|_{L^1}^{1/2} \|h\|_{L^\infty}^{1/2}+  \|h\|_{L^1}).$$
\end{lemma}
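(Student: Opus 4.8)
The plan is to estimate the two integrals separately, exploiting the change of variables $z=T_\e(y)$ that converts the singular kernels into standard Riesz-type potentials on $D^c$, at which point Lemma \ref{4.1} applies. For $I_{1,a}$, I would push the integral forward under the biholomorphism $T_\e$, writing $y=T_\e^{-1}(z)$ with Jacobian $\det(DT_\e^{-1})$. Setting $\tilde h(z)=h(T_\e^{-1}(z))\det(DT_\e^{-1})(T_\e^{-1}(z))$, the integral becomes $\int_{D^c}\frac{\tilde h(z)}{|T_\e(x)-z|^a}\,dz$. Assumption \ref{3.1}(ii) guarantees $\det(DT_\e^{-1})$ is bounded uniformly in $\e$, so $\|\tilde h\|_{L^1(D^c)}\leq C\|h\|_{L^1(\Pi_\e)}$ and $\|\tilde h\|_{L^\infty(D^c)}\leq C\|h\|_{L^\infty(\Pi_\e)}$; applying Lemma \ref{4.1} with $S=D^c$ (or $\R^2$, extending $\tilde h$ by zero) and the fixed point $T_\e(x)$ then yields the bound $|I_{1,a}|\leq C\|h\|_{L^1}^{1-a/2}\|h\|_{L^\infty}^{a/2}$, with $C$ independent of $x$ and $\e$.

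For $\tilde I_2$, the same change of variables gives $\int_{D^c}\frac{\tilde h(z)}{|T_\e(x)-z^*|}\,dz$ where $z^*=z/|z|^2\in D$. The difficulty here is that as $z$ ranges over $D^c$, the reflected point $z^*$ ranges only over the punctured disk $D\setminus\{0\}$, so the singularity $z^*=T_\e(x)$ is attained only when $T_\e(x)\in D$, which never happens since $T_\e(x)\in D^c$ for $x\in\Pi_\e$. Thus the kernel is actually bounded away from its singularity for most of the domain, but I still need a uniform estimate covering the regime $|z|$ large (where $z^*\to 0$ and the kernel behaves like $1/|T_\e(x)|$, a harmless bounded factor) and the regime $|z|$ near $1$ (where $z^*$ approaches the unit circle and can sit close to $T_\e(x)$ when $x$ is near $\G_\e$). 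The natural split is to decompose $D^c$ into an annular region near $S$ and the far region: on the far region the kernel is bounded, contributing the $\|h\|_{L^1}$ term, while near $S$ one again invokes Lemma \ref{4.1} with $a=1$ after noting $dz$ is comparable to $dz^*$ there, producing the $\|h\|_{L^1}^{1/2}\|h\|_{L^\infty}^{1/2}$ term.

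I expect the main obstacle to be making the $\tilde I_2$ estimate genuinely uniform in $\e$ and in $x$, because the reflection $z\mapsto z^*$ is the source of all the geometric subtlety. Concretely, one must control the Jacobian of the map $z\mapsto z^*$ on the relevant annulus and verify that the pushforward measure of $\tilde h$ under reflection still has $L^1$ and $L^\infty$ norms controlled by those of $h$, uniformly in $\e$; this is where Assumption \ref{3.1}(ii) is used a second time. A clean way to organize this is to change variables once more to $w=z^*$, so that the integral near $S$ becomes $\int_{D\setminus\{0\}}\frac{H(w)}{|T_\e(x)-w|}\,dw$ for a rearranged density $H$ whose norms are comparable to those of $h$; then Lemma \ref{4.1} with $a=1$ and $S=D$ applies directly. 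The far-field piece and the boundedness of the extra factor $1/|T_\e(x)|$ follow from $|T_\e(x)|\geq 1$, and assembling the two pieces gives the stated two-term bound. Throughout, the constant $C$ depends only on the uniform bound in Assumption \ref{3.1}(ii), hence only on the shape of $\G$, as claimed.
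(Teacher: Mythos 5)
Your proposal is correct and takes essentially the same approach as the paper: change variables by $T_\e$ with Assumption \ref{3.1}(ii) controlling the Jacobian, apply Lemma \ref{4.1} for $I_{1,a}$, and for $\tilde I_2$ combine the reflection change of variables with a near/far decomposition (bounded kernel on the far region giving the $\|h\|_{L^1}$ term, Lemma \ref{4.1} with $a=1$ on the annulus near $S$ giving the $\|h\|_{L^1}^{1/2}\|h\|_{L^\infty}^{1/2}$ term). The only cosmetic difference is ordering: the paper reflects globally ($\th=\y^*$) and then splits at $|\th|=1/2$, while you split first and reflect only the annular piece.
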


\begin{remark}\label{remark4.3}
It will be clear from the proof that similar estimates hold true with $T_\e$ replaced by $T$.
\end{remark}

\begin{proof}
We start with the $I_{1,a}$ estimate. Let $J=J(\x)\equiv |\det(DT_\e^{-1})(\x)|$ and $z= T_\e(x)$. Making also the change of variables $\y= T_\e(y)$, we find
\begin{equation}
\label{I_1d}
|I_{1,a}|\leq \int_{|\y|\geq 1}\dfrac{1}{|z-\y|^a}|h( T_\e^{-1}(\y))|J(\y)d\y.
\end{equation}

Next, we introduce $f^\e(\y)=|h( T_\e^{-1}(\y))|J(\y)\h_{\{|\y|\geq 1\}}$, with $\h_E$ the characteristic function of the set $E$. Changing variables back, we get 
$$\|f^\e\|_{L^1(\R^2)}=\|h\|_{L^1}.$$
The second point of Assumption \ref{3.1} allows us to write
$$\|f^\e\|_{L^\infty(\R^2)}\leq C\|h\|_{L^\infty}.$$

So we apply the previous Lemma for $f$ and we finally find
\begin{equation}\label{1_a}
|I_{1,a}|\leq \int_{\R^2}\frac{1}{|z-\y|^a}f^\e(\y)d\y\leq C_1\|f^\e\|_{L^1}^{1-a/2}\|f^\e\|_{L^\infty}^{a/2}.
\end{equation}
This concludes the estimate for $I_{1,a}$.

Let us estimate $\tilde I_2$:
\begin{equation*}
|\tilde I_2|\leq \int_{\Pi_\e}\dfrac{1}{|T_\e(x)- T_\e(y)^*|}|h(y)|dy.
\end{equation*}

We use, as before, the notations $J$, $z$ and the change of variables $\y$
\begin{equation}
\label{I_2d}
|\tilde I_2| \leq  \int_{|\y|\geq 1}\dfrac{1}{|z- \y^*|}|h(T_\e^{-1}(\y))|J(\y)d\y.
\end{equation}

Next, we again change variables writing $\th=\y^*$, to obtain:
\begin{eqnarray*}
|\tilde I_2| &\leq&  \int_{|\th|\leq 1}\frac{1}{|z-\th|}|h( T_\e^{-1}(\th^*))|J(\th^*)\frac{d\th}{|\th|^4} \\
&\leq&  \Bigl(\int_{|\th|\leq 1/2}+\int_{1/2 \leq|\th|\leq 1}\Bigl)\equiv I_{21}+I_{22}
\end{eqnarray*}

First we estimate $I_{21}$. As $z= T_\e(x)$, one has that $|z|\geq 1$, and if $|\th|\leq 1/2$ then $|z-\th|\geq 1/2$. Hence 
\begin{eqnarray}
|I_{21}| &\leq&  \int_{|\th|\leq 1/2}2 |h( T_\e^{-1}(\th^*))|J(\th^*)\frac{d\th}{|\th|^4} \\
&\leq& 2 \int_{|\y|\geq 2 }|h( T_\e^{-1}(\y))|J(\y)d\y\leq 2 \|h\|_{L^1}\label{I_21}
\end{eqnarray}

Finally, we estimate $I_{22}$. Let $g^\e(\th)=|h( T_\e^{-1}(\th^*))|\frac{J(\th^*)}{|\th|^4}$. We have
$$I_{22}=\int_{1/2 \leq|\th|\leq 1}\frac{1}{|z-\th|}g^\e(\th)d\th .$$
As above, we deduce by changing variables back that
$$\|g^\e\|_{L^1(1/2\leq|\th|\leq 1)}\leq \|h\|_{L^1}.$$
It is also trivial to see that
$$\|g^\e\|_{L^\infty(1/2\leq|\th|\leq 1)}\leq C \|h\|_{L^\infty}.$$
By Lemma \ref{4.1}
\begin{eqnarray}
|I_{22}|&=& \int_{1/2\leq|\th|\leq 1} \frac{1}{|z-\th|}g^\e(\th)d\th \\
&\leq& C \|g^\e\|_{L^1(1/2\leq|\th|\leq 1)}^{1/2} \|g^\e\|_{L^\infty(1/2\leq|\th|\leq 1)}^{1/2} \leq C  \|h\|_{L^1}^{1/2}\|h\|_{L^\infty}^{1/2}.\label{I_22}
\end{eqnarray}
\end{proof}

Since $|T_\e(x)|\geq 1$, one can easily see from (\ref{H}) that $|H^\e(x)|\leq |DT_\e(x)|$. Moreover, applying the previous Lemma with $a=1$ and $h=\o^\e \in L^1\cap L^\infty$, we get that
$$|I_{1}|\leq C \|\o^\e\|_{L^1}^{1/2} \|\o^\e\|_{L^\infty}^{1/2} \text{\ and\ } | I_2|\leq C (\|\o^\e\|_{L^1}^{1/2} \|\o^\e\|_{L^\infty}^{1/2}+  \|\o^\e\|_{L^1}).$$
Thanks to the explicit formula (\ref{u_e}), we can deduce directly the following Theorem:

\begin{theorem}\label{4.2} $u^\e$ is bounded in $L^\infty(\R^+,L^2_{\loc}(\Pi_\e))$ independently  of $\e$. More precisely, there exists a constant $C>0$ depending only on the shape of $\G$ and the initial conditions $\|\o_0\|_{L^1}$, $\|\o_0\|_{L^\infty}$, such that 
$$\|u^\e(.,t)\|_{L^p(S)}\leq C\|DT_\e\|_{L^p(S)}, \text{\ for all\ } p \in [1,\infty] \text{\ and for any subset } S \text{\ of }\Pi_\e. $$
\end{theorem}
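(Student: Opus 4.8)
The plan is to reduce the entire statement to a single pointwise bound of the form $|u^\e(x,t)|\le C|DT_\e(x)|$, valid for all $x\in\Pi_\e$ and all $t\ge 0$ with $C$ independent of $x$, $t$ and $\e$, and then merely raise this inequality to the power $p$ and integrate over $S$. The feature that makes this work is that the estimates of Lemma \ref{I_est} for $I_1$ and $I_2$ are \emph{uniform in $x$}: the point $x$ enters only through $z=T_\e(x)$, which is eliminated by the change of variables $\y=T_\e(y)$, so the resulting bounds depend only on $\|\o^\e\|_{L^1}$, $\|\o^\e\|_{L^\infty}$ and the shape of $\G$. This uniformity is exactly what allows the factor $DT_\e^t(x)$ in \eqref{u_e} to be pulled out of the integral estimate.

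First I would estimate the two singular integrals. Since $|(T_\e(x)-T_\e(y))^\perp|=|T_\e(x)-T_\e(y)|$, and likewise for the starred term, one has $|I_1|\le I_{1,1}$ and $|I_2|\le\tilde I_2$ in the notation of \eqref{I_1a} and \eqref{I_2a} with $h=\o^\e$ and $a=1$. Lemma \ref{I_est} then gives
$$|I_1|\le C\|\o^\e\|_{L^1}^{1/2}\|\o^\e\|_{L^\infty}^{1/2},\qquad |I_2|\le C\bigl(\|\o^\e\|_{L^1}^{1/2}\|\o^\e\|_{L^\infty}^{1/2}+\|\o^\e\|_{L^1}\bigr).$$
Because the vorticity is transported by the flow, $\|\o^\e(\cdot,t)\|_{L^1}=\|\o_0\|_{L^1}$ and $\|\o^\e(\cdot,t)\|_{L^\infty}=\|\o_0\|_{L^\infty}$ for every $t$, so both right-hand sides are dominated by a constant $C$ depending only on the shape of $\G$ and on $\|\o_0\|_{L^1}$, $\|\o_0\|_{L^\infty}$. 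For the harmonic part, $|T_\e(x)|\ge 1$ yields $|H^\e(x)|\le|DT_\e(x)|$ directly from \eqref{H}, while $\a=\g+\int_{\Pi_\e}\o_0$ is a fixed constant, independent of $t$ and (for $\e$ small) of $\e$; hence $|\a H^\e(x)|\le C|DT_\e(x)|$.

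Combining these facts in \eqref{u_e} and using $|DT_\e^t(x)(I_1+I_2)|\le|DT_\e^t(x)|\,|I_1+I_2|=|DT_\e(x)|\,|I_1+I_2|$ (the transpose has the same matrix norm) produces the desired pointwise bound $|u^\e(x,t)|\le C|DT_\e(x)|$ with $C$ independent of $x$, $t$ and $\e$. Raising to the power $p$ and integrating over an arbitrary $S\subset\Pi_\e$ gives $\|u^\e(\cdot,t)\|_{L^p(S)}\le C\|DT_\e\|_{L^p(S)}$, which is precisely the refined estimate. To read off boundedness in $L^\infty(\R^+;L^2_{\loc}(\Pi_\e))$ I would specialize to $S=B(0,R)\cap\Pi_\e$ and invoke Remark \ref{3.2}: point (iii) of Assumption \ref{3.1} forces $DT_\e$ to be bounded in $L^p(B(0,R)\cap\Pi_\e)$ uniformly in $\e$ for $p\le 3$, in particular for $p=2$, so the right-hand side is controlled independently of $\e$ and $t$.

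There is essentially no hard analytic step remaining, since all of the delicate singular-integral work has already been absorbed into Lemma \ref{I_est}. The only points requiring care are the uniformity in $x$ of the bounds on $I_1,I_2$ (which is what lets the argument close) and the observation that the whole estimate is governed by $DT_\e$, whose inverse-square-root blow-up at the endpoints of $\G$ places it in $L^p_{\loc}$ only for $p<4$. This is exactly why the conclusion is stated at the $L^2_{\loc}$ level, and why it suffices here that $p=2<3$.
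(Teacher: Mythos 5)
Your proof is correct and follows essentially the same route as the paper: the paper also applies Lemma \ref{I_est} with $a=1$ and $h=\o^\e$ to get $x$-, $t$- and $\e$-uniform bounds on $I_1$ and $I_2$, bounds $|H^\e(x)|\leq |DT_\e(x)|$ via $|T_\e(x)|\geq 1$, and then reads the pointwise estimate $|u^\e(x,t)|\leq C|DT_\e(x)|$ off the explicit formula \eqref{u_e}, with the vorticity norms controlled by transport. Your write-up merely makes explicit the steps the paper leaves implicit (the transpose having the same norm, the conservation laws, and the use of Remark \ref{3.2} for the $L^2_{\loc}$ conclusion), so there is nothing to correct.
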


The difference with \cite{ift_lop} is that we have an estimate $L^p_{\loc}$ instead of $L^\infty$, but in our case, this estimate concerns all the velocity $u^\e$. It is one of the reason of the use of a different method to the velocity convergence.

\subsection{Cutoff function}\label{cutoff}\

If we want to compare the different velocity and vorticity, the issue is that $u^\e$ and $\o^\e$ are defined on an $\e$-dependent domain. For this reason we extend the velocity and vorticity on $\R^2$ by multiplying by an $\e$-dependent cutoff function for a neighborhood of $\O_\e$. 

Let $\F\in C^\infty(\R)$ be a non-decreasing function such that $0\leq\F\leq 1$, $\F(s)=1$ if $s\geq 2$ and $\F(s)=0$ if $s\leq 1$. Then we introduce 
$$\F^\e=\F^\e(x)=\F\Bigl(\frac{|T_\e(x)|-1}{\e}\Bigl).$$
Clearly $\F^\e$ is $C^\infty(\R^2)$ vanishing in a neighborhood of $\overline{\O^\e}$.

We require some properties of $\na\F^\e$ which we collect in the following Lemma.

\begin{lemma}\label{4.4} The function $\F^\e$ defined above has the following properties:
\begin{itemize}
\item[(a)] $H^\e.\na\F^\e\equiv 0$ in $\Pi_\e$,
\item[(b)] there exists a constant $C>0$ such that the Lebesgue measure of the support of $\F^\e-1$ is bounded by $C\e$.
\end{itemize}
\end{lemma}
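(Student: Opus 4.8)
The plan is to treat the two assertions separately, with (a) being an immediate consequence of the holomorphic (hence conformal) nature of $T_\e$, and (b) a change-of-variables computation.

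For (a) I would start from the representation $H^\e=\frac{1}{2\pi}\na^\perp\log|T_\e|$, which is exactly the formula for $H_{\Pi_\e}$ furnished by Proposition \ref{2.7} applied to the domain $\Pi_\e$ (whose boundary $\G_\e=T_\e^{-1}(S)$ is a smooth Jordan curve). The key observation is that $\F^\e$ depends on $x$ only through the scalar $|T_\e(x)|$: by the chain rule $\na\F^\e=\frac{1}{\e}\F'\bigl(\frac{|T_\e|-1}{\e}\bigr)\na|T_\e|$, so $\na\F^\e$ is everywhere parallel to $\na|T_\e|$, hence to $\na\log|T_\e|$. Since $H^\e$ is a multiple of $\na^\perp\log|T_\e|$, the inner product $H^\e\cdot\na\F^\e$ is proportional to $\na^\perp\log|T_\e|\cdot\na\log|T_\e|$, which vanishes because $v^\perp\cdot v=0$ for every vector $v$. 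Equivalently, one may argue directly from \eqref{H}: using $\na|T_\e|=DT_\e^t T_\e/|T_\e|$ the dot product reduces to $(T_\e^\perp)^t(DT_\e DT_\e^t)T_\e$, and since $T_\e$ is holomorphic $DT_\e DT_\e^t=(\det DT_\e)\,\I$ is a scalar matrix, leaving $(\det DT_\e)\,T_\e^\perp\cdot T_\e=0$. This proves (a).

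For (b) I would first identify the support. Since $0\le\F^\e\le1$, one has $\F^\e(x)\ne1$ only if $\F\bigl(\frac{|T_\e(x)|-1}{\e}\bigr)<1$, which forces $|T_\e(x)|<1+2\e$ on $\Pi_\e$; together with the fact that $\F^\e\equiv0$ on $\O_\e$ this gives
\[
\supp(\F^\e-1)\subset\overline{\O_\e}\ \cup\ T_\e^{-1}\bigl(\{1\le|z|\le1+2\e\}\bigr).
\]
The collar piece is handled by the change of variables $z=T_\e(x)$: its measure equals $\int_{\{1\le|z|\le1+2\e\}}|\det DT_\e^{-1}(z)|\,dz$, which by Assumption \ref{3.1}(ii) is at most $M\,\pi\bigl((1+2\e)^2-1\bigr)\le C\e$, with a constant uniform in $\e$.

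It remains to bound $\meas(\O_\e)$, and this is where I expect the only genuine subtlety to lie, since $T_\e$ carries no information about the interior of the obstacle, so Assumption \ref{3.1} alone is not enough. Here I would invoke that $\O_\e$ is, by construction, a neighborhood of the finite-length $C^2$ curve $\G$ of width $O(\e)$, whence $\meas(\O_\e)\le C\e$; concretely, for the model family $\O_\e=T^{-1}(\{1\le|z|\le1+\e\})$ this follows from the same change of variables applied to $T$, using that $\det DT^{-1}$ is bounded (Proposition \ref{2.2}). Adding the two contributions yields $\meas\bigl(\supp(\F^\e-1)\bigr)\le C\e$. In summary, part (a) is essentially a one-line conformality argument, while part (b) is a routine change of variables for the collar combined with the geometric thinness of the obstacle family for the term $\meas(\O_\e)$.
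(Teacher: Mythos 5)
Your proof of (a) is exactly the paper's: $H^\e$ is a multiple of $\na^\perp|T_\e|$ while $\na\F^\e$ is a multiple of $\na|T_\e|$, so the inner product vanishes; your alternative computation via $DT_\e DT_\e^t=|\det(DT_\e)|\,Id$ is the same identity the paper itself uses later (in the proof of Lemma \ref{4.5}). For (b), the core of your argument --- the change of variables $z=T_\e(x)$, the area of the annulus $\{1\leq|z|\leq1+2\e\}$, and Assumption \ref{3.1}(ii) to bound $|\det DT_\e^{-1}|$ --- is also identical to the paper's proof.

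The one point of divergence is your extra term $\meas(\O_\e)$, and it is worth spelling out what is going on there. The paper simply asserts that $\supp(\F^\e-1)$ is contained in $\{x\in\Pi_\e \,:\, 1\leq|T_\e(x)|\leq1+2\e\}$, i.e.\ it silently discards $\overline{\O_\e}$, on which $\F^\e=0$ and hence $\F^\e-1=-1$. You are right that, for $\F^\e$ viewed as a function on all of $\R^2$, the support also contains $\overline{\O_\e}$, and you are also right that Assumption \ref{3.1} cannot control $\meas(\O_\e)$: it constrains $T_\e$ only on $\Pi_\e$ and says nothing quantitative about the interior of the obstacle. However, your fix --- invoking that $\O_\e$ is ``by construction'' an $O(\e)$-width neighborhood of $\G$ --- is not among the paper's stated hypotheses; it holds for the model family $\O_\e=T^{-1}(B(0,1+\e)\setminus D)$, as you compute, but is nowhere assumed in general, so under the paper's hypotheses alone that term genuinely cannot be bounded by $C\e$. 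The reason this does not damage the paper is that every subsequent use of the lemma (the two extension estimates following it, and Lemmas \ref{4.5}--\ref{4.6}) involves norms taken over $\Pi_\e$ only, so what is actually needed is $\meas\bigl(\supp(\F^\e-1)\cap\Pi_\e\bigr)\leq C\e$, which is exactly what both you and the paper prove. So your collar estimate reproduces the paper's proof, and your additional term reflects a literal reading of the statement that the paper implicitly avoids by interpreting the support relative to $\Pi_\e$; to handle it one must either adopt that interpretation or, as you did, add a thinness hypothesis on $\O_\e$ that the paper does not state.
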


\begin{proof}
First, we remark that
$$H^\e(x)=\frac{1}{2\pi}\na^\perp \log |T_\e(x)|=\frac{1}{2\pi|T_\e(x)|}\na^\perp |T_\e(x)|,$$
and
\begin{equation}\label{Fi}
\na \F^\e=\frac{1}{\e}\F'\Bigl(\frac{|T_\e(x)|-1}{\e}\Bigl)\na |T_\e(x)|
\end{equation}
what gives us the first point.

Finally, the support of $\F^\e -1$ is contained in the subset $\{ x\in\Pi_\e | 1  \leq |T_\e(x)| \leq 1 + 2\e\}$. The Lebesgue measure can be estimated as follows:
$$\int_{ 1  \leq |T_\e(x)| \leq 1 + 2\e}dx=\int_{1  \leq |z| \leq 1 + 2\e} |\det(DT_\e^{-1})|(z) dz \leq C_1 \e$$
for $\e$ small enough.
\end{proof}

We introduced the cutoff function $\F^\e$ in order to extend the velocity and the vorticity to $\mathbb{R}^2$. One needs to make sure that the limit velocity and vorticity are not affected by the way the extension is constructed. We observe that our method of extension does not produce an error in the limit velocity and vorticity. Indeed, we denote by $ \tilde u^\e$, respectively $\tilde\o^\e$, the extension of $u^\e$, respectively $\o^\e$, by 0 inside the obstacle and we prove that $\displaystyle\lim_{\e\to 0} \tilde u^\e = \lim_{\e\to 0} \F^\e u^\e$ and $\displaystyle\lim_{\e\to 0} \tilde\o^\e = \lim_{\e\to 0} \F^\e \o^\e$ in $D'(\R^2)$. Indeed, using Theorem \ref{4.2} and Remark \ref{3.2}, point (b) of the previous Lemma allows us to state that 
$$\|\F^\e u^\e - u^\e\|_{L^2(\Pi_\e)} \leq C \|DT_\e\|_{L^3(\supp(\F^\e-1))} |C\e|^{1/6}$$
and 
$$\|\F^\e \o^\e - \o^\e\|_{L^p(\Pi_\e)} \leq C \|\o_0\|_{L^\infty} |C\e|^{1/p}$$
for all $p\in [1,\infty)$.

In the case where the limit is a point (\cite{ift_lop}), the Lebesgue measure of the support of $\na \F^\e$ is bounded by $C\e^2$, which implies that the norm $L^1$ of this gradient tends to $0$. Moreover, the authors use a part of velocity $v^\e$ bounded independently of $\e$, so they can compute the limit of  $v^\e . \na \F^\e$ and  $v^\e . \na^\perp \F^\e$ which is necessary for the calculation of the curl and div. Finally they conclude thanks to the Div-Curl Lemma. 

In our case, for $1\leq p< 4$ we have $\|\na\F^\e\|_{L^p}\leq C_p/\e$, and we can not compute the limit of $u^\e .\na^\perp\F^\e$. For this reason we can not use a similar proof as in \cite{ift_lop}.

However, the following Lemma gives us a piece of information about the limit behavior.

\begin{lemma}\label{4.5} $u^\e .\na\F^\e \to 0$ strongly in $L^1(\R^2)$ and uniformly in time, when $\e\to 0$.
\end{lemma}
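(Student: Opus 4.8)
The plan is to split the velocity as $u^\e = \frac{1}{2\pi}DT_\e^t(I_1+I_2) + \a H^\e$ and exploit the decomposition term by term, noting at the outset that point (a) of Lemma \ref{4.4} kills the harmonic contribution entirely: $H^\e.\na\F^\e \equiv 0$, so $u^\e.\na\F^\e = \frac{1}{2\pi}(DT_\e^t(I_1+I_2)).\na\F^\e$ and we only need to control the Biot-Savart part. The essential feature to exploit is the structure of $\na\F^\e$ from \eqref{Fi}: it is supported in the thin annular region $\{1 \leq |T_\e(x)| \leq 1+2\e\}$, whose Lebesgue measure is $O(\e)$ by point (b) of Lemma \ref{4.4}, and on this support it is comparable to $\frac{1}{\e}|\na|T_\e(x)||$, hence of size $O(\frac{1}{\e}|DT_\e(x)|)$.

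First I would write, using H\"older with the measure estimate,
\begin{equation*}
\|u^\e.\na\F^\e\|_{L^1(\R^2)} \leq \int_{\supp\na\F^\e} |u^\e||\na\F^\e|\,dx \leq \frac{C}{\e}\int_{\supp\na\F^\e} |u^\e||DT_\e|\,dx.
\end{equation*}
The factor $1/\e$ is dangerous, so the decay must come from the smallness of $u^\e$ precisely in the region adjacent to the boundary where $|T_\e(x)|$ is close to $1$. The key observation is that on $\G_\e$ the velocity is tangent and the normal component vanishes; more usefully, on the support of $\na\F^\e$ the point $T_\e(x)$ lies in the annulus $1 \leq |\y| \leq 1+2\e$, and in the $\y$-variable the Biot-Savart kernel built from $\frac{(z-\y)^\perp}{|z-\y|^2} - \frac{(z-\y^*)^\perp}{|z-\y^*|^2}$ has its singular and reflected parts nearly cancelling when $|z|\to 1$, since $\y^* \to \y$ as $|\y|\to 1$. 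I would therefore change variables $z=T_\e(x)$, $\y=T_\e(y)$ and estimate the integrand of $I_1+I_2$ by $\frac{|z-\y^*|^{?}}{\cdots}$ using relation \eqref{frac}, showing that on the annulus the combined kernel is bounded (no $1/\e$ blow-up in the $z$-variable) by something integrable against $\o^\e$, so that $|I_1+I_2|(x) \leq C$ uniformly for $x$ in the support of $\na\F^\e$, with $C$ depending only on $\|\o_0\|_{L^1}, \|\o_0\|_{L^\infty}$.

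Granting such a uniform bound on $|I_1+I_2|$ near the boundary, the remaining estimate becomes
\begin{equation*}
\|u^\e.\na\F^\e\|_{L^1(\R^2)} \leq \frac{C}{\e}\int_{\supp\na\F^\e} |DT_\e(x)|^2\,dx \leq \frac{C}{\e}\,\meas(\supp\na\F^\e)^{1/3}\,\|DT_\e\|_{L^3(\supp\na\F^\e)}^2,
\end{equation*}
by H\"older with exponents $3/2$ and $3$; using point (b) of Lemma \ref{4.4} together with Remark \ref{3.2} (which bounds $\|DT_\e\|_{L^3}$ uniformly), the right side is $\frac{C}{\e}(C\e)^{1/3} = C\e^{-2/3}$, which does \emph{not} tend to zero. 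This is the crux: the naive bound fails, and the genuine gain must come from the cancellation of the kernel near $S$, which should actually produce a factor $|T_\e(x)|-1 = O(\e)$ on the support of $\na\F^\e$, beating the $1/\e$. I expect the main obstacle to be making this cancellation quantitative: one must show $|DT_\e^t(I_1+I_2)|(x) = O\bigl(|T_\e(x)|-1\bigr)$ (equivalently $O(\e)$) on $\supp\na\F^\e$ by expanding the difference of the two kernel terms and using $|\y^*-\y| = (|\y|^2-1)/|\y| = O(\e)$, so that the surviving estimate reads $\frac{C}{\e}\cdot\e\cdot\int_{\supp\na\F^\e}|DT_\e|\,dx \leq C\|DT_\e\|_{L^3}\,\meas(\supp\na\F^\e)^{2/3} \leq C\e^{2/3} \to 0$. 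Uniformity in time is automatic since all bounds use only the conserved quantities $\|\o^\e(.,t)\|_{L^1}=\|\o_0\|_{L^1}$ and $\|\o^\e(.,t)\|_{L^\infty}=\|\o_0\|_{L^\infty}$.
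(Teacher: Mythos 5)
You correctly strip out the harmonic part via Lemma \ref{4.4}(a), you correctly see that the naive bound $\|u^\e.\na\F^\e\|_{L^1}\leq \frac{C}{\e}\int|u^\e||DT_\e|$ fails, and you correctly locate the missing gain in a cancellation tied to tangency at the boundary — this is indeed the mechanism of the paper's proof. But the quantitative core of your argument is wrong in two places. First, the estimate you propose to prove, $|DT_\e^t(I_1+I_2)|(x)=O(|T_\e(x)|-1)$ on $\supp\na\F^\e$, is false: near $\G_\e$ the velocity is tangent to the boundary, not small, so the \emph{vector} $I_1+I_2$ is $O(1)$ there; only its component along $\na\F^\e$ (the normal direction) vanishes as $x\to\G_\e$. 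The correct statement is scalar, and the paper obtains it by using holomorphy ($DT_\e DT_\e^t=|\det DT_\e|\,Id$) to reduce $u^\e.\na\F^\e$ to an integral of
$$A=\dfrac{\y.z^\perp/|z|}{|z-\y|^2}-\dfrac{\y^*.z^\perp/|z|}{|z-\y^*|^2}
=\frac{(|z|^2-1)(1-1/|\y|^2)}{|z-\y|^2|z-\y^*|^2}\,\y.\frac{z^\perp}{|z|},$$
with $z=T_\e(x)$, $\y=T_\e(y)$; the factor $(|z|^2-1)$ is the tangency cancellation. Your attribution of the cancellation to ``$\y^*\to\y$ as $|\y|\to1$'' is also off the mark: $\y$ ranges over the image of the vorticity support, which need not be near the unit circle, and the identity above shows the gain comes from $|z|\to1$, not from $\y^*$ approaching $\y$.

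Second, even once the factor $(|z|^2-1)=O(\e)$ is in hand, you cannot simply multiply it against a uniform bound: the reflected kernel $|z-\y^*|^{-2}$ can itself be as large as $(|z|-1)^{-2}=O(\e^{-2})$, because at positive times the transported vorticity may approach the obstacle, so $|\y|$ can be arbitrarily close to $1$ and $\y^*$ arbitrarily close to the annulus where $z$ lives. (This is also why your closing remark that ``uniformity in time is automatic'' from the conserved $L^1$ and $L^\infty$ norms is too quick — the time-dependent danger is precisely the support of $\o^\e(\cdot,t)$ nearing $\G_\e$.) The paper defuses this by spending the \emph{second} numerator factor $(1-1/|\y|^2)$ through the inequalities $|z-\y^*|\geq 1-1/|\y|$ and $|z-\y^*|\geq |z|-1$, keeping only a fractional power $(|z|-1)^b$ (with $b=1/4$) against $|z-\y^*|^b$, using $|\y.z^\perp/|z||\leq|z-\y|$ to soften the $|z-\y|^{-2}$ singularity, and closing with H\"older and the potential estimates of Lemma \ref{I_est}; the resulting rate is $O(\e^{1/4})$, not the $O(\e^{2/3})$ you predict. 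Without this step your proof does not go through; with it, it becomes the paper's proof.
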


\begin{proof}
Using the explicit formulas (\ref{u_e}), (\ref{I_1}), (\ref{I_2}) and (\ref{Fi}), we write 

\begin{eqnarray*}
 u^\e(x).\na\F^\e(x) & = &  u^{\e\perp}(x).\na^\perp\F^\e(x) \\ 
& = &  -\frac{1}{2\pi\e} \F'\Bigl(\frac{|T_\e(x)|-1}{\e} \Bigl) \int_{\Pi_\e}\Bigl(\dfrac{T_\e(x)-T_\e(y)}{|T_\e(x)-T_\e(y)|^2}-\dfrac{T_\e(x)- T_\e(y)^*}{|T_\e(x)- T_\e(y)^*|^2}\Bigl)\o^\e(y,t)dy \\
&& \times DT_\e(x)DT_\e^t(x)\frac{T_\e(x)^\perp}{|T_\e(x)|}.
\end{eqnarray*}

But $T_\e$ is  holomorphic, so $DT_\e$ is of the form 
$\begin{pmatrix}
a & b \\
-b & a
\end{pmatrix}$
 and we can check that $DT_\e(x)DT_\e^t(x)=(a^2+b^2)Id=|\det(DT_\e)(x)|Id$, so
\begin{eqnarray*}
u^\e(x).\na\F^\e(x) & = & \frac{\F'(\frac{|T_\e(x)|-1}{\e})|\det(DT_\e)(x)|}{2\pi\e|T_\e(x)|} \\
&&\times \int_{\Pi_\e}\Bigl(\dfrac{T_\e(y).T_\e(x)^\perp}{|T_\e(x)-T_\e(y)|^2}-\dfrac{ T_\e(y)^*.T_\e(x)^\perp}{|T_\e(x)- T_\e(y)^*|^2}\Bigl)\o^\e(y,t)dy.
\end{eqnarray*}

We compute the $L^1$ norm, next  we change variables twice $\y=T_\e(y)$ and $z=T_\e(x)$, to have

\begin{eqnarray*}
\|u^\e.\na\F^\e\|_{L^1} & \\
=\frac{1}{2\pi\e} \int_{|z|\geq 1} &\Bigl|\F'\Bigl(\frac{|z|-1}{\e}\Bigl)\Bigl| \Bigl| \displaystyle\int_{|\y|\geq 1} \Bigl(\dfrac{\y.z^\perp/|z|}{|z-\y|^2}-\dfrac{ \y^*.z^\perp/|z|}{|z-\y^*|^2}\Bigl)J(\y)\o^\e(T_\e^{-1}(\y),t)d\y\Bigl|dz,
\end{eqnarray*}
where $J(\y)= |\det(DT_\e^{-1})(\y)|$.

Thanks to Lemma \ref{4.4}, we know that $\Bigl\|\frac{1}{\e}\F'\Bigl(\frac{|z|-1}{\e}\Bigl)\Bigl\|_{L^1}\leq C$. So it is sufficient to prove that
\begin{equation}
\label{tronc}
\Bigl\| \int_{|\y|\geq 1} \Bigl(\dfrac{\y.z^\perp/|z|}{|z-\y|^2}-\dfrac{ \y^*.z^\perp/|z|}{|z- \y^*|^2}\Bigl)J(\y)\o^\e(T_\e^{-1}(\y),t)d\y\Bigl\|_{L^\infty(1+\e\leq |z|\leq 1+2\e)}\to 0
\end{equation}
as $\e\to 0$, uniformly in time.

Let
$$ A=\dfrac{\y.z^\perp/|z|}{|z-\y|^2}-\dfrac{ \y^*.z^\perp/|z|}{|z- \y^*|^2}.$$
We compute
\begin{eqnarray*}
A&=& \Bigl(\dfrac{(|z|^2-2 z.\y/|\y|^2+1/|\y|^2)-1/|\y|^2(|z|^2-2z.\y+|\y|^2)}{|z-\y|^2|z- \y^*|^2}\Bigl)\y.\frac{z^\perp}{|z|} \\
&=& \frac{(|z|^2-1)(1-1/|\y|^2)}{|z-\y|^2|z- \y^*|^2}\y.\frac{z^\perp}{|z|}.
\end{eqnarray*}

We now use that $|z|\geq 1$, to write
$$|z- \y^*|\geq  1-\frac{1}{|\y|}.$$
Moreover, $| \y^*|\leq 1$ allows to have
$$|z- \y^*|\geq |z|-1.$$

We can now estimate $A$ by:

$$|A|\leq \frac{(|z|+1)(1+1/|\y|)(|z|-1)^b}{ |z-\y|^2|z- \y^*|^b} \Bigl|\y.\frac{z^\perp}{|z|}\Bigl|$$
with $0\leq b \leq 1$, to be chosen later. We remark also that $\y.\frac{z^\perp}{|z|}=(\y-z).\frac{z^\perp}{|z|}$ and the Cauchy-Schwarz inequality gives 
$$\Bigl|\y.\frac{z^\perp}{|z|}\Bigl|\leq |\y-z|.$$

We now use the fact that $|z|-1 \leq 2\e$, to estimate (\ref{tronc}):
$$\Bigl| \int_{|\y|\geq 1}A J(\y)\o^\e(T_\e^{-1}(\y),t)d\y \Bigl|\leq (2+2\e).2.(2\e)^b \int_{|\y|\geq 1} \frac{J(\y)|\o^\e(T_\e^{-1}(\y),t)|}{|z-\y||z- \y^*|^b}d\y.$$

In the same way we passed from (\ref{I_1d}) to (\ref{1_a}), we obtain for $p<2$:

$$\Bigl\|\frac{J(\y)^{1/p}|\o^\e(T_\e^{-1}(\y),t)|^{1/p}}{|z-\y|}\Bigl\|_{L^p}=\Bigl(\int_{|\y|\geq 1} \frac{J(\y)|\o^\e(T_\e^{-1}(\y),t)|}{|z-\y|^p}\Bigl)^{1/p} \leq C_p.$$

Moreover, as we passed from (\ref{I_2d}) to (\ref{I_21}) and (\ref{I_22}), we obtain for $bq=1$:

$$ \Bigl\|\frac{J(\y)^{1/q}|\o^\e(T_\e^{-1}(\y),t)|^{1/q}}{|z- \y^*|^b}\Bigl\|_{L^q}=\Bigl(\int_{|\y|\geq 1} \frac{J(\y)|\o^\e(T_\e^{-1}(\y),t)|}{|z-\y^*|}\Bigl)^{1/q}\leq C_q.$$

We choose $b>0$, $1/p+1/q=1$, and using the H\"older inequality we finish the proof. For example if we fix $b=1/4$, $q=4$ and $p=4/3$, we obtain 
$$\|u^\e.\na\F^\e\|_{L^1} \leq C(2+2\e).2.(2\e)^{1/4}C_{4/3}C_{4}$$
which tends to zero when $\e$ tends to zero.
\end{proof}

If the proof is a little bit technical, the idea is natural. On the boundary, the velocity $u^\e$ is tangent to $\G_\e$, whereas $\na \F^\e$ is normal. To see that, we can check that $A=0$ when $x\in \G_\e$ (which means that $|z|=|T_\e(x)|=1$).

Before going to the last section, we derive directly from the PDE a temporal estimate for the vorticity.

\subsection{Temporal estimate}\

If we fix $\T>0$, we remark that there exists $R_1>0$ such that the support of $\o^\e(.,t)$ is contained in $B(0;R_1)$ for all $0\leq t\leq \T$.

To see that, let $R_0$ be such that $B(0;R_0)$ contains  the support of $\o_0$. Equation $(\ref{E_e})$ means that $\o^\e$ is transported by the velocity field $u^\e$ and the trajectory of a particle moving with the flow verifies 
$$\pd_t X = u^\e(X,t).$$
Moreover, Theorem \ref{4.2} states $|u^\e(x)|\leq C |DT_\e(x)|$ and the last point of Assumption \ref{3.1} states that there exist $R>0$ and $C_1>0$ such that  $DT_\e$ is bounded by $C_1 |x| $ outside $B(0,R)$. If a material particle reaches the region $B(0,\max (R_0,R))^c$, its velocity is uniformly bounded by $CC_1 |x|$, and we obtain the following inequality:
$$\pd_t |X|^2 =X.\pd_t X\leq CC_1 |X|^2,$$
that holds true in such a region. Applying Gronwall Lemma, we observe that the trajectory of a material particle is bounded independently of $\e$ (up to the fixed time $\T$). 

\begin{lemma}\label{4.6} There exists a constant $C$, which does not depend on $t\in [0,\T]$ and $\e$ such that
$$\| \F^\e \pd_t \o^\e\|_{H^{-2}} \leq C.$$
\end{lemma}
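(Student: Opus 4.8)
The plan is to estimate $\F^\e \pd_t \o^\e$ by using the transport equation to replace the time derivative by a spatial quantity, and then to bound that in a negative Sobolev norm by pairing against test functions. From the vorticity equation $\pd_t \o^\e = -u^\e.\na\o^\e = -\diver(u^\e \o^\e)$ (using $\diver u^\e = 0$), I would write
\begin{equation*}
\F^\e \pd_t \o^\e = -\F^\e \diver(u^\e\o^\e) = -\diver(\F^\e u^\e\o^\e) + \o^\e\, u^\e.\na\F^\e.
\end{equation*}
The point of moving the cutoff inside the divergence is that the $H^{-2}$ norm of a divergence of an $L^1$-in-space vector field is controlled, and the leftover term $\o^\e\, u^\e.\na\F^\e$ is exactly the kind of quantity that Lemma \ref{4.5} shows is small.

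Next I would test against $\f \in H^2(\R^2)$ with $\|\f\|_{H^2}\leq 1$ and integrate by parts to move the divergence onto $\f$:
\begin{equation*}
\langle \F^\e\pd_t\o^\e,\f\rangle = \int_{\Pi_\e} \F^\e u^\e\o^\e.\na\f\, dx + \int_{\Pi_\e}\o^\e\,(u^\e.\na\F^\e)\,\f\, dx.
\end{equation*}
For the first integral, the support of $\o^\e(\cdot,t)$ lies in the fixed ball $B(0,R_1)$ for $t\in[0,\T]$ (established just before the statement via the Gronwall trajectory bound), so I only integrate over a fixed compact set. There $\|\o^\e\|_{L^\infty}=\|\o_0\|_{L^\infty}$ is conserved, $\|\F^\e u^\e\|_{L^2(B(0,R_1))}$ is bounded independently of $\e$ by Theorem \ref{4.2} together with Remark \ref{3.2} (since $DT_\e$ is bounded in $L^2_{\loc}$ uniformly in $\e$), and $\|\na\f\|_{L^2}\leq 1$. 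A Cauchy-Schwarz estimate then bounds this term by a constant uniform in $\e$ and $t$. For the second integral I use $\|\o^\e\|_{L^\infty}=\|\o_0\|_{L^\infty}$, the Sobolev embedding $H^2(\R^2)\hookrightarrow L^\infty(\R^2)$ to control $\|\f\|_{L^\infty}\leq C$, and Lemma \ref{4.5}, which gives $\|u^\e.\na\F^\e\|_{L^1}\to 0$ uniformly in time; thus this term is in fact not merely bounded but small.

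Taking the supremum over such $\f$ yields the desired uniform bound $\|\F^\e\pd_t\o^\e\|_{H^{-2}}\leq C$. The main technical point to verify carefully is the uniform-in-$\e$ bound on $\|\F^\e u^\e\|_{L^2(B(0,R_1))}$: this is where the extra regularity of the conformal map matters, since Theorem \ref{4.2} only gives $\|u^\e\|_{L^p(S)}\leq C\|DT_\e\|_{L^p(S)}$, and I need the $L^2_{\loc}$ bound on $DT_\e$ from Remark \ref{3.2} (point (iii) of Assumption \ref{3.1}) to close the estimate. The role of $H^{-2}$ rather than $H^{-1}$ is precisely to afford the extra derivative absorbed by the Sobolev embedding, so that pointwise control of $\f$ compensates for only having $\o^\e\,(u^\e.\na\F^\e)$ in $L^1$; I would note that a weaker negative index might not suffice here.
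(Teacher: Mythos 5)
Your proof is correct and follows essentially the same route as the paper's: the identical decomposition $\F^\e\pd_t\o^\e = -\diver(\F^\e u^\e\o^\e) + \o^\e\, u^\e.\na\F^\e$, with the first term controlled via Theorem \ref{4.2}, Remark \ref{3.2} and the uniform support remark, and the second via Lemma \ref{4.5}. Your explicit pairing against $\f\in H^2$ with the embedding $H^2(\R^2)\hookrightarrow L^\infty(\R^2)$ merely spells out what the paper compresses into the remark that $L^1$ and $H^{-1}$ are embedded into $H^{-2}$.
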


\begin{proof} We write the equation verified by  $\F^\e\o^\e$:
\begin{eqnarray*}
\F^\e\pd_t \o^\e &=& -\F^\e u^\e .\na \o^\e \\
&=& -\diver(\F^\e u^\e \o^\e) + \o^\e u^\e .\na \F^\e 
\end{eqnarray*}
which is bounded in $H^{-2}$ for the following reason. Note that $\F^\e$ and $\o^\e$ are uniformly bounded, $u^\e$ is bounded in $L^2(B(0,R_1))$ thanks to Theorem \ref{4.2}, Remark \ref{3.2} and the previous remark. Moreover $ \na \F^\e .u^\e \to 0$ in $L^1$ according to the previous Lemma. We finally conclude, bearing in mind that $L^1$ and $H^{-1}$ are embedded into $H^{-2}$.
\end{proof}

\section{Passing to the limit}

\subsection{Strong compactness in velocity.}\

Fix $\T>0$.
We will need the following Lemmas to the passing to the strong limit $L^2_{\loc}([0,\T]\times \R^2)$ of the sequence $\F^\e u^\e$.

As in the previous subsection, let $R_1>0$ be such that the support of $\o^\e(.,t)$ is contained in $B(0;R_1)$ for all $0\leq t\leq \T$ and $0<\e<\e_0$. 

\begin{lemma}\label{5.1}
For all fixed $x\in \Pi$, there exists $\e_x>0$ such that $x\in \Pi_\e$ for all $\e\leq \e_x$. The two following functions
$$f_{x,\e}(y)=\frac{T_\e(x)-T_\e(y)}{|T_\e(x)-T_\e(y)|^2}$$
$$g_{x,\e}(y)=\frac{T_\e(x)-T_\e(y)^*}{|T_\e(x)- T_\e(y)^*|^2}$$
are bounded in $L^{4/3}(B(0,R_1)\cap \Pi_\e)$ independently of $\e\leq \e_x$ (but not necessarily independent of $x$).

Moreover, 
$$f_{x}(y)=\frac{T(x)-T(y)}{|T (x)-T (y)|^2}$$
and
$$g_{x}(y)=\frac{T (x)-T (y)^*}{|T (x)- T (y)^*|^2}$$
are bounded in $L^{4/3}(B(0,R_1))$.
\end{lemma}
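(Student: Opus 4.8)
The plan is to bound the four functions $f_{x,\e}, g_{x,\e}, f_x, g_x$ in $L^{4/3}$ by controlling two separate effects: the local singularity of the Biot--Savart kernel near $y=x$, and the global behaviour of the composition with the conformal maps. Since $x\in\Pi$ is fixed, by Assumption \ref{3.1}(i) the points $T_\e(x)$ converge to $T(x)$, which lies in $\inte\ D^c$, so $|T_\e(x)|>1$ uniformly for $\e$ small; this keeps $x$ away from the boundary and will be used to handle $g_{x,\e}$.

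First I would treat $f_{x,\e}$. The change of variables $\y=T_\e(y)$, exactly as in the passage from \eqref{I_1d} to \eqref{1_a} in the proof of Lemma \ref{I_est}, transforms the integral of $|f_{x,\e}(y)|^{4/3}$ into
\begin{equation*}
\int_{B(0,R_1)\cap\Pi_\e}\frac{dy}{|T_\e(x)-T_\e(y)|^{4/3}}
=\int_{T_\e(B(0,R_1)\cap\Pi_\e)}\frac{J(\y)}{|T_\e(x)-\y|^{4/3}}\,d\y,
\end{equation*}
with $J=|\det(DT_\e^{-1})|$ bounded by Assumption \ref{3.1}(ii). Because the exponent $4/3<2$, the kernel $|z-\y|^{-4/3}$ is locally integrable in the plane, and the domain of integration is contained in a fixed ball (the image of $B(0,R_1)$ stays bounded by Assumption \ref{3.1}(i)), so the integral is finite and bounded uniformly in $\e\le\e_x$. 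This is essentially an application of Lemma \ref{4.1} with $a=4/3$ and $h=J\,\h_{\{|\y|\ge1\}}$ restricted to the relevant region.

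For $g_{x,\e}$ the denominator involves $T_\e(y)^*$, whose modulus is $\le1$, so the potential singularity $T_\e(x)=T_\e(y)^*$ can only occur if $|T_\e(x)|\le 1$; but we noted $|T_\e(x)|>1$ uniformly, hence $|T_\e(x)-T_\e(y)^*|\ge |T_\e(x)|-1\ge c>0$ stays bounded below. Thus $g_{x,\e}$ is in fact bounded pointwise on $\Pi_\e$ by a constant depending on $x$, and since $B(0,R_1)$ has finite measure the $L^{4/3}$ bound is immediate and uniform in $\e\le\e_x$. The same two arguments apply verbatim to $f_x$ and $g_x$, now using the properties of $T$ itself from Proposition \ref{2.2} (in particular that $DT\in L^p_{\loc}$ for $p<4$, which yields the analogous bound on $J=|\det DT^{-1}|$) in place of Assumption \ref{3.1}; this is exactly the transfer promised in Remark \ref{remark4.3}.

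The main obstacle is keeping track of the domain after the change of variables: one must verify that $T_\e(B(0,R_1)\cap\Pi_\e)$ remains inside a fixed bounded annulus $\{1\le|\y|\le M\}$ with $M$ independent of $\e$, so that the local integrability of $|z-\y|^{-4/3}$ actually delivers an $\e$-uniform constant. This follows from Assumption \ref{3.1}(i), which forces $T_\e\to T$ uniformly on $B(0,R_1)\cap\Pi_\e$ and hence bounds $|T_\e|$ there; the bound on $J$ from (ii) then closes the estimate. Everything else is a routine repetition of the computations already carried out in Lemma \ref{I_est}.
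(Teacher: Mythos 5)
Your proof is correct, and for the unstarred kernels it is exactly the paper's argument: change variables $\y=T_\e(y)$, use Assumption \ref{3.1} (i)--(ii) to keep the image $T_\e(B(0,R_1)\cap\Pi_\e)$ inside a fixed ball with a uniformly bounded Jacobian, and integrate the locally integrable kernel $|z-\y|^{-4/3}$. Where you genuinely diverge is on $g_{x,\e}$ and $g_x$: the paper performs a second change of variables $\th=\y^*$, which sends $\{1\le|\y|\le\tilde R\}$ into the annulus $\{1/\tilde R\le|\th|\le 1\}$ where $|\th|^{-4}\le\tilde R^4$, and then again uses local integrability of $|z-\th|^{-4/3}$; this inversion trick (the same one used for $\tilde I_2$ in Lemma \ref{I_est}) yields a bound that is in fact uniform in $x$. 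Your route --- $|T_\e(y)^*|\le 1$ while $|T_\e(x)|\ge 1+c_x$ for $\e$ small by Assumption \ref{3.1}(i), so the starred kernel is pointwise bounded --- is simpler and perfectly adequate, since the lemma explicitly permits constants depending on $x$; it is the same observation the paper itself makes later in the proof of Lemma \ref{5.4}. What you give up is uniformity: your constant degenerates like $(|T(x)|-1)^{-1}$ as $x$ approaches $\G$, whereas the paper's bound does not (and note you must shrink $\e_x$ so that $|T_\e(x)|-1\ge\frac12(|T(x)|-1)$ holds on the whole range $\e\le\e_x$, which is harmless since the lemma only asserts existence of some $\e_x$). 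One small correction of attribution: for the $\e$-free functions $f_x,g_x$, the boundedness of the Jacobian $J=|\det DT^{-1}|$ comes from the bullet of Proposition \ref{2.2} asserting that $DT^{-1}$ is bounded, not from $DT\in L^p_{\loc}$ for $p<4$ --- the latter controls $DT$, which blows up at the endpoints of $\G$, and gives no pointwise control on $\det DT^{-1}$.
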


\begin{proof} Bearing in mind the properties of $T$ and that $T_\e\to T$ uniformly in $B(0,R_1)$, we know that $T_\e(B(0,R_1))\subset B(0,\tilde R)$, for some $\tilde R>0$ independent of $\e$.

To bound $f_{x,\e}$, we change variables $\y=T_\e(y)$ and denote by $z=T_\e(x)$:

\begin{eqnarray*}
\int_{B(0,R_1)\cap\Pi_\e}\frac{1}{|T_\e(x)-T_\e(y)|^{4/3}}dy &\leq& \int_{1\leq|\y|\leq \tilde R}\frac{|\det(DT_\e^{-1})|(\y)}{|z-\y|^{4/3}}d\y \\
&\leq& 2\pi C \int_0^{|z|+\tilde R}\frac{1}{r^{1/3}}dr \\
&\leq& C_1(x).
\end{eqnarray*}

For the second function, we begin in the same way, next we change again variables with $\th = \y^*$:

\begin{eqnarray*}
\int_{B(0,R_1)\cap\Pi_\e}\frac{1}{|T_\e(x)-T_\e(y)^*|^{4/3}}dy &\leq & \int_{1\leq|\y|\leq \tilde R}\frac{|\det(DT_\e^{-1})|(\y)}{|z-\y^*|^{4/3}}d\y \\
&\leq& C \int_{1/\tilde R\leq |\th|\leq 1}\frac{1}{|z-\th|^{4/3}}\frac{d\th}{|\th|^4} \\
&\leq& \tilde C \int_{|\th|\leq 1}\frac{1}{|z-\th|^{4/3}}d\th \\
&\leq& C_2(x)
\end{eqnarray*}
with $C_2$ not depending on $\e$.

Replacing $T$ by $T_\e$, one can obtain the bounds for $f_x$ and $g_x$.
\end{proof}

We now consider the limit of $\F^\e \o^\e$. As $\F^\e \o^\e\in L^\infty([0,\infty),L^1\cap L^\infty(\R^2))$ and $\F^\e \pd_t \o^\e\in L^\infty_{\loc}([0,\infty),H^{-2}(\R^2))$, we can extract a subsequence such that $\F^\e \o^\e \to \o$ weak-$*$ in  $L^\infty([0,\infty),L^4(\R^2))$ with $\o\in L^\infty([0,\infty),L^1\cap L^\infty(\R^2))$ and $\pd_t\o\in L^\infty([0,\infty),H^{-2}(\R^2))$. Moreover, we want that $\F^\e \o^\e(.,t)\to \o(.,t)$ weak $L^4$ for all $t$. As $\|\F^\e \o^\e(.,t)\|_{L^4(\R^2)}\leq \|\o_0\|_{L^4}<\infty$, and thanks to Alaoglu's Theorem, for all $t$ we can extract a subsequence which verifies $\F^\e \o^\e(.,t)\to \o(.,t)$ weak $L^4$. The problem is that the subsequence depends on the time. Let us look for a common subsequence for all $t$. We observe that $\omega(t)$ is defined for all $t\geq0$. Indeed, $\pd_t\o\in L^\infty([0,\infty),H^{-2}(\R^2))$, so $\o\in C^0([0,\infty),H^{-2}(\R^2))$. Moreover, since $\o\in L^\infty([0,\infty),L^1\cap L^\infty(\R^2))$ we also have that $\omega(t)\in L^p(\R^2)$ for all $t\geq0$ and $\sup\limits_{t\geq0}\|\omega(t)\|_{L^p(\R^2)}<\infty$ for all  $p>1$.

\begin{proposition}\label{5.2}
There exists a subsequence of $\F^\e \o^\e$ (again denoted by $\F^\e \o^\e$) such that $\F^\e \o^\e(.,t)\to \o(.,t)$ weak   $L^4(\R^2)$ for all $t$.
\end{proposition}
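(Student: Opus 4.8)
The plan is to read Proposition \ref{5.2} as an Arzel\`a--Ascoli statement in time, the essential difficulty being that we must produce weak $L^4$ convergence at \emph{every} $t$, not merely at almost every $t$. Two ingredients are available. First, the transport structure of (\ref{E_e}) gives $\|\F^\e\o^\e(.,t)\|_{L^4(\R^2)}\leq\|\o_0\|_{L^4}$ uniformly in $t$ and $\e$, so that each $\F^\e\o^\e(.,t)$ lies in a fixed ball of the reflexive separable space $L^4(\R^2)$. Second, since $\F^\e$ is independent of time, Lemma \ref{4.6} reads $\|\pd_t(\F^\e\o^\e)\|_{H^{-2}}\leq C$ uniformly, and integrating in time yields the equicontinuity estimate
$$\|\F^\e\o^\e(.,t_1)-\F^\e\o^\e(.,t_2)\|_{H^{-2}}\leq C|t_1-t_2|,$$
uniform in $\e$ on every interval $[0,\T]$.

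First I would fix $\T>0$ and a countable family $\{\phi_m\}\subset C_c^\infty(\R^2)$ dense in $L^{4/3}(\R^2)$, each $\phi_m$ being in particular an element of $H^2(\R^2)$. For each fixed $m$ the scalar functions $t\mapsto \langle \F^\e\o^\e(.,t),\phi_m\rangle$ are uniformly bounded, by the $L^4$ bound, and uniformly Lipschitz, since $|\langle g,\phi_m\rangle|\leq\|g\|_{H^{-2}}\|\phi_m\|_{H^2}$ together with the equicontinuity estimate above. The scalar Arzel\`a--Ascoli theorem, followed by a diagonal extraction over $m$ and over $\T\in\N$, then produces a single subsequence along which $\langle \F^\e\o^\e(.,t),\phi_m\rangle$ converges uniformly on each $[0,\T]$, for every $m$.

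Next I would identify these limits with the function $\o$ already constructed. Testing the weak-$*$ convergence $\F^\e\o^\e\to\o$ in $L^\infty([0,\infty);L^4(\R^2))$ against $\chi(t)\phi_m(x)$ with $\chi\in L^1(0,\infty)$, and comparing with the uniform limit obtained above, forces that uniform limit to equal $t\mapsto\langle\o(.,t),\phi_m\rangle$ for almost every $t$; since both sides are continuous in $t$ (the first as a uniform limit of equi-Lipschitz functions, the second because $\o\in C^0([0,\infty);H^{-2})$ and $\phi_m\in H^2$), they agree for \emph{every} $t$. Finally, for an arbitrary $\phi\in L^{4/3}(\R^2)$ and any fixed $t$, I would write $\langle\F^\e\o^\e(.,t)-\o(.,t),\phi\rangle=\langle\F^\e\o^\e(.,t)-\o(.,t),\phi_m\rangle+\langle\F^\e\o^\e(.,t)-\o(.,t),\phi-\phi_m\rangle$; the first term tends to $0$ in $\e$ by the preceding step, and the second is bounded by $\bigl(\|\o_0\|_{L^4}+\sup_t\|\o(.,t)\|_{L^4}\bigr)\|\phi-\phi_m\|_{L^{4/3}}$. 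Letting $\e\to0$ and then $\phi_m\to\phi$ gives $\F^\e\o^\e(.,t)\rightharpoonup\o(.,t)$ weakly in $L^4(\R^2)$ for every $t$.

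The step I expect to be delicate is precisely the passage from ``almost every $t$'' to ``every $t$'': this is where the time-equicontinuity in $H^{-2}$ coming from Lemma \ref{4.6} is indispensable, and where one must use that the weak-$*$ limit $\o$, a priori defined only up to null sets in time, admits a genuinely $H^{-2}$-continuous representative so that its pointwise values are meaningful. Everything else---density of $C_c^\infty$ in $L^{4/3}$, the reflexivity of $L^4$ entering through the uniform bound, and the H\"older estimate on the remainder---is routine. As an alternative one could exploit that $\o^\e(.,t)$ is supported in a fixed ball $B(0,R_1)$ for $t\leq\T$ (temporal estimate), so that $L^4(B(0,R_1))$ embeds compactly into $H^{-2}$; an Aubin--Lions--Simon argument then gives $\F^\e\o^\e\to\o$ strongly in $C([0,\T];H^{-2})$, and the $H^{-2}$ convergence combined with the $L^4$ bound upgrades at once, for each $t$, to weak $L^4$ convergence.
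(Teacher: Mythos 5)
Your proposal is correct and takes essentially the same approach as the paper: both arguments rest on the uniform $L^4$ bound, the temporal equicontinuity in $H^{-2}$ furnished by Lemma \ref{4.6}, a diagonal extraction over a countable set, and the density of $C^\infty_c(\R^2)$ in $L^{4/3}(\R^2)$ to pass to arbitrary test functions. The only organizational difference is that the paper diagonalizes over rational times and then extends to all $t$ by equicontinuity of $t\mapsto\int\f(\F^\e\o^\e-\o)\,dx$, whereas you diagonalize over a countable dense family $\{\phi_m\}$ of test functions via Arzel\`a--Ascoli in time; in doing so you also make explicit the identification of the pointwise-in-time limit with $\o(.,t)$ (via testing the weak-$*$ convergence against $\chi(t)\phi_m(x)$ and using the $H^{-2}$-continuous representative of $\o$), a step the paper asserts rather than proves.
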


\begin{proof}
We can choose a common subsequence for all rational times, by the diagonal extraction, because $\mathbb{Q}$ is countable. We now prove that this subsequence converges for any $t$.

 Let $\f\in C^\infty_c(\R^2)$, and
$$f_\e(t) = \int_{\R^2} \f(\F^\e\o^\e-\o)dx.$$

Then $f_\e\to 0 $ for all $t\in \mathbb{Q}$. Moreover 
$$ f'_\e(t) = \int_{\R^2} \f(\F^\e\pd_t \o^\e-\pd_t \o)dx,$$
which allows us to state that the family $\{f_\e\}$ is equicontinuous, using the temporal estimate (Lemma \ref{4.6}) and that $\f\in H^2$.

Therefore, we have an equicontinuous family which tends to $0$ on a dense subset, so it tends to $0$ for all $t$.

To finish, let $\f\in L^{4/3}(\R^2)$. The set $C^\infty_c$ being dense into $L^{4/3}$, there exists a sequence $\f_n\in C^\infty_c(\R^2)$ which converges to $\f$ in $L^{4/3}$. We introduce  $f_{n,\e}$ in the same way as $f_\e$, replacing $\f$ by $\f_n$. Let $t$ be fixed, we have by the first part
\begin{equation} \label{fn}
\text{for all }n,\  f_{n,\e} \to 0 \text{ as }\e\to 0.
\end{equation}

Moreover
\begin{eqnarray*}
f_\e-f_{n,\e} &=& \int_{\R^2} (\f-\f_n)(\F^\e\o^\e-\o)dx \\
|f_\e-f_{n,\e}| &\leq& (\|\o_0\|_{L^4}+ \sup_{t\geq0}\|\o(t)\|_{L^4})\|\f-\f_n\|_{L^{4/3}}.
\end{eqnarray*}

Therefore $f_\e-f_{n,\e}$ tends to $0$ uniformly in $\e$, which according to (\ref{fn}) allows to conclude that $f_\e\to 0$. This completes the proof.
\end{proof}

\begin{lemma}\label{5.3} The two following functions:
$$f_\e(x)=\int_{\R^2}\frac{(T(x)-T(y))^\perp}{|T(x)-T(y)|^2}(\F^\e(y)\o^\e(y)-\o(y))dy $$
and
$$g_\e(x)=\int_{\R^2}\frac{(T(x)-T(y)^*)^\perp}{|T(x)-T(y)^*|^2}(\F^\e(y)\o^\e(y)-\o(y))dy $$
tend to $0$ as $\e\to 0$ in $L^2([0,\T],L^6_{\loc}(\R^2))$.
\end{lemma}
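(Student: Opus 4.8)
The plan is to prove the convergence for $f_\e$; the argument for $g_\e$ is identical, using the second kernel of Lemma \ref{5.1} and the $\tilde I_2$-part of Lemma \ref{I_est} in place of the corresponding statements for the first kernel. Fix a compact set $K\subset\R^2$; it suffices to show $f_\e\to 0$ in $L^2([0,\T],L^6(K))$, and then let $K$ exhaust $\R^2$. The proof rests on two ingredients: pointwise convergence of $f_\e(x,t)$ to $0$ for a.e. $(x,t)$, and a uniform-in-$\e$ domination, after which two successive applications of the Lebesgue dominated convergence theorem (first in $x$, then in $t$) finish the argument.

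First I would establish the pointwise convergence. Both $\F^\e\o^\e(\cdot,t)$ and its weak limit $\o(\cdot,t)$ are supported in $\overline{B(0,R_1)}$ for $t\in[0,\T]$, so for each fixed $x\in\Pi$ one may write $f_\e(x,t)=\int_{B(0,R_1)}\frac{(T(x)-T(y))^\perp}{|T(x)-T(y)|^2}(\F^\e\o^\e-\o)(y)\,dy$. By Lemma \ref{5.1} the fixed ($\e$-independent) kernel $y\mapsto \chi_{B(0,R_1)}(y)\frac{(T(x)-T(y))^\perp}{|T(x)-T(y)|^2}$ belongs to $L^{4/3}(\R^2)$, and by Proposition \ref{5.2} we have $\F^\e\o^\e(\cdot,t)-\o(\cdot,t)\to 0$ weakly in $L^4(\R^2)$ for every $t$. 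Testing this weak convergence against that kernel yields $f_\e(x,t)\to 0$ for every $x\in\Pi$ and every $t\in[0,\T]$; since $\G$ has zero measure this is pointwise convergence for a.e. $(x,t)$.

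Next I would produce the dominating bound. Set $h^\e=|\F^\e\o^\e|+|\o|$; by the transport bound $\|\F^\e\o^\e(\cdot,t)\|_{L^p}\le\|\o_0\|_{L^p}$ (recall $0\le\F^\e\le1$) and the uniform bounds $\sup_{t}\|\o(t)\|_{L^p}<\infty$ recorded before Proposition \ref{5.2}, the function $h^\e(\cdot,t)$ is bounded in $L^1\cap L^\infty$ uniformly in $\e$ and $t$. Since $\bigl|\frac{(T(x)-T(y))^\perp}{|T(x)-T(y)|^2}\bigr|=\frac{1}{|T(x)-T(y)|}$, Lemma \ref{I_est} applied with $a=1$ (valid with $T$ in place of $T_\e$ by Remark \ref{remark4.3}) gives $|f_\e(x,t)|\leq C\|h^\e\|_{L^1}^{1/2}\|h^\e\|_{L^\infty}^{1/2}\leq C'$, a bound uniform in $\e$, $t$ and $x$. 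In particular $\|f_\e(\cdot,t)\|_{L^6(K)}\leq C'|K|^{1/6}$ for all $\e$ and $t$.

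Finally, for each fixed $t$ the pointwise convergence of the second paragraph together with the uniform pointwise bound by the constant $C'\in L^6(K)$ lets me invoke dominated convergence in $x$ to conclude $\|f_\e(\cdot,t)\|_{L^6(K)}\to 0$. The scalar functions $t\mapsto\|f_\e(\cdot,t)\|_{L^6(K)}^2$ then converge to $0$ pointwise in $t$ and are dominated by the integrable constant $(C')^2|K|^{1/3}$ on $[0,\T]$; a second application of dominated convergence, now in $t$, yields $\int_0^\T\|f_\e(\cdot,t)\|_{L^6(K)}^2\,dt\to 0$, i.e. $f_\e\to 0$ in $L^2([0,\T],L^6(K))$. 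The point requiring the most care is the simultaneous uniformity (in $x$, $t$, and $\e$) of the bound in the third step, since this is exactly what makes both dominated-convergence arguments legitimate; the remainder is a routine combination of the weak convergence of Proposition \ref{5.2} with the kernel estimates already contained in Lemmas \ref{5.1} and \ref{I_est}.
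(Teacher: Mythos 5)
Your proof is correct and follows essentially the same route as the paper: pointwise convergence of $f_\e(x,t)$ from the weak $L^4$ convergence of Proposition \ref{5.2} tested against the $L^{4/3}$ kernel of Lemma \ref{5.1}, a uniform bound in $x$, $t$, $\e$ from Lemma \ref{I_est} (with $T$ in place of $T_\e$ via Remark \ref{remark4.3}), and two successive applications of dominated convergence, first in $x$ then in $t$. The only differences are cosmetic (you make the domination function $h^\e$ and the measure-zero role of $\G$ explicit), so nothing further is needed.
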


\begin{proof} Let $K$ be a compact set of $\R^2$. Firstly, we fix $t\in [0,\T]$ and we prove that the norm $L^6(K)$ of $f_\e$ tends to $0$.

For all $x\in K\setminus \G$, $$f_\e(x)=\int_{\R^2}f_x(y)^\perp (\F^\e(y)\o^\e(y)-\o(y))dy,$$
with $f_x$ given in Lemma \ref{5.1}. Furthermore, Lemma \ref{5.1} states that  $f_x$ is bounded in $L^{4/3}$ and as $\F^\e \o^\e\to \o$ weak   $L^4$, we obtain that for fixed $x$,
$$f_\e(x)\to 0 \text{ as }\e\to 0.$$

Moreover, we can apply Lemma \ref{I_est} (estimate of $I_{1,a}$) to $f_\e$, with $h(y)=\F^\e(y)\o^\e(y)-\o(y)$ and we obtain a bound for $f_\e$ independently of $x$, $t$ and $\e$.

Then,  $f_\e^6 \to 0$ almost everywhere as $\e\to 0$, and $|f_\e^6|$ is uniformly bounded. We can apply the dominated convergence theorem to conclude that for fixed $t$ and $K$ a bounded set
$$\int_K |f_\e(x)|^6 dx \to 0.$$

We now let $t$ vary and we apply again the dominated convergence theorem to obtain the result on $f_\e$.

Using the estimate of $\tilde I_2$ in Lemma \ref{I_est}, we proceed in the same manner to prove the result for $g_\e$.
\end{proof}

Moreover, we need two last Lemmas which are a consequence of the convergence of $T_\e$ to $T$ (Assumption \ref{3.1}).

\begin{lemma}\label{5.4} For all fixed $x\in \Pi$,
$$ \frac{(T_\e(x)-T_\e(y)^*)^\perp}{|T_\e(x)-T_\e(y)^*|^2}-\frac{(T(x)-T(y)^*)^\perp}{|T(x)-T(y)^*|^2}\to 0 \text{\ as\ }\e\to 0$$
in $L^{4/3}(B(0,R_1)\cap\Pi_\e)$ (where the norm is taken with respect to $y$).
\end{lemma}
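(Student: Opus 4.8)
The plan is to reduce the whole statement to the elementary identity \eqref{frac} and then exploit the geometric fact that, since $x$ is a \emph{fixed} point of $\Pi$, its image $T(x)$ lies strictly outside the closed unit disk while the points $T(y)^*$ all lie inside it; consequently the two vectors entering the denominators never collide, and the kernel is nonsingular. First I would discard the orthogonal rotations: since $w\mapsto w^\perp$ is an isometry of $\R^2$, the modulus of the difference in the statement equals
\[
\Bigl| \frac{T_\e(x)-T_\e(y)^*}{|T_\e(x)-T_\e(y)^*|^2}-\frac{T(x)-T(y)^*}{|T(x)-T(y)^*|^2}\Bigr|,
\]
and applying \eqref{frac} with $a=T_\e(x)-T_\e(y)^*$ and $b=T(x)-T(y)^*$ rewrites this quantity as
\[
\frac{\bigl|(T_\e(x)-T(x))-(T_\e(y)^*-T(y)^*)\bigr|}{|T_\e(x)-T_\e(y)^*|\,|T(x)-T(y)^*|}.
\]

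The key step is a lower bound on the denominator that is uniform in $y$. Because $|T(y)|\ge 1$ for every $y\in\Pi$ one has $|T(y)^*|\le 1$, whereas $x\in\Pi$ gives $|T(x)|>1$; hence $|T(x)-T(y)^*|\ge |T(x)|-1=:c_x>0$, a constant independent of $y$. By Assumption \ref{3.1}(i), for $\e$ small enough $|T_\e(x)|\ge 1+c_x/2$ while $|T_\e(y)^*|\le 1$, so in the same way $|T_\e(x)-T_\e(y)^*|\ge c_x/2$. Thus the full denominator is bounded below by $c_x^2/2$, uniformly in $y$ and for all small $\e$, which is exactly what removes any singularity.

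It then remains to control the numerator in $L^{4/3}(B(0,R_1)\cap\Pi_\e)$ with respect to $y$. The contribution $|T_\e(x)-T(x)|$ does not depend on $y$ and tends to $0$ by Assumption \ref{3.1}(i), so integrated over the bounded set $B(0,R_1)$ its $L^{4/3}$ norm is $|T_\e(x)-T(x)|\,|B(0,R_1)|^{3/4}\to 0$. For the second piece I would apply \eqref{frac} once more to get $|T_\e(y)^*-T(y)^*|=|T_\e(y)-T(y)|/(|T_\e(y)||T(y)|)\le |T_\e(y)-T(y)|$, whose $L^{4/3}(B(0,R_1)\cap\Pi_\e)$ norm is dominated by $\|T_\e-T\|_{L^\infty(B(0,R_1)\cap\Pi_\e)}\,|B(0,R_1)|^{3/4}$ and hence tends to $0$, again by Assumption \ref{3.1}(i). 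Combining the uniform lower bound on the denominator with these two estimates and the triangle inequality gives the desired convergence.

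The only genuinely delicate point is the uniform-in-$y$ lower bound on the denominator; it is precisely what distinguishes this nonsingular term (the one built from $y^*$) from the singular kernel $T(x)-T(y)$, and it rests entirely on the separation between the exterior point $T(x)$ and the interior points $T(y)^*$. Once that separation is secured, no fine integrability analysis is needed and everything follows from the uniform convergence $T_\e\to T$ on bounded sets.
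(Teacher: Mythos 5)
Your proposal is correct, and its core is the same as the paper's: both reduce the statement via the identity \eqref{frac} applied to $a=T_\e(x)-T_\e(y)^*$, $b=T(x)-T(y)^*$, and both rest on the same key geometric point, namely the uniform-in-$y$ lower bound $|T(x)-T(y)^*|\geq |T(x)|-1>0$ and $|T_\e(x)-T_\e(y)^*|\geq |T_\e(x)|-1\geq \tfrac{1}{2}(|T(x)|-1)$ for $\e$ small, which is exactly how the paper removes the singularity. Where you diverge is the final step: the paper observes that the integrand $h_{x,\e}(y)$ tends to $0$ pointwise (by Assumption \ref{3.1}(i)) and is uniformly bounded, then concludes by the dominated convergence theorem; you instead split the numerator by the triangle inequality into $|T_\e(x)-T(x)|$ and $|T_\e(y)^*-T(y)^*|$, control the latter by a second application of \eqref{frac} together with $|T_\e(y)|,|T(y)|\geq 1$ to get $|T_\e(y)^*-T(y)^*|\leq |T_\e(y)-T(y)|$, and bound the whole $L^{4/3}$ norm by $C_x\,\|T_\e-T\|_{L^\infty(B(0,R_1)\cap\Pi_\e)}\,|B(0,R_1)|^{3/4}$. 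This buys you a fully quantitative estimate with an explicit rate in terms of the uniform convergence of Assumption \ref{3.1}(i), and it sidesteps the mild technicality of invoking dominated convergence on the $\e$-dependent domain $B(0,R_1)\cap\Pi_\e$; the paper's route needs slightly less bookkeeping but only yields convergence without a rate. Both arguments are complete and valid.
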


\begin{proof} Let $x$ be fixed. Using the relation (\ref{frac}) we have
$$\int_{B(0,R_1)\cap\Pi_\e} \Bigl(\frac{|(T(x)-T_\e(x))-(T(y)^*-T_\e(y)^*)|}{|T(x)-T(y)^*||T_\e(x)-T_\e(y)^*|} \Bigl)^{4/3}dy\equiv \int_{B(0,R_1)\cap\Pi_\e}h_{x,\e}(y)dy.$$

By Assumption \ref{3.1}, we know that $h_{x,\e}(y)\to 0$ pointwise as $\e\to 0$. Moreover, if $x\notin \G$ then $|T(x)| > 1$, and as $|T_\e(x)|\to |T(x)|\neq 1$ we can write
\begin{eqnarray*}
|T(x)-T(y)^*| & \geq& |T(x)|-1 >0 \\
|T_\e(x)-T_\e(y)^*| &\geq& |T_\e(x)|-1 \geq  1/2(|T(x)|-1) > 0,
\end{eqnarray*}
for $\e$ small enough (depending on $x$). Then $h_{x,\e}$ can be bounded by a constant which does not depend on $y$ and $\e$, which allows us to apply the dominated convergence theorem to deduce that $\int_{B(0,R_1)\cap\Pi_\e}h_{x,\e}(y)dy\to 0$ as $\e\to 0$.
\end{proof}

\begin{lemma}\label{5.5} One has that
$$ \F^\e(x) \int_{\R^2} \Bigl(\frac{(T_\e(x)-T_\e(y))^\perp}{|T_\e(x)-T_\e(y)|^2}-\frac{(T(x)-T(y))^\perp}{|T(x)-T(y)|^2}\Bigl)\F^\e(y)\o^\e(y)dy\to 0$$
in $L^\infty_{\loc}([0,\infty)\times \R^2)$ as $\e\to 0$.
\end{lemma}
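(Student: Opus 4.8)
The plan is to prove uniform convergence on every set $K\times[0,\T]$ with $K\subset\R^2$ compact, which is exactly convergence in $L^\infty_{\loc}([0,\infty)\times\R^2)$. Writing $D_\e(x,y)$ for the difference of the two kernels in the statement, and using $|\F^\e|\leq1$, $\|\o^\e(\cdot,t)\|_{L^\infty}=\|\o_0\|_{L^\infty}$ together with $\supp\o^\e(\cdot,t)\subset B(0,R_1)$, I would first reduce matters to bounding, uniformly in $x\in K$ and $t\in[0,\T]$, the quantity $\Ec_\e(x,t)=\int_{B(0,R_1)\cap\Pi_\e}|D_\e(x,y)|\,|\o^\e(y,t)|\,dy$; one may assume $\F^\e(x)\neq0$, hence $x\in\Pi_\e$, since otherwise the expression vanishes. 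The key algebraic input is relation (\ref{frac}): as $\x\mapsto\x^\perp$ is an isometry it yields $|D_\e(x,y)|=|(T_\e-T)(x)-(T_\e-T)(y)|\,\bigl(|T_\e(x)-T_\e(y)|\,|T(x)-T(y)|\bigr)^{-1}$, so the numerator is everywhere at most $2\d_\e$, where $\d_\e\equiv\|T_\e-T\|_{L^\infty(B(0,R_1)\cap\Pi_\e)}\to0$ by Assumption \ref{3.1}(i).

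I expect the main obstacle to be the diagonal $y=x$: there each factor of the denominator vanishes and the product behaves like $|x-y|^{-2}$, which is not integrable, so one cannot merely extract the uniform smallness $\d_\e$ of the numerator over the whole ball. To get around this I would split $B(0,R_1)\cap\Pi_\e$ at the scale $\r_\e\equiv\d_\e^{1/2}$ (so that $\r_\e\geq4\d_\e$ for $\e$ small) into a far part $\O_1=\{|T(x)-T(y)|>\r_\e\}$ and a near part $\O_2=\{|T(x)-T(y)|\leq\r_\e\}$, handled by opposite mechanisms.

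On $\O_1$ I would keep the cancellation. There $|T_\e(x)-T_\e(y)|\geq|T(x)-T(y)|-2\d_\e\geq\tfrac{1}{2}|T(x)-T(y)|$, whence $|D_\e|\leq4\d_\e|T(x)-T(y)|^{-2}$. Changing variables $\y=T(y)$ — legitimate since $\Pi_\e$ avoids $\G$, so that $T$ is a diffeomorphism there with $|\det DT^{-1}|$ bounded by Proposition \ref{2.2} — and using $|\o^\e|\leq\|\o_0\|_{L^\infty}$, I would get
$$\int_{\O_1}|D_\e|\,|\o^\e|\,dy\leq C\d_\e\int_{\r_\e\leq|T(x)-\y|\leq R''}\frac{d\y}{|T(x)-\y|^2}\leq C\d_\e\log\frac{1}{\r_\e},$$
which tends to $0$ uniformly on $K\times[0,\T]$, the image radius $R''$ being controlled by $\sup_K|T|$.

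On $\O_2$ I would instead discard the cancellation and use that the domain shrinks. Bounding crudely $|D_\e|\leq|T_\e(x)-T_\e(y)|^{-1}+|T(x)-T(y)|^{-1}$ and applying H\"older with exponents $4/3$ and $4$, the two $L^{4/3}$ norms of the kernels are bounded uniformly for $x\in K$ by Lemma \ref{5.1} (its proof gives a bound depending only on $|T_\e(x)|$, resp.\ $|T(x)|$, which remain bounded on $K$), while $\|\o^\e\1_{\O_2}\|_{L^4}\leq\|\o_0\|_{L^\infty}|\O_2|^{1/4}\leq C\r_\e^{1/2}\to0$, since the same change of variables gives $|\O_2|\leq C\r_\e^2$. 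Adding the two regions yields $\sup_{x\in K,\,t\in[0,\T]}\Ec_\e(x,t)\leq C\bigl(\d_\e\log(1/\d_\e)+\d_\e^{1/4}\bigr)\to0$, as required. The points I would have to check with some care are the uniformity of the Lemma \ref{5.1} bounds as $x$ ranges over $K$, including $x$ near $\G$ or near its endpoints — where $|T(x)|$ nevertheless stays bounded — and the validity of the change of variables up to the boundary; both rest on the regularity of $T$ recorded in Proposition \ref{2.2}.
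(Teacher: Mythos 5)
Your proof is correct, but it takes a genuinely different route from the paper's. Both arguments start from relation \eqref{frac}, which reduces the problem to controlling $\int |(T_\e-T)(x)-(T_\e-T)(y)|\,\bigl(|T_\e(x)-T_\e(y)|\,|T(x)-T(y)|\bigr)^{-1}|\o^\e(y)|\,dy$. The paper never splits the domain: writing $N=|(T_\e-T)(x)-(T_\e-T)(y)|$, it uses that $N$ is bounded both by $2\sup|T_\e-T|$ and by $|T_\e(x)-T_\e(y)|+|T(x)-T(y)|$, hence $N=\sqrt{N}\sqrt{N}\leq \sqrt{2\sup|T_\e-T|}\,\bigl(\sqrt{|T_\e(x)-T_\e(y)|}+\sqrt{|T(x)-T(y)|}\bigr)$; this dominates the integrand by kernels of the form $|T_\e(x)-T_\e(y)|^{-1/2}|T(x)-T(y)|^{-1}$ (plus the symmetric one), and a single H\"older estimate with exponents $5/2$ and $5/3$, combined with the $I_{1,a}$ bounds of Lemma \ref{I_est} for $a=5/4$ and $a=5/3$ (which are uniform in $x$, $t$, $\e$), gives the global bound $C\sqrt{\sup|T_\e-T|}$. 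Your near/far decomposition at scale $\r_\e=\d_\e^{1/2}$ replaces this one-line trick by two opposite mechanisms: full cancellation plus a logarithm on the far region, loss of cancellation compensated by the smallness of the region on the near one, yielding the slightly worse but entirely sufficient rate $\d_\e\log(1/\d_\e)+\d_\e^{1/4}$ versus the paper's $\d_\e^{1/2}$. What the paper's argument buys is that uniformity in $x$ comes for free, since Lemma \ref{I_est} carries no $x$-dependence; your argument instead invokes compactness of $K$ two extra times (to bound $|T(x)|$ and $|T_\e(x)|$ in the logarithmic constant and in the Lemma \ref{5.1} bounds), which you correctly flag and justify via Proposition \ref{2.2} and Assumption \ref{3.1}(i). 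One small correction: your $\d_\e$ must be taken as $\|T_\e-T\|_{L^\infty(B(0,R_2)\cap\Pi_\e)}$ with $B(0,R_2)\supset K\cup B(0,R_1)$, since the numerator involves the point $x\in K$, which need not lie in $B(0,R_1)$; this is exactly the paper's choice $R_2=\max(R,R_1)$ in the proof of Lemma \ref{5.5}, and it is harmless because Assumption \ref{3.1}(i) holds for every radius.
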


\begin{proof}
Let $x\in B(0,R)$ and $y\in \supp \o^\e$. Using the relation (\ref{frac}) we can introduce and bound
$$\tilde h_{x,\e}\equiv \frac{|(T_\e(x)-T(x))-(T_\e(y)-T(y))|}{|T_\e(x)-T_\e(y)||T(x)-T(y)|} $$
$$\leq \sqrt{2\sup_{B(0,R_2)\cap \Pi_\e}(|T_\e-T|)}\Bigl(\frac{1}{\sqrt{|T_\e(x)-T_\e(y)|}|T(x)-T(y)|}+\frac{1}{|T_\e(x)-T_\e(y)|\sqrt{|T(x)-T(y)|}}\Bigl)$$

where $R_2=max(R,R_1)$.

Using the Lemma \ref{I_est} with $a=5/4$ and $a=5/3$ we conclude by the H\"older inequality:
\begin{eqnarray*}
\int_{\R^2}\tilde h_{x,\e}(y)\F^\e(y)|\o^\e(y)| dy  &\leq &\sqrt{2\sup_{B(0,R_2)\cap \Pi_\e}(|T_\e-T|)} \Bigl( \Bigl\|\frac{(\o^\e)^{2/5}}{\sqrt{|T_\e(x)-T_\e(y)|}}\Bigl\|_{L^{5/2}} \Bigl\|\frac{(\o^\e)^{3/5}}{|T(x)-T(y)|}\Bigl\|_{L^{5/3}}  \\
&\ & + \Bigl\|\frac{(\o^\e)^{3/5}}{|T_\e(x)-T_\e(y)|}\Bigl\|_{L^{5/3}}\Bigl\|\frac{(\o^\e)^{2/5}}{\sqrt{|T(x)-T(y)|}}\Bigl\|_{L^{5/2}} \Bigl) \\
&\leq& C \sqrt{\sup_{B(0,R_2)\cap \Pi_\e}(|T_\e-T|)}.
\end{eqnarray*}

Therefore, the uniform convergence of $T_\e$ (Assumption \ref{3.1}) allows us to conclude.
\end{proof}

\begin{theorem}\label{5.6}
One has that $\F^\e u^\e \to u$ strongly in $L^2_{\loc}([0,\infty)\times \R^2)$, with
\begin{equation}
  \label{uomega}
u(x)=\dfrac{1}{2\pi} DT^t(x) \int_{\R^2} \Bigl(\dfrac{(T(x)-T(y))^\perp}{|T(x)-T(y)|^2}-\dfrac{(T(x)- T(y)^*)^\perp}{|T(x)- T(y)^*|^2}\Bigl)\o(y,t)dy + \a H(x).
\end{equation}
\end{theorem}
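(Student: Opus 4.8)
The plan is to write $\F^\e u^\e-u$ as a telescoping sum of error terms, each driven to zero in $L^2_{\loc}([0,\T]\times\R^2)$ by one of the preparatory lemmas, and then to let $\T\to\infty$. First I would peel off the harmonic contribution $\a\F^\e H^\e$ and treat it separately, so that it remains to compare the Biot--Savart parts, i.e.\ the expression $\frac{1}{2\pi}\F^\e DT_\e^t\int_{\Pi_\e}(\cdots)\o^\e$ coming from \eqref{u_e} with the corresponding expression in \eqref{uomega}. Reading from one to the other, three ingredients have to be changed: the vorticity ($\o^\e$ on $\Pi_\e$ versus $\o$ on $\R^2$), the conformal map inside the two kernels ($T_\e$ versus $T$), and the prefactor ($DT_\e^t$ versus $DT^t$). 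I would interpolate between the two expressions by changing exactly one ingredient at a time, so that each intermediate difference is controlled by a single mechanism.

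First I would replace $\o^\e$ by $\F^\e\o^\e$, extending the integral to $\R^2$, which is harmless since $\supp\o^\e\subset\Pi_\e$; the resulting error integrand is governed by $(1-\F^\e)\o^\e$, whose $L^1$ norm is $O(\e)$ by Lemma \ref{4.4} while its $L^\infty$ norm stays bounded, so Lemma \ref{I_est} makes the velocity integral $O(\e^{1/2})$ uniformly in $x$ and $t$, and multiplication by $\F^\e DT_\e^t$ (bounded in $L^2_{\loc}$ by Remark \ref{3.2}) kills this term. Next I would swap the prefactor $DT_\e^t\to DT^t$: here the remaining integral is bounded in $L^\infty$ uniformly in $x,t,\e$ (Lemma \ref{I_est} with $h=\F^\e\o^\e$), while $\F^\e(DT_\e-DT)\to 0$ in $L^2_{\loc}$ by Assumption \ref{3.1}(iii), so the product vanishes. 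Then I would swap the kernels $T_\e\to T$: for the principal kernel, Lemma \ref{5.5} furnishes precisely the $L^\infty_{\loc}$ decay of the $\F^\e$-weighted difference, which against $DT^t\in L^2_{\loc}$ gives $L^2_{\loc}$ decay; for the image kernel, Lemma \ref{5.4} gives, for each fixed $x$, decay of the kernel difference in $L^{4/3}_y$, which paired by H\"older with the uniformly $L^4$-bounded $\F^\e\o^\e$ yields pointwise-in-$x$ decay, after which a dominated-convergence argument with the fixed majorant $C|DT^t|\in L^2_{\loc}$ — supplied by the uniform bound of Lemma \ref{I_est} (Remark \ref{remark4.3}) together with the local integrability of $DT$ from Proposition \ref{2.2} — upgrades this to $L^2_{\loc}$.

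After these steps only the vorticity remains to be passed to the limit, which is exactly the content of Lemma \ref{5.3}: writing the leftover error as $\frac{1}{2\pi}\F^\e DT^t(f_\e-g_\e)$, the $L^2([0,\T],L^6_{\loc})$ decay of $f_\e,g_\e$ combined with $DT\in L^3_{\loc}$ closes the estimate by H\"older, since $\frac{1}{6}+\frac{1}{3}=\frac{1}{2}$; finally removing the leftover $\F^\e$ in front costs only a factor supported on a set of measure $O(\e)$ against a majorant $C|DT^t|$, and $|DT|^2$ is uniformly integrable because $DT\in L^p_{\loc}$ for every $p<4$. The harmonic term is handled in parallel: from \eqref{H} one has $\F^\e H^\e\to H$ pointwise almost everywhere and $|\F^\e H^\e|\le|DT_\e|$, and since $|DT_\e|^2$ converges in $L^{3/2}_{\loc}$ by Assumption \ref{3.1}(iii) it is uniformly integrable, so Vitali's theorem gives $\a\F^\e H^\e\to\a H$ in $L^2_{\loc}$. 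Summing the five errors with the harmonic convergence yields $\F^\e u^\e\to u$ in $L^2([0,\T]\times\R^2_{\loc})$ for every finite $\T$, hence in $L^2_{\loc}([0,\infty)\times\R^2)$.

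I expect the main obstacle to be the upgrade from pointwise or weak convergence to \emph{strong} $L^2_{\loc}$ convergence, which must be carried out entirely by dominated convergence rather than by a div--curl argument, the latter being unavailable here as explained in the introduction. The delicate point is that the prefactor $DT$ blows up like the inverse square root of the distance at the endpoints of $\G$ and lies only in $L^p_{\loc}$ for $p<4$; every estimate must therefore be arranged so that the $L^\infty$-in-$x$ bounds on the velocity integrals provided by Lemma \ref{I_est} absorb this singularity with room to spare — this is why the conjugate exponents $6$ and $3$, and $4$ and $4/3$, appear — and so that the majorants needed for dominated convergence are genuinely in $L^2_{\loc}$ and independent of $\e$.
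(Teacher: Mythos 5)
Your proposal is correct and follows essentially the same route as the paper: the paper's proof is precisely such a telescoping decomposition (into nine terms $J_1,\dots,J_9$) driven by Lemma \ref{I_est}, Lemmas \ref{5.1}, \ref{5.3}, \ref{5.4}, \ref{5.5}, Assumption \ref{3.1}, and repeated dominated convergence, with $DT\in L^3_{\loc}$ absorbing the endpoint singularity exactly as you describe. Your only deviations are minor and both valid: for the $(1-\F^\e)\o^\e$ term you use the quantitative bound $\|(1-\F^\e)\o^\e\|_{L^1}=O(\e)$ from Lemma \ref{4.4}(b) together with the interpolation of Lemma \ref{I_est} to get a uniform $O(\e^{1/2})$ estimate, where the paper argues qualitatively via Lemma \ref{5.1}, weak convergence and dominated convergence; and for the harmonic term you invoke Vitali's theorem with the uniformly integrable majorant $|DT_\e|^2$, where the paper instead splits that term ($J_9$) into three pieces controlled directly by points (i) and (iii) of Assumption \ref{3.1}.
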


\begin{proof} We recall the explicit formula for $\F^\e u^\e$:
$$\F^\e u^\e=\dfrac{1}{2\pi} \F^\e(x) DT_\e^t(x) \int_{\Pi_\e} \Bigl(\dfrac{(T_\e(x)-T_\e(y))^\perp}{|T_\e(x)-T_\e(y)|^2}-\dfrac{(T_\e(x)- T_\e(y)^*)^\perp}{|T_\e(x)- T_\e(y)^*|^2}\Bigl)\o^\e(y,t)dy + \a \F^\e(x)H_\e(x).$$

Next we decompose:
\begin{eqnarray*}
(\F^\e u^\e-u)(x)&=&\frac{1}{2\pi} (DT_\e^t(x)-DT^t(x)) \F^\e(x) \int_{\Pi_\e} \Bigl(\dfrac{(T_\e(x)-T_\e(y))^\perp}{|T_\e(x)-T_\e(y)|^2}\\
&& {\hskip 7.5cm} -\dfrac{(T_\e(x)- T_\e(y)^*)^\perp}{|T_\e(x)- T_\e(y)^*|^2}\Bigl)\o^\e(y)dy \\
&+& \frac{1}{2\pi}DT^t(x)\F^\e(x)\int_{\Pi_\e} \Bigl(\dfrac{(T_\e(x)-T_\e(y))^\perp}{|T_\e(x)-T_\e(y)|^2} \\
&& {\hskip 5.5cm} -\dfrac{(T_\e(x)- T_\e(y)^*)^\perp}{|T_\e(x)- T_\e(y)^*|^2}\Bigl)\o^\e(y)(1-\F^\e(y)) dy\\
&+& \frac{1}{2\pi}DT^t(x)\F^\e(x)\int_{\R^2} \Bigl(\frac{(T_\e(x)-T_\e(y))^\perp}{|T_\e(x)-T_\e(y)|^2}-\frac{(T(x)-T(y))^\perp}{|T(x)-T(y)|^2}\Bigl)\F^\e(y)\o^\e(y)dy \\
&+& \frac{1}{2\pi}DT^t(x)(\F^\e(x)-1)\int_{\R^2}\dfrac{(T(x)-T(y))^\perp}{|T(x)-T(y)|^2}\F^\e(y)\o^\e(y)dy\\
&+& \frac{1}{2\pi}DT^t(x)\int_{\R^2}\dfrac{(T(x)-T(y))^\perp}{|T(x)-T(y)|^2}(\F^\e(y)\o^\e(y)-\o(y))dy \\
&-& \frac{1}{2\pi}DT^t(x)\F^\e(x)\int_{\R^2}\Bigl( \frac{(T_\e(x)-T_\e(y)^*)^\perp}{|T_\e(x)-T_\e(y)^*|^2}-\frac{(T(x)-T(y)^*)^\perp}{|T(x)-T(y)^*|^2}\Bigl)\F^\e(y)\o^\e(y) dy \\
&-& \frac{1}{2\pi}DT^t(x)(\F^\e(x)-1)\int_{\R^2}\dfrac{(T(x)-T(y)^*)^\perp}{|T(x)-T(y)^*|^2}\F^\e(y)\o^\e(y)dy \\
&-& \frac{1}{2\pi}DT^t(x)\int_{\R^2}\dfrac{(T(x)-T(y)^*)^\perp}{|T(x)-T(y)^*|^2}(\F^\e(y)\o^\e(y)-\o(y))dy \\
&+& \a \frac{1}{2\pi}\Bigl(\F^\e(x)DT_\e^t(x)\dfrac{T_\e(x)^\perp}{|T_\e(x)|^2}-DT^t(x)\dfrac{T(x)^\perp}{|T(x)|^2}\Bigl)\\
&\equiv& J_1+...+J_9.
\end{eqnarray*}

In every $J_i$, we use the fact that $DT$ is bounded in $L^3_{\loc}$ (see Proposition \ref{2.2}). We also use the estimates of the integrals $I_1$ and $I_2$ independently of $x$, $\e$, $t$ (see Lemma \ref{I_est}).

For $J_4$ and $J_7$, we remark that $(\F^\e(x)-1)\to 0$ in $L^6$, $DT$ is bounded in $L^3_{\loc}$ and the integral is bounded independently of  $x$, $\e$ and $t$ (see Remark \ref{remark4.3}), which is sufficient to conclude that $J_4$ and $J_7$ converge to zero in $L^2_{\loc}([0,\infty)\times \R^2)$.

A similar argument holds true for $J_1$, since $DT_\e\to DT$ in $L^3_{\loc}$ by Assumption \ref{3.1}.

For $J_2$, Lemma \ref{5.1} states that for fixed $x$, the fractions are bounded in $L^{4/3}(B(0,R_1))$ independently of $\e$. Moreover $\o^\e$ is bounded independently of  $t$, $\e$  and  $1-\F^\e(y)\to 0$ in $L^4$. Therefore, for fixed $x\notin \G$, the integral tends pointwise to $0$. Moreover this integral is bounded (see Lemma \ref{I_est}) and using the dominated convergence theorem, we can observe that it tends to $0$ in $L^6_{\loc}$. So, we have the convergence of $J_2$ to zero because $DT$ is bounded in $L^3_{\loc}$.

The convergence of $J_3$ to $0$ is a direct consequence of Lemma \ref{5.5}. Next, $DT$ belongs to $L^3_{\loc}(\R^2)$, and thanks to Lemma \ref{5.3}, we know that the integrals in $J_5$ and $J_8$ tend to zero in $L^2_{\loc}([0,\infty),L^6_{\loc}(\R^2))$. So $J_5$ and $J_8$ tend to zero in $L^2_{\loc}$.

We now go to $J_6$. Applying Lemma \ref{5.4} and reasoning as we did for the second term: for fixed $x$ and $t$, the integral tends pointwise to $0$  because $\o^\e$ is bounded in $L^4$ independently of $t$. Moreover, it is uniformly bounded by Lemma \ref{I_est}, and we can apply twice the dominated convergence theorem to obtain that the integral in $J_6$ converges to $0$ in  $L^2_{\loc}([0,\infty),L^6_{\loc}(\R^2))$. Using again the boundness of $DT$ in $L^3_{\loc}$ we get the desired conclusion for $J_6$.

The convergence of $J_9$ can be done more easily, because $1/|T|\leq 1$. Indeed we can decompose 
\begin{eqnarray*}
J_9 &=& \frac{\a}{2\pi}(DT_\e^t(x)-DT^t(x))\F^\e(x)\dfrac{T_\e(x)^\perp}{|T_\e(x)|^2}\\
&+& \frac{\a}{2\pi}\F^\e(x)DT^t(x)\Bigl(\dfrac{T_\e(x)^\perp}{|T_\e(x)|^2}-\dfrac{T(x)^\perp}{|T(x)|^2}\Bigl)\\
&+& \frac{\a}{2\pi}(\F^\e(x)-1)DT^t(x)\dfrac{T(x)^\perp}{|T(x)|^2},
\end{eqnarray*}
and the convergence to zero of $J_9$ is a direct consequence of points (i) and (iii) of the Assumption \ref{3.1}.
\end{proof}

The previous theorem provides an explicit formula expressing the limit velocity in terms of the limit vorticity. From this formula, we can deduce a few properties of the limit velocity $u$.

\begin{proposition}\label{remark}
Let $u$ be given as in Theorem \ref{5.6}. For fixed $t$, the velocity
\begin{itemize}
\item[i)] is continuous on $\R^2\setminus \G$.
\item[ii)] is continuous up to $\G\setminus \{-1;1\}$, with different values on each side of $\G$.
\item[iii)] blows up at the endpoints of the curve like $C/\sqrt{|x-1||x+1|}$, which belongs to $L^p_{\loc}$ for $p<4$.
\item[iv)] is tangent to the obstacle.
\end{itemize}
\end{proposition}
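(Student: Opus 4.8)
The plan is to read \eqref{uomega} as the composition of the explicit, well--understood map $DT^{t}$ with a Biot--Savart field living on the fixed exterior domain $\inte D^{c}$, and to read the tangency off a stream function. Write $u(x)=DT^{t}(x)\,\tilde W(T(x),t)$ with
\begin{equation*}
\tilde W(z,t)=\frac{1}{2\pi}\int_{\R^{2}}\Bigl(\frac{(z-T(y))^{\perp}}{|z-T(y)|^{2}}-\frac{(z-T(y)^{*})^{\perp}}{|z-T(y)^{*}|^{2}}\Bigr)\o(y,t)\,dy+\frac{\a}{2\pi}\frac{z^{\perp}}{|z|^{2}}.
\end{equation*}
After the change of variables $\eta=T(y)$, whose Jacobian $|\det DT^{-1}|$ is bounded by Proposition \ref{2.2}, $\tilde W(\cdot,t)$ is exactly the Biot--Savart field on the fixed domain $\inte D^{c}$ attached to the bounded, compactly supported density $\bar\o(\eta)=\o(T^{-1}(\eta),t)\,|\det DT^{-1}(\eta)|$ and to the circulation $\a$. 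By standard potential theory for this fixed domain, $\tilde W(\cdot,t)$ is continuous on $\overline{\inte D^{c}}$, in particular up to and including $S$.

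Granting this, properties (i)--(iii) follow from the regularity of $T$ recorded in Proposition \ref{2.2}. On $\Pi=\R^{2}\setminus\G$ the map $DT^{t}$ is holomorphic, hence continuous, and $\tilde W(T(\cdot),t)$ is continuous, which gives (i). For (ii), Proposition \ref{2.2} provides continuous extensions of $T$ and $DT$ up to $\G\setminus\{-1,1\}$ taking different values on the two sides; composing with the continuous extension of $\tilde W$ to $S$ yields a one--sided continuous extension of $u=DT^{t}\,\tilde W(T(\cdot))$, and the two limits differ because those of $DT$ do and because the two sides of $\G$ are carried to distinct points of $S$ (as the maps $\tilde T_{\pm}$ do in the segment model \eqref{Tpm}). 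For (iii), near an endpoint $DT$ blows up like the inverse square root of the distance while $\tilde W(T(\cdot))$ stays bounded, so $|u|\le C/\sqrt{|x-1|\,|x+1|}$; membership in $L^{p}_{\loc}$ for $p<4$ is then the last assertion of Proposition \ref{2.2}.

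The substantial point is (iv), which I would establish through the stream function rather than the velocity formula. Since $\perp$ commutes with the rotation--dilation $DT^{t}$, one has $\nabla_{x}^{\perp}\log|T(x)-c|=DT^{t}(x)(T(x)-c)^{\perp}/|T(x)-c|^{2}$, and hence $u=\nabla^{\perp}\psi$ with
\begin{equation*}
\psi(x)=\int_{\R^{2}}G_{\G}(x,y)\,\o(y,t)\,dy+\frac{\a}{2\pi}\log|T(x)|,
\end{equation*}
where $G_{\G}$ is the analogue of the Green's function \eqref{green} built from $T$. The crux is that $\psi\equiv 0$ on $\G$: there $|T(x)|=1$, and the identity $|\zeta|\,|w-\zeta^{*}|=|w-\zeta|$ for $|w|=1$ (apply \eqref{frac} to $a=w$, $b=\zeta$ and use $w^{*}=w$) forces the logarithm in $G_{\G}$ to vanish whenever $|T(x)|=1$. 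Thus $\G$ is, from each side, a level set of $\psi$, and $u=\nabla^{\perp}\psi$ is tangent to it. The main obstacle I anticipate is organizational rather than analytic: because $u$ is genuinely two--valued across $\G$ and unbounded at $\pm1$, the continuity, the jump and the tangency must all be read off the correct one--sided boundary value of $T$, and the potential--theoretic continuity of $\tilde W$ up to $S$ has to be justified even when the support of $\o(\cdot,t)$ approaches $\G$; once these one--sided limits are bookkept, each of (i)--(iv) is routine.
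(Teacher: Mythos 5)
Your proposal is correct, and for (i)--(iii) it is essentially the paper's own argument: the paper likewise changes variables $\eta=T(y)$, pushes the vorticity forward to a density on $\inte D^c$ (bounded thanks to the boundedness of $\det DT^{-1}$ from Proposition \ref{2.2}), proves continuity of the resulting disk-exterior integrals up to the unit circle (splitting the image term at $|\eta^*|=1/2$, which is the content of your ``standard potential theory'' step), then composes with the one-sided extensions of $T$ and $DT$ from Proposition \ref{2.2}, and gets the endpoint blow-up from \eqref{T'} together with the boundedness of the integrals in Lemma \ref{I_est}. Where you genuinely diverge is (iv). The paper never passes through the stream function: it takes $x_n\to x\in\G\setminus\{-1,1\}$, computes the normal component $u(x_n)\cdot\na|T(x_n)|$ directly, uses $DT\,DT^t=\det(DT)\,Id$ (holomorphy) to reduce it to an integral whose integrand carries the factor $(|T(x_n)|^2-1)(1-1/|T(y)|^2)$, and concludes by dominated convergence that this normal component tends to zero. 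Your argument uses the same algebraic identity \eqref{frac} (equivalently $|z-\eta|=|\eta|\,|z-\eta^*|$ for $|z|=1$) but deploys it once, at the level of the potential: the Green-type stream function $\p$ vanishes identically on $\G$ from each side, so the curve is a streamline. This is more conceptual, and it reuses (ii) instead of redoing an integral estimate; the price is two facts you only gesture at and which should be stated to make the proof complete: first, the continuity of $\p$ itself up to each side of $\G$ (a log-potential estimate of the same kind as for $u$, needed to know the one-sided boundary value really is $0$); second, the elementary lemma that if a function and its gradient both extend continuously to one side of the arc and the one-sided boundary value is constant, then the tangential component of the one-sided gradient vanishes (fundamental theorem of calculus along paths approaching the arc). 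With these two points made explicit, your proof of (iv) is a legitimate and arguably cleaner alternative to the paper's computation, which in exchange is entirely self-contained at the level of the velocity formula.
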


\begin{proof} To show that, we now prove that 
$$A(x)\equiv \int_{\R^2} \frac{T(x)-T(y)}{|T(x)-T(y)|^2}\o(y)dy \text{\ and\ } B(x)\equiv \int_{\R^2}\frac{T(x)-T(y)^*}{|T(x)-T(y)^*|^2}\o(y)dy$$ 
are continuous on $\R^2\setminus\G$ as $\o\in L^1\cap L^\infty$. As in the proof of Lemma \ref{I_est}, we change variables, we introduce $f(\y,t)=\o( T^{-1}(\y),t)J(\y)\h_{\{|\y|\geq 1\}}$ and $z=T(x)$. Then we have $\tilde A(z)\equiv A(x)=
\int_{|\y|\geq 1} \frac{z-\y}{|z-\y|^2}f(\y,t)d\y$, which is continuous for $f \in L^1\cap L^\infty$. In the same way we estimated $I_2$ in the proof of Lemma \ref{I_est}, we write
$$B(x)=\int_{|\y|\geq 2} \frac{z-\y^*}{|z-\y^*|^2}f(\y,t)d\y+\int_{1/2\leq |\th|\leq 1} \frac{z-\th}{|z-\th|^2}g(\th,t)d\th\equiv B_1(z)+B_2(z),$$
with $g(\th,t)=\o( T^{-1}(\th^*),t)J(\th^*)/|\th|^4$. As for $A$, we observe that $B_2$ is continuous. For $B_1$, taking a sequence $f_n\in C^\infty_c$ such that $f_n\to f(.,t)$ strongly in $L^1$, we see that $B_{1,n}\equiv \int_{|\y|\geq 2} \frac{z-\y^*}{|z-\y^*|^2}f_n(\y,t)d\y$ is continuous. As $|z-\y^*|\geq 1/2$, we can conclude, after remarking that $\|  B_n -  B_1\|_{L^\infty}\leq 2\|f_n-f\|_{L^1}$, that $B_1$ is continuous.

Moreover, $\tilde A$, $B_1$ and $B_2$ are continuous up to the boundary. As $A(x)=\tilde A(T(x))$ and $B(x)=B_1(T(x))+B_2(T(x))$, with $T$ continuous up to $\G\setminus \{-1;1\}$, with different values on each side of $\G$ (see Proposition \ref{2.2}), we proved i) and ii).

The blowing up at the endpoints is the consequence of the expression of $DT$ (see (\ref{T'})), and the fact that $A(x)$ and $B(x)$ is bounded by Lemma \ref{I_est}.

Finally, to show that the velocity is tangent to the obstacle, we do a simplified, but similar calculation to the one in Lemma \ref{4.6}. Indeed, as $|T(x)|=1$ on the curve $\G$, $\na |T(x)|$ is orthogonal to the curve. According to Proposition \ref{2.2}, $\na |T(x)|$ is continuous up to the curve $\G$ with different values on each side. Let $x\in \G\setminus\{-1;1\}$, then for a sequence $x_n\in \Pi$ which tends to $x$, we can make the same calculation than in the beginning of the proof of Lemma \ref{4.5}, to get:
\begin{eqnarray*}
u(x_n).\na |T(x_n)| &=& u(x_n)^\perp.\na^\perp |T(x_n)|\\
&=& -\dfrac{1}{2\pi} \int_{\R^2} \Bigl(\dfrac{T(x_n)-T(y)}{|T(x_n)-T(y)|^2}-\dfrac{T(x_n)- T(y)^*}{|T(x_n)- T(y)^*|^2}\Bigl)\o(y,t)dy \\
&& \times DT(x_n)DT^t(x_n)\frac{T(x_n)^\perp}{|T(x_n)|} + \a \frac{\na |T(x_n)|}{|T(x_n)|}.\na^\perp |T(x_n)|\\
&=& \dfrac{1}{2\pi|T(x_n)|}\det(DT)(x_n) \int_{\R^2} A_n(y)\o(y,t)dy 
\end{eqnarray*}
with 
\begin{eqnarray*}
A_n(y)&=&\dfrac{T(y).T(x_n)^\perp}{|T(x_n)-T(y)|^2}-\dfrac{ T(y)^*.T(x_n)^\perp}{|T(x_n)- T(y)^*|^2}\\
&=&\dfrac{|T(x_n)- T(y)^*|^2-|T(x_n)-T(y)|^2/|T(y)|^2}{|T(x_n)-T(y)|^2|T(x_n)- T(y)^*|^2}T(y).T(x_n)^\perp\\
&=& \dfrac{(1-1/|T(y)|^2)(|T(x_n)|^2-1)}{|T(x_n)-T(y)|^2|T(x_n)- T(y)^*|^2}T(y).T(x_n)^\perp.
\end{eqnarray*}
If $x_n$ tends to $x$ on one side of the curve, then $|T(x_n)|\to 1$ and $A_n(y)\to 0$. So $A_n \o(.,t)$ tends pointwise to zero, and as the integral is bounded by Remark \ref{remark4.3}, we can conclude by the dominated convergence theorem that $u(x).\na |T(x)|=0$ and that the velocity, on each side, is tangent to the curve.
\end{proof}

Therefore we have a weak $*$ limit for the vorticity, and a strong limit for the velocity. We now study the relation between $\curl u$ and $\o$.

\subsection{Calculation of curl and div of the velocity.}\

We first remark that $\diver u=0$, which is obvious since the velocity is the orthogonal gradient of a function. Indeed  $u=\nabla^\perp \p$ with $\p(x)=\int G_\pi(x,y)\o(y)dy+\frac{1}{2\pi} \log |T(x)|$. 

We now compute the curl of the limit velocity.

Recall that the curve $\G$ goes from $-1$ to $1$. Let $\overrightarrow{\t}=\G'/|\G'|$ the tangent vector of $\G$, $u_{up}$ the limit of $u(\G(s)+\r \overrightarrow{\t}^\perp)$ as $\r\to 0^+$ and $u_{down}$ the limit as $\r\to 0^-$.

\begin{lemma}\label{5.9} There exists a function $g_{\o}$ which depends on $\G$ and $\o$ such that 
$$\curl u=\o + g_{\o}(s) \d_\G,$$
in the sense of distributions.

Moreover $g_\o=(u_{down}-u_{up}).\overrightarrow{\t}$ which corresponds to the jump of the velocity on the curve.
\end{lemma}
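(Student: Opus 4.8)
The plan is to compute $\curl u$ directly against test functions, using that $u$ is a classical solution of the elliptic system away from $\G$ and that its only singular behaviour there is a jump across the arc. Fix $\f\in C_c^\infty(\R^2)$; by the distributional definition of the scalar curl,
$$\langle\curl u,\f\rangle=-\int_{\R^2}u\cdot\na^\perp\f\,dx.$$
Since $\G$ has zero Lebesgue measure and $u$, given by \eqref{uomega}, is smooth on $\Pi=\R^2\setminus\G$ with $\curl u=\o$ there (it is assembled from $K$ and $H$, which solve the systems of Lemma \ref{2.6} and Proposition \ref{2.7}), any contribution beyond $\o$ must be supported on $\G$. Proposition \ref{remark} guarantees that $u$ extends continuously up to each side of $\G\setminus\{-1,1\}$, with one-sided traces $u_{up}$ and $u_{down}$, so the jump is well defined on the interior of the arc.

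Next I would localize around the arc. I remove from $\R^2$ two small disks $B(\pm1,\d)$ about the endpoints and split the remainder into the side $\O_+^\d$ (in the direction $\overrightarrow{\t}^\perp$) and the side $\O_-^\d$ of $\G$; on the smooth $C^2$ interior of the arc the tubular neighbourhood makes this ``up/down'' decomposition unambiguous, even though $\Pi$ itself is connected. On each side $u$ is smooth with $\curl u=\o$, so Green's identity, with the oriented line element $\dss$ fixed in Section 2, reads
$$-\int_{\O_\pm^\d}u\cdot\na^\perp\f\,dx=\int_{\O_\pm^\d}\f\,\o\,dx+\oint_{\pd\O_\pm^\d}\f\,u\cdot\dss.$$
Summing the two sides, the area integrals combine to $\int_{\R^2}\f\,\o\,dx$, the outer-boundary terms vanish because $\f$ is compactly supported, and the two copies of $\G$ are traversed with opposite orientation. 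Since $\overrightarrow{\t}$ is the tangent, the interface terms therefore collapse to $\int_\G\f\,(u_{down}-u_{up})\cdot\overrightarrow{\t}\,ds$, producing a single-layer term $g_\o\,\d_\G$ with $g_\o=(u_{down}-u_{up})\cdot\overrightarrow{\t}$, exactly the asserted formula.

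It remains to check that the excised endpoints contribute nothing as $\d\to0$, and this is the step I expect to be the main obstacle. The integrals over the two circles $\pd B(\pm1,\d)$ are bounded by $\|\f\|_{L^\infty}\int_{\pd B(\pm1,\d)}|u|\,ds$; by Proposition \ref{remark}(iii) the velocity blows up only like $C/\sqrt{|x-1||x+1|}$, i.e. like $\d^{-1/2}$ on a circle of radius $\d$, whose length is $2\pi\d$, so each endpoint integral is $O(\d^{1/2})\to0$. One must also verify that the interface integral converges to $\int_\G\f\,g_\o\,ds$ as $\d\to0$, which follows from the continuity of the one-sided traces of $u$ (Proposition \ref{remark}(ii)) together with the integrability of $g_\o$ near the endpoints. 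The only remaining delicacy is pinning down the orientation conventions so that the sign of $g_\o$ comes out as $(u_{down}-u_{up})\cdot\overrightarrow{\t}$ rather than its negative; once the endpoint contributions are controlled, the identification of $g_\o$ with the tangential jump is immediate from the computation above.
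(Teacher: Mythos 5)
Your proof is correct in substance but follows a genuinely different route from the paper's. The paper proceeds in two steps: it first shows that $\curl u-\o$ is concentrated on $\G$ by an exact global computation, changing variables through the conformal map $T$ and working in polar coordinates on the exterior of the disk (this is where the half-lines from $T(y)$ and $T(y)^*$ and the tangency angles appear); it then identifies the density by subtracting the full-plane Biot--Savart field $v=K[\o]$, so that $w=u-v$ is curl-free off $\G$, and performing a two-sided Green identity only \emph{locally}, in a small neighborhood $O$ of an interior point of the arc, where $O\setminus\G$ really does consist of two components $O_{up}$ and $O_{down}$. You collapse both steps into one global two-sided integration by parts applied directly to $u$, with the endpoints excised and their contribution controlled by the blow-up rate of Proposition \ref{remark}(iii). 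This is more elementary (no conformal computation, no auxiliary field $v$), but it relies on exactly the quantitative endpoint information the paper's strategy is designed to avoid: the paper's first step is exact and its second step is purely local in the interior of the arc, so it never needs an estimate at the endpoints.

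Two steps in your write-up need repair. First, the splitting: the set $\R^2\setminus\bigl(\G\cup B(1,\d)\cup B(-1,\d)\bigr)$ is \emph{connected} (one can pass from one side of the arc to the other by going around an endpoint disk on its far side, avoiding the single point where the arc meets the circle), so there is no decomposition of the remainder into two sides $\O_+^\d$ and $\O_-^\d$; the tubular neighborhood only labels points near the arc. The standard fix is to introduce auxiliary cuts, e.g. extend $\G$ beyond each endpoint by disjoint curves going to infinity so that the extended curve separates the plane into two Lipschitz domains; apply Green's identity on each, and note that the boundary terms along the auxiliary cuts cancel in pairs because $u$ is continuous across them (they lie in $\Pi$, away from the excised disks), while the terms along the two sides of $\G$ produce $\int_\G\f\,(u_{down}-u_{up})\cdot\overrightarrow{\t}\,ds$. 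Equivalently one proves a Green identity for the slit domain; the paper sidesteps this entirely because its Green identity is local. Second, you invoke Lemma \ref{2.6} and Proposition \ref{2.7} for $\curl u=\o$ in $\Pi$, but those are stated for the smooth exterior domain $\Pi_0$ and $f\in C_c^\infty$; you should note that the same kernel computation with $T$ in place of $T_0$ gives $\curl u=\o$ in the sense of distributions on $\Pi$ for $\o\in L^1\cap L^\infty$ (the image term is curl-free in $\Pi$ since $T(y)^*$ lies inside the unit disk), and interior elliptic regularity then gives $u\in W^{1,p}_{\loc}(\Pi)$, which is what licenses the Green identities on the approximating subdomains. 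With these repairs your argument is complete, and your endpoint estimate (circle length $2\pi\d$ times blow-up $O(\d^{-1/2})$, giving $O(\d^{1/2})$) is exactly what makes the excision work.
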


\begin{proof} This proof is divided in two part. The first step consists to show that $\curl u-\o$ is concentrated on the curve $\G$, and we find in the second step the expression of $g_{\o}$.

We begin with $\curl H$. We remember that  $H=1/2\pi\ \nabla^\perp \log(T(x))$. Let $\f\in C^\infty_0(\R^2)$, we write:
\begin{eqnarray*}
\int_{\Pi} \curl(H)\f &=& -\int_{\Pi} H.\nabla^\perp \f \\
&=& -\frac{1}{2\pi}\int_{\Pi} \frac{T(x)}{|T(x)|^2}DT(x)\nabla \f(x)dx\\
&=& -\frac{1}{2\pi}\int_{D^c} \frac{z}{|z|^2}DT(T^{-1}(z))\nabla \f(T^{-1}(z))|\det\ T^{-1}|(z) dz
\end{eqnarray*}
where we changed variables $z=T(x)$. Since $T$ is holomorphic, we remark that 
$$\na(\f \circ T^{-1})(z)=DT^{-1^t}(z) \nabla\f(T^{-1}(z))=|\det\ T^{-1}|(z) DT(T^{-1}(z))\nabla \f(T^{-1}(z)).$$ We use the polar variables ($z=re^{i\th}$) to find
\begin{eqnarray*}
\int_{\Pi} \curl(H)\f &=& -\frac{1}{2\pi}\int_0^{2\pi}\int_1^{\infty} \na(\f \circ T^{-1})(z).\frac{z}{r}drd\th\\
&=& -\frac{1}{2\pi}\int_0^{2\pi}\int_1^{\infty} \frac{d}{dr}\Bigl(\f \circ T^{-1}(re^{i\th})\Bigl)drd\th\\
&=&  \frac{1}{2\pi}\int_0^{2\pi}\f \circ T^{-1}(\cos(\th),\sin(\th))d\th.
\end{eqnarray*}
The last integral can be written as an integral of $\f$ on the curve $\G$ with a certain weight.
 
We make the same calculation with the explicit formula of the velocity, and if we consider the translations $\t_1:z\mapsto z+T(y)$ and $\t_2:z\mapsto z+T(y)^*$, we obtain:

\begin{eqnarray*}
\int \curl(u).\f &=& -\frac{1}{2\pi}\int_{\R^2}\bigl(\int_{D^c-T(y)} \na(\f \circ T^{-1}\circ \t_1)(z).\frac{z}{|z|^2}dz\bigl) \o(y)dy   \\
&& +\frac{1}{2\pi}\int_{\R^2}\bigl(\int_{D^c-T(y)^*} \na(\f \circ T^{-1}\circ \t_2)(z).\frac{z}{|z|^2} dz\bigl) \o(y)dy  +\a \int \curl(H).\f \\
&=& \int_{\R^2}\f(y)\o(y)dy +\frac{1}{2\pi}\int_{\R^2}\int_{\th_0(y)}^{\th_1(y)}[\f\circ T^{-1}(A_{1,y}(\th))-\f\circ T^{-1}(A_{2,y}(\th))]d\th \o(y)dy \\
&& -\frac{1}{2\pi}\int_{\R^2}\int_0^{2\pi}\f\circ T^{-1}(A_{3,y}(\th)) \o(y)dy + \frac{\a}{2\pi}\int_0^{2\pi}\f \circ T^{-1}(\cos(\th),\sin(\th))d\th,
\end{eqnarray*}
 with $A_1$, $A_2$ and $A_3$ constructed like this: we consider the half-line starting at $T(y)$ having an angle $\th$ with the abscissa axis. There exist two angles $\th_0<\th_1$ such that the half-line is tangent to the unit circle. If we choose $\th\in(\th_0,\th_1)$, then the half-line intersects the circle in two points  $A_2$ and further $A_1$. For $A_3$ we do the same thing with the half-line starting at $T(y)^*$. In this case, we obtain each time an intersection with the unit circle.

Now, the difficulty is the change of variables. Indeed, as $T^{-1}(A_{i,y}(\th))\in \G$, we should change the variable $s=T^{-1} (A_{i,y}(\th))$ and we would obtain $\int_{\R^2}\int_{\G}\f(s)f_{i,y}(s)ds$, but this calculation is too complicated. In fact, we have just proved that $\curl u=\o+g_\o(s)\d_\G$, and we will directly find the expression of $g_\o$. For that, we consider the solution $v$ of the Green problem without obstacle. That is,
$$\diver v = 0 \text{\ and\ }\curl v =\o \text{\ in\ }\R^2.$$
The explicit formula is $v(x)=\int_{\R^2}\frac{(x-y)^\perp}{|x-y|^2}\o(y)dy$.
We denote by $w= u-v$. Then $\curl w=g_\o \d_\G$.

We now prove that $g_\o= (w_{down}-w_{up}).\overrightarrow{\t}$, with $w_{up}$ and $w_{down}$ defined in the same manner as $u_{up}$ and $u_{down}$. 

So, for $x\in\G\setminus\{-1;1\}$, there exists a small neighborhood $O$ of $x$, such $O\setminus \G$ is the union of two connected domains: $O_{up}$ and $O_{down}$. On the one side, $$\int_O \curl w\f=\int_{\G} \f(s)g_\o(s)ds.$$
On the other side, 
\begin{eqnarray*}
\int_O \curl w\ \f &=& -\int_O w. \na^\perp\f=-\int_{O_{up}} w.\na^\perp\f-\int_{O_{down}} w.\na^\perp\f \\
&=& \int_{O_{up}} \curl w\ \f - \int_{\pd O_{up}}\f w. \overrightarrow{\t} +\int_{O_{down}} \curl w\ \f - \int_{\pd O_{down}} \f w  .\overrightarrow{\t} \\
&=& - \int_{\G} \f w_{up} . \overrightarrow{\t} +  \int_{\G} \f w_{down} . \overrightarrow{\t}
\end{eqnarray*}
because $w=u-v$ is continuous on $\R^2\setminus \G$.

As we want, we have $g_\o= (w_{down}-w_{up}).\overrightarrow{\t}$. Moreover, adding the regular part $v$, we have 
\begin{equation}
\label{g_o}
g_\o=\curl w= (u_{down}-u_{up}).\overrightarrow{\t}.
\end{equation}

Therefore, we obtain:
$$\int \f\curl(u) = \int_{\R^2} \f(y)\o(y)dy + \int_{\G}\f(s) g_\o(s). { ds} ,$$
with $g_\o$, bounded outside the endpoints, and equivalent at the endpoints to $$\frac{1}{\pi}\frac{A(\pm 1)}{\sqrt{|s-1||s+1|}}$$ with 
$$A(\pm 1)=C_{\pm 1} \Bigl|\int_{\R^2} \Bigl(\frac{(T(\pm 1)-T(y))^\perp}{|T(\pm 1)-T(y)|^2}-\frac{(T(\pm 1)- T(y)^*)^\perp}{|T(\pm 1)- T(y)^*|^2}\Bigl)\o(y,t)dy + \a T(\pm 1)^\perp \Bigl| $$
which is bounded. Indeed, we can prove that $g_\o$ is continuous as we prove that $u$ is continuous in Proposition \ref{remark}.
\end{proof} 

Getting a simplification is really hard, and we remark that we can not obtain a result like $\curl u=\o+g(s)\d_\G$. Even in the simpler case of the segment, the calculation of $g_\o$ does not give a good result. However, we can explicit the calculation of $\curl(H)$ in the case where the curve $\G$ is the segment $[-1,1]$: 
 \newline$T^{-1}(\cos(\th),\sin(\th))= (\cos(\th),0)$ and we change the variable $\y=\cos\ \th$ to have $$\int \curl(H)\f = 2.\frac{1}{2\pi}\int_0^{\pi}\f(cos(\th),0)d\th = \frac{1}{\pi}\int_{-1}^1\frac{\f(\y,0)}{\sqrt{1-\y^2}}d\y.$$

Moreover, we remark that $g_{\o}$, like the velocity, blows up at the endpoints of the curve $\G$ as the inverse of the square root of the distance.
 
\subsection{The asymptotic vorticity equation in $\R^2$} \ 

We begin by observing that the sequence $\{\F^\e\o^\e\}$ is bounded in $L^\infty([0,\T],L^4)$, then, passing to a subsequence if necessary, we have $$\F^\e\o^\e\rightharpoonup \o\text{, weak-$*$ in }L^\infty([0,\T],L^4).$$ 
We already have a limit velocity.

The purpose of this section is to prove that $u$ and $\o$ verify, in an appropriate sense, the system:
\begin{equation}
\left\lbrace \begin{aligned}
\label{tour_equa}
&\o_t+u.\na\o=0, & \text{ in }\R^2\times(0,\infty) \\
& \diver u=0 \text{ and }\curl u=\o+g_\o(s)\d_\G, &\text{ in }\R^2\times[0,\infty) \\
& |u|\to 0, &\text{ as }|x|\to \infty \\
& \o(x,0)=\o_0(x), &\text{ in }\R^2.
\end{aligned} \right .
\end{equation}
where $\d_\G$ is the Dirac function along the curve and $g_\o$ is given in (\ref{g_o}).

\begin{definition} The pair $(u,\o)$ is a weak solution of the previous system if
\begin{itemize}
\item[(a)] for any test function $\f\in C^\infty_c([0,\infty)\times\R^2)$ we have 
$$\int_0^\infty\int_{\R^2}\f_t\o dxdt +\int_0^\infty \int_{\R^2}\na\f.u\o dxdt+\int_{\R^2}\f(x,0)\o_0(x)dx=0,$$
\item[(b)] we have $\diver u=0$ and $\curl u=\o+g_\o\d_\G$ in the sense of distributions of $\R^2$, with $|u|\to 0$ at infinity.
\end{itemize}
\end{definition}

\begin{theorem}\label{5.11} The pair $(u,\o)$ obtained at the beginning of this subsection is a weak solution of the previous system.
\end{theorem}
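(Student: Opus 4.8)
The plan is to verify the two conditions of the definition separately, and to observe that condition (b) has in fact already been secured by the preceding results. The identity $\diver u=0$ was noted at the start of the present subsection (since $u=\na^\perp\p$), the formula $\curl u=\o+g_\o\d_\G$ is exactly the content of Lemma \ref{5.9}, and the decay $|u|\to0$ as $|x|\to\infty$ follows from the explicit expression \eqref{uomega} together with the far-field behavior of $DT$, $K$ and $H$ recorded in Section 2. Hence the genuine task is the weak vorticity equation (a), which I would obtain by passing to the limit in the corresponding identity at fixed $\e$.

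First I would write the $\e$-level weak formulation. Starting from $\pd_t\o^\e+u^\e.\na\o^\e=0$ in $\Pi_\e$ and using the identity established in the proof of Lemma \ref{4.6},
$$\F^\e\pd_t\o^\e=-\diver(\F^\e u^\e\o^\e)+\o^\e u^\e.\na\F^\e,$$
I would pair against a test function $\f\in C^\infty_c([0,\infty)\times\R^2)$ and integrate by parts in space and time. Because $\F^\e$ is independent of $t$ and $\F^\e\o_0=\o_0$ for $\e$ small (the support of $\o_0$ avoids the thin strip where $\F^\e\neq1$), this produces
$$\int_0^\infty\int_{\R^2}\f_t\,\F^\e\o^\e+\int_0^\infty\int_{\R^2}\na\f.(\F^\e u^\e\o^\e)+\int_{\R^2}\f(x,0)\o_0=-\int_0^\infty\int_{\R^2}\f\,\o^\e u^\e.\na\F^\e.$$
The easy terms pass to the limit at once: the first tends to $\int\!\int\f_t\o$ by the weak-$*$ convergence $\F^\e\o^\e\rightharpoonup\o$ (Proposition \ref{5.2}), the third is independent of $\e$, and the right-hand side is bounded by $\|\f\|_{L^\infty}\|\o_0\|_{L^\infty}\int_0^T\|u^\e.\na\F^\e\|_{L^1}\,dt$, which vanishes by Lemma \ref{4.5}.

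The main obstacle is the nonlinear term $\int\!\int\na\f.(\F^\e u^\e\o^\e)$, and the delicate point is that neither factor converges strongly alone. I would split $\F^\e u^\e\o^\e=(\F^\e u^\e)(\F^\e\o^\e)+\F^\e u^\e\o^\e(1-\F^\e)$. For the remainder, $\supp(1-\F^\e)$ has Lebesgue measure $O(\e)$ by Lemma \ref{4.4}, $\o^\e$ is uniformly bounded, and $\F^\e u^\e$ is bounded in $L^2$ on the fixed bounded support of $\o^\e$ by Theorem \ref{4.2} and Remark \ref{3.2}; Cauchy--Schwarz then makes this piece $O(\e^{1/2})$, so it disappears. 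For the principal piece I would invoke a strong--weak product argument: by Theorem \ref{5.6}, $\F^\e u^\e\to u$ strongly in $L^2_{\loc}$, hence $\na\f.\F^\e u^\e\to\na\f.u$ strongly in $L^2([0,T]\times K)$ for $K\supset\supp\f$, while $\F^\e\o^\e\rightharpoonup\o$ weakly in $L^2([0,T]\times K)$ (a consequence of the weak-$*$ convergence of Proposition \ref{5.2} on a bounded domain).

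Since the product of a strongly $L^2$-convergent sequence with a weakly $L^2$-convergent one converges, writing $\int(\na\f.\F^\e u^\e)(\F^\e\o^\e)-\int(\na\f.u)\o=\int(\na\f.(\F^\e u^\e-u))\F^\e\o^\e+\int(\na\f.u)(\F^\e\o^\e-\o)$ and bounding the first difference by $\|\na\f.(\F^\e u^\e-u)\|_{L^2}\|\F^\e\o^\e\|_{L^2}\to0$ and the second by weak convergence, I would obtain $\int\!\int\na\f.(\F^\e u^\e)(\F^\e\o^\e)\to\int\!\int\na\f.u\,\o$. Collecting all the limits yields precisely identity (a), which together with (b) shows that $(u,\o)$ is a weak solution. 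The heart of the argument is thus the pairing of the strong velocity limit (Theorem \ref{5.6}) against the weak vorticity limit (Proposition \ref{5.2}), the cutoff error and the term $u^\e.\na\F^\e$ being dispatched respectively by the measure estimate of Lemma \ref{4.4} and the tangency estimate of Lemma \ref{4.5}.
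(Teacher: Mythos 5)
Your proposal is correct and follows essentially the same route as the paper: condition (b) is delegated to Lemma \ref{5.9} and the far-field estimates, while (a) is obtained from an $\e$-level weak formulation in which the term $u^\e.\na\F^\e$ is killed by Lemma \ref{4.5} and the nonlinear term passes to the limit by pairing the strong convergence of $\F^\e u^\e$ (Theorem \ref{5.6}) against the weak-$*$ convergence of $\F^\e\o^\e$ (Proposition \ref{5.2}). The only (harmless) difference is bookkeeping: the paper tests the equation against $\f(\F^\e)^2$ so that the nonlinear term appears directly as $(\F^\e u^\e)(\F^\e\o^\e)$, whereas you test against $\f\F^\e$ and then remove the single-cutoff discrepancy with the measure bound of Lemma \ref{4.4} and Cauchy--Schwarz, an extra but valid step.
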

\begin{proof}
The second point of the definition is directly verified by the previous section and by the estimate of Subsection \ref{biot} about the far-field behavior. Indeed, the velocity $u$ verifies $|u|\to 0$ at infinity, thanks to the explicit expressions for $K[\o]$ and $H$, using the uniform compact support of $\o$.

Next, we introduce an operator $I_\e$, which for a function $\f\in C^\infty_0([0,\infty)\times \R^2)$ gives:
$$I_\e[\f]\equiv \int_0^\infty\int_{\R^2} \f_t(\F^\e)^2\o^\e dxdt+\int_0^\infty\int_{\R^2} \na\f.(\F^\e u^\e)(\F^\e\o^\e) dxdt.$$
To prove that $(u,\o)$ is a weak solution, we will show that
\begin{itemize} 
\item[(i)] $I_\e[\f]+\int_{\R^2}\f(x,0)\o_0(x)dx\to 0$ as $\e\to 0$
\item[(ii)] $I_\e[\f]\to \int_0^\infty\int_{\R^2}\f_t\o dxdt +\int_0^\infty\int_{\R^2}\na\f.u\o dxdt$ as $\e\to 0$.
\end{itemize}
Clearly these two steps complete the proof.

We begin by showing (i). As $u^\e$ and $\o^\e$ verify  $(\ref{E_e})$, it can be easily seen that
$$\int_0^\infty\int_{\R^2} \f_t(\F^\e)^2\o^\e dxdt =-\int_0^\infty\int_{\R^2} \na(\f (\F^\e)^2).u^\e\o^\e dxdt-\int_{\R^2} \f(x,0)(\F^\e)^2(x)\o_0(x) dx$$
Thus we compute
$$ I_\e[\f] = -2\int_0^\infty\int_{\R^2} \f\na\F^\e. u^\e(\F^\e\o^\e) dxdt-\int_{\R^2} \f(x,0)(\F^\e)^2(x)\o_0(x) dx $$

We have:
$$\Bigl|I_\e[\f]+\int_{\R^2} \f(x,0)(\F^\e)^2(x)\o_0(x) dx \Bigl|\leq 2 \|\F^\e\o^\e\|_{L^\infty (L^\infty)}\|\f\|_{L^1(L^\infty)}\| u^\e.\na\F^\e\|_{L^\infty(L^1)}\to 0,$$
as $\e\to 0$ by Lemma \ref{4.5}. This shows (i) for all $\e$ sufficiently small such that $(\F^\e)^2(x)\o_0=\o_0$ since the support of $\o_0$ does not intersect the curve.

For (ii), the linear term presents no difficulty. The second term consists of the weak-strong pair vorticity-velocity:
\begin{eqnarray*}
\Bigl|\int\int\na\f.(\F^\e u^\e)(\F^\e\o^\e)-\int\int\na\f. u\o\Bigl| &\leq& \Bigl|\int\int\na\f.(\F^\e u^\e-u)(\F^\e\o^\e)\Bigl| \\
&&+\Bigl|\int\int\na\f.u(\F^\e\o^\e-\o)\Bigl|.
\end{eqnarray*}
$\F^\e u^\e\to u$ strongly in $L^2([0,\T],L^2_{\loc}(\R^2))$ thanks to Theorem \ref{5.6}. So the first term tends to zero because $\F^\e\o^\e$ is bounded in $L^\infty([0,\infty),L^2(\R^2))$. In the same way, the second term tends to zero because $\F^\e\o^\e\rightharpoonup \o\text{ weak-$*$ in }L^\infty([0,\T],L^4(\R^2))$ and $u\in L^\infty([0,\T],L^2_{\loc}(\R^2))$.
\end{proof}

\subsection{The asymptotic velocity equation in $\R^2$} \ 

As the function $u$ is bounded in $L^2$, we can write the vorticity equation more simply than in \cite{ift_lop}. The main calculation of this subsection can be found in \cite{ift_lop}. 

We begin by introducing $v(x)=\int K(x-y)\o(y)dy$ with $K(x)=\frac{1}{2\pi}\frac{x^\perp}{|x|^2}$, the solution without obstacle of
\begin{equation*}
\left\lbrace\begin{aligned}
\diver v &=0 &\text{ on } \R^2, \\
\curl v &=\o &\text{ on } \R^2, \\
\lim_{|x|\to\infty}|v|&=0.
\end{aligned}\right.
\end{equation*}

This velocity is bounded, and we denote the perturbation by $w=u-v$, which is bounded in $L^p_{\loc}$ for $p<4$, and it verifies 
\begin{equation*}
\left\lbrace\begin{aligned}
\diver w &=0 &\text{ on } \R^2, \\
\curl w &=g_\o(s)\d_\G &\text{ on } \R^2, \\
\lim_{|x|\to\infty}|w|&=0.
\end{aligned}\right.
\end{equation*}

We now prove that $v$ verifies the following equation:

\begin{equation}
\label{vit_equa}
\begin{cases}
v_t+v.\na v+ v.\na w + w.\na v - v(s)^\perp\tilde g_v(s)\d_\G=-\na p, & \text{ in }\R^2\times(0,\infty)  \\
\diver v=0, & \text{ in }\R^2\times(0,\infty) \\
w(x)=\frac{1}{2\pi} \int_\G \frac{(x-s)^\perp}{|x-s|^2}\tilde g_v(s). {\bf ds}, & \text{ in }\R^2\times(0,\infty)  \\
v(x,0)=K[\o_0], & \text{ in }\R^2.
\end{cases}
\end{equation}
with $\tilde g_v=g_{\curl v}$.

In order to prove the equivalence of (\ref{tour_equa}) and (\ref{vit_equa}) it is sufficient to show that
\begin{equation}
\label{equiv}
\curl[v.\na w + w.\na v - v(s)^\perp\tilde g_v(s)\d_\G]=\diver(\o w)
\end{equation}
for all divergence free fields $v\in W^{1,p}_{\loc}$, with some $p>2$. Indeed, if (\ref{equiv}) holds, then we get for $\o=\curl v$ 
\begin{eqnarray*}
0 &=& -\curl \na p=\curl[v_t+ v.\na v + v.\na w + w.\na v - v(s)^\perp\tilde g_v(s)\d_\G] \\
&=& \o_t + v.\na \o + w.\na \o= \o_t + u.\na \o=0
\end{eqnarray*}
so relation (\ref{tour_equa}) holds true. And vice versa, if (\ref{tour_equa}) holds then we deduce that the left hand side of (\ref{vit_equa}) has zero curl so it must be a gradient.

We now prove (\ref{equiv}). As $ W^{1,p}_{\loc} \subset \mathcal{C}^0$, $v(s)$ is well defined. Next, it suffices to prove the equality for smooth $v$, since we can pass to the limit on a subsequence of smooth approximations of $v$ which converges strongly in $W^{1,p}_{\loc}$ and $\mathcal{C}^0$. Now, it is trivial to check that, for a $2\times 2$ matrix $A$ with distribution coefficients, we have
$$\curl \diver A =\diver \begin{pmatrix} \curl C_1 \\ \curl C_2 \end{pmatrix}$$
where $C_i$ denotes the $i$-th column of $A$.
For smooth $v$, we deduce
\begin{eqnarray*}
\curl[v.\na w + w.\na v] &=& \curl \diver(v\otimes w+w \otimes v)\\
&=& \diver\begin{pmatrix} \curl(vw_1)+\curl(wv_1) \\ \curl(vw_2)+\curl(wv_2) \end{pmatrix} \\
&=& \diver(w\ \curl v+v.\na^\perp w+ v\ \curl w+w.\na^\perp v).
\end{eqnarray*}

It is a simple computation to check that
$$ \diver(v.\na^\perp w+w.\na^\perp v) = v.\na^\perp \diver w+ w.\na^\perp \diver v + \curl v\ \diver w+ \curl w\ \diver v.$$

Taking into account that we have free divergence fields, we can finish by writing
\begin{eqnarray*}
\curl[v.\na w + w.\na v] &=& \diver(w\ \curl v +v \tilde g_v(s)\d_\G) \\
&=& \diver(w\ \curl v) + \curl[v(s)^\perp \tilde g_v(s)\d_\G].
\end{eqnarray*}
which proves (\ref{equiv}).

Now, we write a formulation for the velocity $u$, by replacing $v$ by $u-w$ to obtain in $\R^2$
$$u_t + u.\na u = -\na p+ w_t+w.\na w + v(s)^\perp \tilde g_v(s) \d_\G.$$

However, since $\curl w=0$, we can remark that $\curl[w_t+w.\na w+ v(s)^\perp \tilde g_v(s) \d_\G]=0$ in $\R^2\setminus \G$.

\subsection{Formulation on $\R^2\setminus \G$} \ 

We can obtain directly an equation for $u$ on $\R^2\setminus \G$ by
passing to the limit $\e\to 0$. We multiply the velocity equation
(\ref{E_e}) by some divergence-free test vector field $\f\in
C_c^\infty(\R^+\times(\R^2\setminus \G))$ and assume that $\e$ is small
enough such that the support of $\f$ is contained in $\Pi_\e$ and do
not intersected the support of $\na\F^\e$. After integration,
$$ \int_0^\infty\int u^\e.\pd_t\f + \int_{\R^2\setminus \G} u^\e(0,.).\f(0,.) + \int_0^\infty\int (u^\e\otimes u^\e).\na \f =0,$$
which easily pass to the limit since $u^\e\to u$ strongly in
$L^2_{\loc}$ by Theorem \ref{5.6}. Indeed, we can prove easily that $u^\e_0\to u_0$ in $L^2_{\loc}(\R^2)$ thanks to the proof of Theorem \ref{5.6}. Therefore, the above relation holds
true with $u$ instead of $u^\e$ and this is the formulation in the
sense of distributions of the Euler equation in $\R^2\setminus \G$:

\begin{equation}
\left\lbrace \begin{aligned}
&u_t+u.\na u=-\na p, & \text{\ in\ } \R^2\setminus \G\times(0,\infty) \\
&\diver u=0 & \text{\ in\ } \R^2\setminus \G\times[0,\infty) \\
&u.\hat{n}=0 &\text{\ on\ } \G\times[0,\infty) \\
& |u|\to 0, &\text{ as }|x|\to \infty \\
&u(x,0)=F(\o_0) & \text{\ in\ } \R^2\setminus \G 
\end{aligned} \right .
\end{equation}
where $F$ is the formula from Theorem \ref{5.6} expressing explicitly
the velocity in terms of vorticity and circulation.

\section*{Acknowledgments}

I want to thank some researchers for the help offered with the subject of complex analysis, which allows the generalization to any curve $\G$. In particular Etienne Ghys and  Alexei Glutsyuk who searched with me a proof using quasi-conformal mappings for the straightening up of the curve by a holomorphism, $C^\infty$ up to the boundary. I would like to give many thanks to Xavier Buff for suggesting the use of $\tilde T_\pm$. 

I also want to thank M.C. Lopes Filho and H.J. Nussenzveig Lopes for several interesting and helpful discussions.

\section*{List of notations}

\subsection*{Domains:}\

$D\equiv B(0,1)$ the unit disk.

$S\equiv \pd D$.

$\G$ is a Jordan arc (see Proposition \ref{2.2}).

$\Pi \equiv \R^2\setminus \G$.

$\O_0$ is a bounded, open, connected, simply connected subset of the plane.

$\G_0\equiv \pd\O_0$ is a $C^\infty$ Jordan curve.

$\Pi_0\equiv \overline{\R^2\setminus \O_0}$

$\O_\e$ is a family of a domain, verifying the same properties of $\O_0$, such as $\O_\e\to \G$ as $\e\to 0$.

$\G_\e\equiv \pd\O_\e$ and $\Pi_\e\equiv \overline{\R^2\setminus \O_\e}$.

\subsection*{Functions:}\

$\o_0$ is the initial vorticity ($C^\infty_c(\Pi)$).

$\g$ is the circulation of $u_0^\e$ on $\G_\e$ (see Introduction).

$(u^\e,\o^\e)$ is the solution of the Euler equations on $\Pi_\e$.

$T$ is a biholomorphism between $\Pi$ and $\inte\ D^c$ (see Proposition \ref{2.2}).

$T_0$ is a biholomorphism between $\Pi_0$ and $\inte\ D^c$.

$T_\e$ is a biholomorphism between $\Pi_\e$ and $\inte\ D^c$ (see Assumption \ref{3.1}).

$K^\e$ and $H^\e$ are given in \eqref{K} and \eqref{H}

$K^\e[\o^\e](x)\equiv \int_{\Pi_\e} K^\e(x,y) \o^\e(y)dy$.

$\F^\e$ is a cutoff function in a small $\e$-neighborhood of $\O_\e$ (see Subsection \ref{cutoff}).

\end{document}